\DeclareRobustCommand{\SkipTocEntry}[5]{}
\newcommand{\gc}{i_{g_{\frac{1}{c}}}}
\newcommand{\qum}[1]{\quad\mbox{#1}}
\newtheorem{theorem}{Theorem}
\newtheorem{definition}{Definition}[section]
\newtheorem{lemma}[definition]{Lemma}
\newtheorem{proposition}[definition]
{Proposition}
\newtheorem{remark}[definition]{Remark}
\renewcommand{\i}{\mathrm{i}}
\DeclareMathOperator{\diag}{diag}
\newcommand{\Rn}{\mathbb{R}^n}
\newcommand{\R}{\mathbb{R}}
\renewcommand{\S}{\mathbb{S}}
\newcommand{\D}{\mathrm{d}}
\newcommand{\F}{\mathcal{F}}
\newcommand{\Bdl}{B_{n}(\omega_0,\delta)}
\newcommand{\om}{\omega}
\newcommand{\Nt}{\mathbb{N}}
\newcommand{\sn}{\mathbb{S}^{n-1}}
\newcommand{\PD}{\partial}
\newcommand{\w}{\mathrm{w}}
\newcommand{\Rno}{\mathbb{R}^{1+n}}
\title[The Momentum Light Ray Transform
]{The Momentum Light Ray Transform}
\author[Bhattacharyya]{Sombuddha Bhattacharyya $^\ast$}
\address {$^{\ast}$ Department of Mathematics, Indian Institute of Science Education and Research, Bhopal.
	\newline
	E-mail:{\tt\  sombuddha@iiserb.ac.in}}
\author[Mondal]{Tuhin Mondal$^\ast$}
\address {$^{\ast}$ Department of Mathematics, Indian Institute of Science Education and Research, Bhopal.
	\newline
	E-mail:{\tt\  tuhin22@iiserb.ac.in}}
\author[Sahoo]{Suman Kumar Sahoo $^\dagger$ }
\address{ $^\dagger$ Department of Mathematics, Indian Institute of Technology Bombay.}
\email{suman@math.iitb.ac.in}
\date{}
\begin{document}

\begin{abstract}
In this article we study \emph{Momentum Light Ray Transform (MLRT)} on symmetric tensor fields. MLRT is an integral transform in time-space domain ($(t,x)\in \mathbb{R}^{1+n}$), which integrates a scalar function or a tensor field along the light rays with a polynomial type weight. 
We explore necessary and sufficient conditions for injectivity of MLRT, over general order tensors on space dimension $\geq 2$, from full and restricted measurements.
Furthermore, we develop an inversion algorithm for MLRTs in the restricted measurement setting. 
To prove the results, we use tools from tensor tomography, geometry, and analysis.
\end{abstract}
\subjclass{Primary 44A12, 65R32}

\keywords{Integral geometry, tensor tomography, momentum light ray transform, Fourier transform}

\maketitle

\section{Introduction and main results}
The study of integral geometric inverse problems dates back to the seminal work of J. Radon \cite{Radon}, who introduced the \emph{Radon transform} that integrates a function over hyperplanes in $\mathbb{R}^n$. In two dimensions, the Radon transform is equivalent to the X-ray transform, which integrates along straight lines. These transforms have broad applications in fields like medical imaging and geophysics. 
The central questions of kernel description, inversion, and range characterization are well-studied for these classical transforms \cite{Sharafutdinov_book, Paternain_Salo_Uhlmann_2023}. In contrast, the \emph{Light Ray Transform (LRT)}, which integrates a function along light rays in $\mathbb{R}^{1+n}$, has received less attention. 
The light rays (unit speed) are lines parallel to the direction $(1, \omega)$ for a unit vector $\omega \in \mathbb{R}^n$.  This transform was first introduced in the works \cite{stefanov1989inverse,ramm1991inverse} to the best of our knowledge. 
In this article, we study \emph{Momentum Light Ray Transform (MLRT)} acting on symmetric tensor fields, which is a generalization of the LRT by taking higher order symmetric tensors.
MLRT as well generalizes the \emph{Momentum Ray Transform (MRT)} by restricting directions of integrations. To this end, we begin by defining the necessary notation to introduce these transforms.      

Let $n\geq 2$ and $(t=x_0, x_1, \cdots, x_n)=(t,x)\in \R\times\Rn$. Let $S^m$ be the complex vector space of symmetric tensor fields of rank $m$. Let $C_{c}^{\infty}(\mathbb{R}^{1+n};S^m)$ be the space of all compactly supported, smooth symmetric $m$-tensor fields on $\mathbb{R}^{1+n}$ and $\mathcal{S}(\Rno; S^m)$ be the Schwartz space of symmetric $m$ tensor fields in $\Rno$.  For any $f^{(m)} \in C_{c}^{\infty}(\mathbb{R}^{1+n};S^m)$, the components of $f^{(m)}$ are defined as 
	\[
	f^{(m)}_{i_1\dots i_m}(t,x)=f^{(m)}(t,x)(e_{i_1},\dots,e_{i_m}), \quad m\ge 1,
	\]
	where $\{e_i:i=0,1,\dots,n\}$ is the standard Euclidean basis vectors of $\R^{1+n}$.  When $m=0$, we consider $f^{(0)}$ to be a scalar  function.

    \begin{definition}\cite{ramm1991inverse}\label{def_lrt_1}
		The \emph{Light Ray Transform (LRT)} of $f^{(m)}$ is a linear mapping 
		\[ \tilde{L}^m: C_{c}^{\infty}(\mathbb{R}^{1+n};S^m) \longrightarrow C^{\infty}(\mathbb{R}^{1+n}\times\sn) \]
		defined for each $(t,x)\in \Rno, \omega \in \sn$ as
		\begin{equation}\label{def_lrt}
			\tilde{L}^{m}f^{(m)}((t,x),\omega)\coloneqq \sum_{i_{1},\dots,i_{m}=0}^{n}\Tilde{\omega}_{i_{1}}\dots \Tilde{\omega}_{i_{m}}\int_{\mathbb{R}}f_{i_{1}\dots i_{m}}^{(m)}((t,x)+s\Tilde{\omega})\D s , \quad \mbox{where $\Tilde{\omega}=(1,\omega)$. }
		\end{equation}
	\end{definition}
	One can extend  $\tilde{L}^m$, for all $\om \in \R^{n}\setminus\{0\}$, denoted by $L^m$ in the following way 
	\begin{equation}\label{def_extended_lrt}
		L^{m}f^{(m)}((t,x),\omega)\coloneqq \sum_{i_{1},\dots,i_{m}=0}^{n}\Tilde{\omega}_{i_{1}}\dots \Tilde{\omega}_{i_{m}}\int_{\mathbb{R}}f_{i_{1}\dots i_{m}}^{(m)}((t,x)+s\Tilde{\omega})\D s , \quad\mbox{where $\omega\in \Rn\setminus\{0\}$}.
	\end{equation}
	We now define the momentum light ray transform, which is the main objective of this paper.
	\begin{definition}
	For each non-negative integer $k$, we define the $k$-th \emph{Momentum Light Ray Transform (MLRT)} as a linear map
		\[
		L^{m,k}: C_{c}^{\infty}(\mathbb{R}^{1+n};S^m) \longrightarrow C^{\infty}(\mathbb{R}^{1+n}\times(\R^{n}\setminus\{0\}))
		\]
		for a symmetric $m$-tensor field $f^{(m)} \in L^1(\R^{1+n};S^m)$ as
		\begin{equation}\label{def_mlrt}
			L^{m,k}f^{(m)}((t,x),\om) := \sum_{i_{1},\dots,i_{m}=0}^{n}\Tilde{\omega}_{i_{1}}\dots \Tilde{\omega}_{i_{m}}\int_{\mathbb{R}}s^kf_{i_{1}\dots i_{m}}^{(m)}((t,x)+s\Tilde{\omega})ds ,
		\end{equation}
		where $\Tilde{\omega}=(c,\omega)$, $c>0$; $(t,x)\in \mathbb{R}^{1+n}$, $\omega \in \mathbb{R}^{n}\backslash\{0\}$. 
	\end{definition}
Following the same notation, we define $\Tilde{L}^{m,k}: C_{c}^{\infty}(\mathbb{R}^{1+n};S^m) \longrightarrow C^{\infty}(\mathbb{R}^{1+n}\times\sn),$ as the restriction of $L^{m,k}$ over $\R^{1+n} \times \sn$, i.e. 
	 $\tilde{L}^{m,k}f^m= L^{m,k} f^{m}|_{\Rno\times \sn}$ for all $f^m \in   C_{c}^{\infty}(\mathbb{R}^{1+n};S^m)$.
	Note that, for $k=0$ and $c=1$, $\tilde{L}^{m,k}$ coincides with $\tilde{L}^{m}$ in Definition \ref{def_lrt_1}.
	For notational simplicity, we do not write the dependency of $L^{m,k}$ and $\tilde{L}^{m,k}$ on $c>0$.
    
Our first injectivity result for MLRT is as follows.
\begin{theorem}\label{thm:whole_inj_MLRT2} Assume that, $f^{(m)} \in C_{c}^{\infty}(\mathbb{R}^{1+n};S^{m})$ for $m\geq 0$ and $n\ge 2$. If $L^{m,m}f^{(m)}((t,x),\om)=0$, for all $(t,x)\in \R^{1+n}$, $\om \in \R^{n}\backslash \{0\}$, then $f^{(m)}=0$.
	\end{theorem}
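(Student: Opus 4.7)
The plan is to argue by induction on $m$, the base case $m = 0$ being the classical injectivity of the scalar light ray transform on $C_c^\infty(\R^{1+n})$ (a well-known result, e.g., from the works of Stefanov and Ramm cited above).

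The first move in the inductive step is to upgrade the hypothesis from $k=m$ to all lower momenta. Since $L^{m,m}f^{(m)}$ vanishes at every base point, evaluate the hypothesis at $(t,x) + s_0\tilde{\omega}$ for arbitrary $s_0 \in \R$, change variables $s \mapsto s - s_0$ inside the integral, and expand $(s-s_0)^m$ by the binomial theorem to obtain
\[
\sum_{j=0}^m \binom{m}{j}(-s_0)^{m-j}\, L^{m,j}f^{(m)}((t,x),\omega) = 0 \quad \text{for every } s_0 \in \R.
\]
Since the left-hand side is a polynomial in $s_0$ that vanishes identically, each coefficient is zero, so $L^{m,j}f^{(m)} \equiv 0$ for every $j = 0, 1, \ldots, m$.

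The second move is to differentiate in $\omega_r$ for $r \in \{1, \ldots, n\}$ and reduce the tensor order. Using $\partial_{\omega_r}\tilde{\omega}_p = \delta_{rp}$ and $\partial_{\omega_r}[f^{(m)}_I((t,x) + s\tilde{\omega})] = s\,\partial_{x_r} f^{(m)}_I((t,x) + s\tilde{\omega})$, the product rule combined with the total symmetry of $f^{(m)}$ should collapse the $m$ terms arising from $\partial_{\omega_r}(\tilde{\omega}_{i_1}\cdots \tilde{\omega}_{i_m})$ into a single factor of $m$, yielding
\[
\partial_{\omega_r} L^{m,j}f^{(m)} = m\, L^{m-1,j} g^{(m-1,r)} + \partial_{x_r} L^{m,j+1} f^{(m)},
\]
where $g^{(m-1,r)} \in C_c^\infty(\R^{1+n}; S^{m-1})$ has components $(g^{(m-1,r)})_{i_1\cdots i_{m-1}} = f^{(m)}_{r,i_1,\ldots,i_{m-1}}$. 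Feeding in $L^{m,j}f^{(m)} \equiv L^{m,j+1}f^{(m)} \equiv 0$ from the first move gives $L^{m-1,j} g^{(m-1,r)} \equiv 0$ for $j = 0, \ldots, m-1$; in particular, $L^{m-1,m-1} g^{(m-1,r)} \equiv 0$, and the inductive hypothesis forces $g^{(m-1,r)} \equiv 0$ for every $r \in \{1,\ldots,n\}$. By the symmetry of $f^{(m)}$, this kills every component $f^{(m)}_I$ containing at least one spatial index. The only potentially nonzero component is the scalar $f^{(m)}_{0\cdots 0}$, which by $L^{m,0}f^{(m)} \equiv 0$ satisfies $c^m \int f^{(m)}_{0\cdots 0}((t,x) + s\tilde{\omega})\,\D s = 0$; the base case then gives $f^{(m)}_{0\cdots 0} = 0$.

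The main technical obstacle is verifying the clean form of the differentiation identity: correctly exploiting the total symmetry of $f^{(m)}$ to merge the $m$ distinct terms produced by the product rule into the single factor $m\, L^{m-1,j} g^{(m-1,r)}$, and recognizing that the remaining term is exactly $\partial_{x_r} L^{m,j+1}f^{(m)}$, which is then annihilated by the data from the first move.
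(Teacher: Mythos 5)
Your proposal is correct and follows essentially the same route as the paper: induction on $m$, reduction of the highest moment to all lower moments, the rank-reducing identity $\partial_{\omega_r}L^{m,j}f^{(m)} = m\,L^{m-1,j}(f^{(m)})_r + \partial_{x_r}L^{m,j+1}f^{(m)}$ (the paper's Lemma \ref{gen1}), and a final scalar argument for $f^{(m)}_{0\cdots 0}$. The only cosmetic difference is that you obtain the lower moments by translating the base point along the ray and expanding binomially, whereas the paper uses the equivalent infinitesimal version via iterated directional derivatives $\langle\tilde{\omega},\nabla_{t,x}\rangle^{p}$ (Lemma \ref{lem:lower_mlrt_detremination}).
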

	Notice that this injectivity result closely parallels the corresponding one for the MRT; see \cite{krishnan2018momentum}. In particular, the result shows that having no restrictions on the direction vectors, the highest-order MLRT suffices to guarantee the uniqueness of symmetric tensor fields. Allowing $\omega$ to range  over $\Rn \setminus \{0\}$, it enables us to differentiate  along all possible directions, and hence obtain a global uniqueness result. However, if we restrict $\omega$ to $\sn$ or to an open subset of $\sn$, global uniqueness no longer holds true. One of the main reasons is that, unlike in the case of MRTs, the transforms $L^{m,k}$ and $\tilde{L}^{m,k}$ are not equivalent.
	With this distinction in place, we are now ready to state our next main result, after recalling a few notations.
	 \begin{enumerate}[i)]
	 	\item For $c > 0$, let $g_c$ be the Minkowski metric with $\diag(-c^2, 1, \dots, 1)$ and zero elsewhere.
	 	\item For any $v \in S^2$, we define the operator $i_v : S^{m-2}(\R^{1+n}) \to S^m(\R^{1+n})$ by
	 	\begin{equation}
	 		i_v u := (v \odot u)_{i_1\dots i_{m}}
	 		=\frac{1}{m!}\sum_{\pi \in \Pi_{m}} u_{i_{\pi(1)}\dots i_{\pi(m-2)}}v_{i_{\pi(m-1)}i_{\pi(m)}},
	 	\end{equation}
        where $\Pi_m$ is the permutation group of $m$ elements and $\odot$ denotes the symmetric tensor product of tensors.
	 	\item For any $v \in S^2(\R^{1+n})$, we define the contraction operator $j_{v}$ as
	 	\[
	       j_{v}:S^m \to S^{m-2},\qquad 
	 	(j_{v}u)_{i_1\dots i_{m-2}}  = \sum_{i_{m-1},i_{m}=0}^{n}u_{i_1 \dots i_{m}}v_{i_{m-1}i_{m}}.
	 	\]
	 \end{enumerate}
	 	\begin{theorem}\label{thm_fullsph}
	 	Let $n\ge 2$, $m\geq 0$, and  $f^{(m)}\in C_{c}^{\infty}(\mathbb{R}^{1+n};S^m)$. Suppose that $	\Tilde{L}^{m,m}f^{(m)}((t,x),\omega)=0 \quad \forall \omega \in \S^{n-1}, (t,x) \in \mathbb{R}^{1+n},$
	 	then  \begin{align*}
	 		f^{(m)}&=\begin{cases}
	 			i_{g_{\frac{1}{c}}}f^{(m-2)}  \qum{for some} \quad f^{(m-2)} \in C_{c}^{\infty}(\mathbb{R}^{1+n};S^{m-2}) \qum{when $m\ge 2$,} \\
	 			0  \hspace{1.4cm}\qum{when}\quad m=0,1.
	 	\end{cases}
	 		\end{align*}
	 \end{theorem}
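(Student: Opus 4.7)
The forward direction is a direct computation: for any $h\in C^\infty_c(\mathbb{R}^{1+n};S^{m-2})$,
\begin{equation*}
(i_{g_{1/c}}h)(\tilde\omega,\dots,\tilde\omega)=g_{1/c}(\tilde\omega,\tilde\omega)\,h(\tilde\omega,\dots,\tilde\omega)=(|\omega|^2-1)\,h(\tilde\omega,\dots,\tilde\omega),
\end{equation*}
which vanishes on $\omega\in\mathbb{S}^{n-1}$; the same calculation in fact delivers the sharper identity
\begin{equation*}
L^{m,m}(i_{g_{1/c}}h)((t,x),\omega)=(|\omega|^2-1)\,L^{m-2,m}h((t,x),\omega),\qquad\omega\in\mathbb{R}^n\setminus\{0\},
\end{equation*}
on which the converse will pivot.

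For the converse (with $m\ge 2$), my plan is to exhibit $h\in C^\infty_c(\mathbb{R}^{1+n};S^{m-2})$ so that $L^{m,m}(f^{(m)}-i_{g_{1/c}}h)((t,x),\omega)$ vanishes for every $\omega\in\mathbb{R}^n\setminus\{0\}$; Theorem~\ref{thm:whole_inj_MLRT2} will then force $f^{(m)}=i_{g_{1/c}}h$. Since $L^{m,m}f^{(m)}$ is smooth in $\omega$ and vanishes on the regular hypersurface $\{|\omega|^2=1\}$, Hadamard's lemma yields a smooth $G$ with $L^{m,m}f^{(m)}=(|\omega|^2-1)\,G$, and the preceding identity reduces the task to realizing $G$ as $L^{m-2,m}h$ with $h\in C^\infty_c(\mathbb{R}^{1+n};S^{m-2})$. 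I would carry this out on the Fourier side: a direct computation gives
\begin{equation*}
\widehat{L^{m,m}f^{(m)}}(\tau,\xi,\omega)=2\pi(-\mathrm{i})^m\,\delta^{(m)}(c\tau+\omega\cdot\xi)\,\hat f^{(m)}_{\tilde\omega}(\tau,\xi),
\end{equation*}
where $\hat f^{(m)}_{\tilde\omega}(\tau,\xi):=\sum\tilde\omega^{i_1}\cdots\tilde\omega^{i_m}\hat f^{(m)}_{i_1\dots i_m}(\tau,\xi)$ is a polynomial of degree at most $m$ in $\omega$ with entire-of-exponential-type coefficients (Paley--Wiener). The hypothesis then translates to: for each $\omega\in\mathbb{S}^{n-1}$ the function $\hat f^{(m)}_{\tilde\omega}$ vanishes together with its $\tilde\omega$-directional derivatives up to order $m$ on the hyperplane $\{c\tau+\omega\cdot\xi=0\}$. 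Combining this higher-order vanishing with the trace decomposition $S^m=\ker j_{g_{1/c}}\oplus\mathrm{Im}\,i_{g_{1/c}}$ and the polynomial structure in $\omega$ should produce a factorization $\hat f^{(m)}_{\tilde\omega}=(|\omega|^2-1)\,\hat h_{\tilde\omega}$ for a symmetric $(m-2)$-tensor $\hat h$, whose inverse Fourier transform provides the sought compactly supported $h$.

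The base cases $m=0,1$ admit no $i_{g_{1/c}}$ correction and are handled directly. For $m=0$ this is the classical injectivity of the LRT on compactly supported scalars for $n\ge 2$. For $m=1$, the integration-by-parts identity $(\tilde\omega\cdot\nabla_{(t,x)})L^{m,k}f=-k\,L^{m,k-1}f$ (obtained by writing $\tilde\omega\cdot\nabla f_{i_1\dots i_m}((t,x)+s\tilde\omega)=\partial_s f_{i_1\dots i_m}((t,x)+s\tilde\omega)$ and integrating by parts) reduces the hypothesis $\tilde L^{1,1}f^{(1)}=0$ to $\tilde L^{1,0}f^{(1)}=0$; since the kernel of $\tilde L^{1,0}$ on compactly supported $1$-forms consists of exact forms $f^{(1)}=d\phi$, substituting back gives $\tilde L^{0,0}\phi=0$, hence $\phi=0$ and $f^{(1)}=0$. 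The principal obstacle in the general step is the Fourier/algebraic extraction of $h$: the hypothesis only yields vanishing of $\hat f^{(m)}_{\tilde\omega}$ on the single hyperplane $\tilde\omega\cdot(\tau,\xi)=0$ (for each sphere direction $\omega$) rather than on all of $(\tau,\xi)$-space, so promoting this to the global polynomial-in-$\omega$ divisibility by $|\omega|^2-1$ requires a careful interplay between the order-$(m+1)$ hyperplane vanishing encoded by $\delta^{(m)}$, the trace decomposition of $\hat f^{(m)}$, and the degree-$m$ polynomial-in-$\omega$ structure of $\hat f^{(m)}_{\tilde\omega}$.
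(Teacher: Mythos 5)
Your sufficiency direction and the base cases $m=0,1$ are fine: the identity $L^{m,m}(i_{g_{1/c}}h)=(|\omega|^2-1)L^{m-2,m}h$ is correct (it is essentially the paper's Lemma~\ref{backward}), and the $m=1$ reduction via $\langle\tilde\omega,\nabla_{t,x}\rangle$ and the known kernel description of $\tilde L^{1,0}$ on one-forms is sound. However, for $m\ge 2$ -- which is the entire content of the theorem -- your argument is not a proof but a reduction to a statement you do not establish. Factoring $L^{m,m}f^{(m)}=(|\omega|^2-1)G$ by Hadamard's lemma is easy; the decisive step is showing that $G$ lies in the \emph{range} of $L^{m-2,m}$ on compactly supported symmetric $(m-2)$-tensors, and you explicitly leave this open (``the principal obstacle''). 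This is not a technical detail: it is a range characterization for the MLRT that is nowhere available, and the hypothesis only controls the restriction of the degree-$m$ polynomial $\hat f^{(m)}_{\tilde\omega}$ (and its $\tilde\omega$-transversal derivatives up to order $m$) to the incidence variety $\{|\omega|=1,\ \tilde\omega\cdot(\tau,\xi)=0\}$; it gives no derivatives in $\omega$ off the sphere, so there is no mechanism in your sketch for promoting this to the global divisibility $\hat f^{(m)}_{\tilde\omega}=(|\omega|^2-1)\hat h_{\tilde\omega}$ with $\hat h$ a genuine tensor-valued Paley--Wiener function. Extracting that divisibility is where all the work of the theorem sits.

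For contrast, the paper avoids the range question entirely: it first splits $f^{(m)}=A+i_{g_{1/c}}f^{(m-2)}$ with $JA=0$ (Proposition~\ref{decompose}; note $J\neq j_{g_{1/c}}$ unless $c=1$, so your $\ker j_{g_{1/c}}\oplus\operatorname{Im}i_{g_{1/c}}$ is not quite the decomposition used), reduces to proving $A=0$, takes the partial Fourier transform in $t$ only, evaluates at $\tau=0$, symmetrizes in $\omega\mapsto-\omega$ to produce the tensors $G^{\pm}$ on $\mathbb{R}^n$, and invokes the injectivity of the classical Euclidean momentum ray transform. The trace condition $JA=0$ then feeds a positivity (inner-product) argument forcing each graded piece $\hat A^{(l)}(0,\cdot)$ to vanish, and an induction on $\partial_\tau^{n_1}$ together with Paley--Wiener in $\tau$ finishes. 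If you want to salvage your approach, you would need to either prove the range/divisibility statement directly or reroute through an argument of this type; as written, the $m\ge2$ case is unproven.
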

	We next restrict $\tilde{L}^{m,m} f^m$ to an open set subset   of $\sn$  denoted by $	B_{n}(\omega_0,\epsilon)$, and defined by 
	\begin{equation}\label{open_set_sphere}
		B_{n}(\omega_0,\epsilon) := \{ \omega \in \S^{n-1} : |\omega - \omega_0| < \epsilon\},
		\quad \mbox{for a fixed } \omega_0 \in \S^{n-1} \,\, \mbox{and $ \epsilon>0$},
	\end{equation}
	With this notation at hand, we now state the third main result which is 
	\begin{theorem}\label{thm_partsph}
		Let $n\geq 3$ and $f^{(m)}\in C_{c}^{\infty}(\mathbb{R}^{1+n};S^m)$, where $m\geq 0$. Fix $\om_0 \in\sn$ and $\delta>0$.  
		Then $\Tilde{L}^{m,m}f^{(m)}((t,x),\omega)=0$ for all $ \omega \in B_{n}(\omega_0,\delta)$, and for all $(t,x) \in \mathbb{R}^{1+n}$ implies 
		\begin{align*}
			f^{(m)}&=\begin{cases}
				i_{g_{\frac{1}{c}}}f^{(m-2)}  \qum{for some} \quad f^{(m-2)} \in C_{c}^{\infty}(\mathbb{R}^{1+n};S^{m-2}) \qum{when $m\ge 2$,} \\
				0 \qum{when}\quad m=0,1.
			\end{cases}
		\end{align*}
	\end{theorem}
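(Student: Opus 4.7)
The plan is to reduce the partial-data statement to the full-sphere result (Theorem \ref{thm_fullsph}) by pushing the hypothesis from $B_n(\omega_0,\delta)$ to all of $\sn$ via analytic continuation on the Fourier side. The observation motivating the approach is that although $\omega\mapsto \tilde{L}^{m,m}f^{(m)}((t,x),\omega)$ is only smooth in $\omega$, its Fourier transform in $(t,x)$ depends polynomially on $\omega$, with coefficients that are entire in $\eta=(\tau,\xi)$ by Paley--Wiener.

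First I would take the Fourier transform of the hypothesis in the $(t,x)$ variables. Setting
\[
P_m(\eta,\omega):=\sum_{i_1,\dots,i_m=0}^{n}\tilde{\omega}_{i_1}\cdots\tilde{\omega}_{i_m}\,\hat{f}^{(m)}_{i_1\cdots i_m}(\eta),
\]
a routine computation via the change of variables $(t',x')=(t,x)+s\tilde{\omega}$ turns the hypothesis into
\[
P_m(\eta,\omega)\,\delta^{(m)}(\tilde{\omega}\cdot\eta)=0 \qquad\text{as a distribution in }\eta,
\]
for every $\omega\in B_n(\omega_0,\delta)$. Testing against smooth functions in directions transverse to $H_\omega:=\{\eta : \tilde{\omega}\cdot\eta=0\}$ shows that this is equivalent to requiring, for each such $\omega$, that the entire function $P_m(\cdot,\omega)$ together with its first $m$ derivatives in the direction $\tilde{\omega}$ vanish on the characteristic hyperplane $H_\omega$.

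Next I would parametrize $H_\omega$ by $\xi\in\Cb^n$ via $\eta=(-\omega\cdot\xi/c,\xi)$ and, for $k=0,1,\dots,m$, introduce
\[
F_k(\xi,\omega):=(\tilde{\omega}\cdot\partial_\eta)^k P_m(\eta,\omega)\Big|_{\eta=(-\omega\cdot\xi/c,\,\xi)}.
\]
Because $P_m$ is polynomial in $\omega$ with entire coefficients in $\eta$ and the substitution is polynomial in $(\xi,\omega)$, each $F_k$ is entire on $\Cb^n\times\Cb^n$, and the hypothesis rewrites as $F_k(\xi,\omega)=0$ for $\xi\in\R^n$ and $\omega\in B_n(\omega_0,\delta)$. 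Entireness in $\xi$ first gives $F_k(\xi,\omega)=0$ on $\Cb^n\times B_n(\omega_0,\delta)$. Fixing $\xi$, the restriction of $\omega\mapsto F_k(\xi,\omega)$ to $\sn$ is a real-analytic function on the connected real-analytic manifold $\sn$ and vanishes on the nonempty open set $B_n(\omega_0,\delta)$, so the identity principle forces it to vanish on all of $\sn$. Hence $F_k(\xi,\omega)=0$ for every $\xi\in\Cb^n$, $\omega\in\sn$, and $k\in\{0,1,\dots,m\}$.

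Unwinding these equivalences yields $\tilde{L}^{m,m}f^{(m)}((t,x),\omega)=0$ for every $(t,x)\in\Rno$ and every $\omega\in\sn$, at which point Theorem \ref{thm_fullsph} delivers the stated conclusion. I expect the main obstacle to lie in the Fourier-side setup of the first two steps — carefully justifying that $P_m\,\delta^{(m)}(\tilde{\omega}\cdot\eta)=0$ amounts to the vanishing of $P_m$ and its first $m$ $\tilde{\omega}$-normal derivatives on $H_\omega$, and verifying that the auxiliary family $\{F_k\}_{k=0}^{m}$ is jointly entire in $(\xi,\omega)$. The subsequent analytic continuation across $\sn$ and the reduction to Theorem \ref{thm_fullsph} are then standard.
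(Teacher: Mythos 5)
Your route is genuinely different from the paper's. The paper decomposes $f^{(m)}=A^{(m)}+i_{g_{1/c}}f^{(m-2)}$ with $JA^{(m)}=0$ (Proposition \ref{decompose}), then applies tangential gradients in $\omega$ over $B_n(\omega_0,\delta)$ to produce relations among the sub-tensors $(A^{(m)})_i$ of rank $m-1$ (as in Lemmas \ref{gen1}, \ref{lemma2.7}, \ref{preresult_1} and \eqref{rankred_m_ten}--\eqref{relation2}), and closes an induction on $m$ whose base cases are $m=0$ (Fourier slice plus Paley--Wiener) and $m=1$ (the kernel of the restricted LRT on vector fields from \cite{krishnan2020inverse}). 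You instead pass to the Fourier side, encode the hypothesis as the vanishing of the $m$-jet of $P_m(\cdot,\omega)$ transverse to $H_\omega$, and use joint entireness of the functions $F_k(\xi,\omega)$ together with the real-analytic identity theorem on $\sn$ to upgrade the hypothesis from $B_n(\omega_0,\delta)$ to all of $\sn$, after which Theorem \ref{thm_fullsph} finishes. The individual steps you list are each standard and I could not fault any of them: the distributional identity $\widehat{L^{m,m}f}(\cdot,\omega)=c_m P_m(\eta,\omega)\delta^{(m)}(\tilde\omega\cdot\eta)$ is the convolution theorem for $C_c^\infty * \mathcal S'$; the equivalence with the jet conditions is correct; $F_k$ is entire in $(\xi,\omega)$ because $\hat f$ is entire by Paley--Wiener and all substitutions are polynomial; and a real-analytic function on the connected manifold $\sn$ vanishing on a nonempty open set vanishes identically.

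That said, there is a serious problem you must resolve before this can stand: nowhere do you use $n\ge 3$, so your argument, if sound, proves the theorem for $n=2$ as well. The paper's remark following Theorem \ref{thm_partsph} asserts that the statement is \emph{false} in two dimensions and cites a counterexample in \cite[Theorem 2.1]{siamak_vector_field_lrt}; the paper's own proof also leans on $n\ge3$ through the $m=1$ base case. One of three things is true: (i) your analytic-continuation step hides a flaw (the natural suspect is the passage ``vanishes on $B_n(\omega_0,\delta)$ hence on all of $\sn$'' --- note that continuing in $\omega$ at fixed $\xi$ moves the evaluation point $\eta=(-\omega\cdot\xi/c,\xi)$, so you are continuing along a family of slices rather than at a fixed frequency, and you should check this against the explicit $2$D counterexample); (ii) the cited counterexample concerns a genuinely different statement (e.g.\ non-compactly-supported fields, or a local/support-theorem version rather than global data over all $(t,x)$), in which case your proof would be a strengthening of the theorem to $n\ge2$ and the paper's remark would need revision; or (iii) Theorem \ref{thm_fullsph} itself fails at $n=2$. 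You cannot leave this unaddressed: either exhibit where the argument uses $n\ge3$, or confront \cite[Theorem 2.1]{siamak_vector_field_lrt} directly and explain why it does not contradict your conclusion. Until that is done the proof cannot be accepted as a replacement for the paper's argument.
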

	\begin{remark}
	Note that Theorem \ref{thm_partsph} does not hold in two dimensions, since there are not enough available directions $\omega$. However, the above result can still be established in two dimensions if one allows $\omega \in B_n(\pm \omega_0, \delta)$ for some $\delta>0$, with a slight modification of the proof of Theorem \ref{thm_fullsph}.  A counterexample, and further discussion can be found in \cite[Theorem 2.1]{siamak_vector_field_lrt}.
\end{remark}
	Now, we turn our attention to the reconstruction of tensor fields from MLRT. We present an algorithm for reconstructing a symmetric tensor field from its MLRTs. To simplify the calculations, we consider $c=1$ for the reconstruction theorems. For a fixed $\delta>0$ and a fixed $\om_0 \in \sn$, we define the set
	\[
	H_n := \{\zeta=(\zeta_0,\zeta') \in \R^{1+n}: |\zeta_0|<|\zeta'|, \quad \zeta \perp (1,\om), \quad \text{for some }\om \in \Bdl \}. \]
	The next result specifically concerns the reconstruction algorithm for a vector field.
	\begin{theorem}\label{rec_roof_fn}
		Let $n\geq 3$ and assume that for $f^{(1)}$ $\in C_{c}^{\infty}(\mathbb{R}^{1+n};S^{1})$, we know $\tilde{L}^{1,k}f^{(1)}((t,x),\omega )$ for $(t,x)\in \mathbb{R}^{1+n}$, $\omega \in B_{n}(\omega_0,\delta)$ and $k=0,1$. Then $\widehat{f^{(1)}}$ can be recovered in the open set $H_n$.\\
		More specifically, for any $\zeta \in H_n$ we have
		\[
		\hat{f}(\zeta)
		=\frac{1}{2}[\Phi_2-\Phi_4]\zeta - M^{-1}\Phi_3,
		\]
		where $\Phi_j$(see Section \ref{sec6}, equations \eqref{eq_1}, \eqref{eq_2}, and \eqref{eq_3} for more details) for $j=1,\cdots,4$ are fully determined by $\tilde{L}^{1,k}f^{(1)}((t,x),\omega)$, $k=0,1$. 
	\end{theorem}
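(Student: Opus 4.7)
The plan is to apply the Fourier transform in the spacetime variable $(t,x)$ to the two data functions $\tilde{L}^{1,0}f^{(1)}$ and $\tilde{L}^{1,1}f^{(1)}$, extract a system of linear equations along the light-cone hyperplanes $\tilde{\omega}\cdot\zeta=0$, and then invert this system on each fibre $\zeta\in H_n$ to obtain $\widehat{f^{(1)}}(\zeta)$.

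The key preliminary identity is the differential relation
\[
\partial_{\omega_j}\tilde{L}^{1,0}f^{(1)}(p,\omega) \;=\; \int_{\R} f^{(1)}_j(p+s\tilde{\omega})\,\D s \;+\; \partial_{x_j}\tilde{L}^{1,1}f^{(1)}(p,\omega), \qquad j=1,\dots,n,
\]
obtained by differentiating \eqref{def_mlrt} under the integral sign. Combined with the defining expansion $\tilde{L}^{1,0}f^{(1)} = \sum_i\tilde{\omega}_i \int f^{(1)}_i(p+s\tilde{\omega})\,\D s$, this shows that every componentwise scalar light-ray transform $\int f^{(1)}_i(p+s\tilde{\omega})\,\D s$, $i=0,\dots,n$, is an explicit linear combination of $\tilde{L}^{1,0}f^{(1)}$, $\tilde{L}^{1,1}f^{(1)}$ and their first derivatives in $(x,\omega)$. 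Applying the Fourier slice theorem componentwise would therefore already produce $\widehat{f^{(1)}_i}(\zeta)$ for every $\zeta\in\Rno$ such that $\tilde{\omega}\cdot\zeta=0$ for some $\omega\in B_n(\omega_0,\delta)$; in particular for every $\zeta\in H_n$.

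To match the explicit closed-form reconstruction in the theorem, I would instead repackage this information by taking Fourier transforms in $(t,x)$ and then integrating over the open angular set $S_\zeta:=\{\omega\in B_n(\omega_0,\delta):\tilde{\omega}\cdot\zeta=0\}$, which is a nonempty open subset of an $(n-2)$-sphere for every $\zeta\in H_n$ (this is where both $n\ge 3$ and the strict inequality $|\zeta_0|<|\zeta'|$ are used). Suitably weighted scalar averages produce $\Phi_2(\zeta),\Phi_3(\zeta),\Phi_4(\zeta)$ as in equations \eqref{eq_1}--\eqref{eq_3} of Section \ref{sec6}, while a second-moment average of the form $\int_{S_\zeta}\tilde{\omega}\otimes\tilde{\omega}\,d\mu(\omega)$ produces the matrix $M(\zeta)$. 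Since $\tilde{\omega}\cdot\zeta=0$ on $S_\zeta$, the identity $M(\zeta)\zeta=0$ holds, so $M(\zeta)$ naturally acts on the complement $\zeta^{\perp}$. Solving the resulting system separately for the $\zeta$-component and the $\zeta^\perp$-component of $\widehat{f^{(1)}}(\zeta)$ yields $\tfrac{1}{2}(\Phi_2-\Phi_4)\zeta$ and $-M^{-1}\Phi_3$, whose sum is the claimed formula.

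The main obstacle is proving the invertibility of $M(\zeta)$ on $\zeta^{\perp}$ uniformly on $H_n$; this is where the hypotheses $n\ge 3$ (so that $S_\zeta$ is positive-dimensional) and $|\zeta_0|<|\zeta'|$ (so that $S_\zeta$ is nonempty and open) are both essential, and it requires a careful angular-moment calculation over the open piece of sphere $S_\zeta$. A secondary technical point will be interpreting the distributional $\delta'$ arising from the Fourier side of the $k=1$ transform in a way consistent with the angular integrations, so that the resulting $\Phi_j$'s become bona fide data-determined scalar or vector quantities and the normalization constants collapse to precisely the stated form.
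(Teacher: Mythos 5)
There is a genuine gap at the very first step. Your ``key preliminary identity'' differentiates the data in $\omega_j$, i.e.\ it uses the full gradient $\partial_{\omega_j}L^{1,0}f^{(1)}$ on $\mathbb{R}^{n}\setminus\{0\}$ (this is Lemma \ref{gen1} of the paper). But the hypothesis only provides $\tilde{L}^{1,k}f^{(1)}(\cdot,\omega)$ for $\omega$ in the spherical patch $B_{n}(\omega_0,\delta)\subset\mathbb{S}^{n-1}$, and --- unlike for the momentum ray transform --- $L^{m,k}$ off the unit sphere is \emph{not} determined by its restriction to the sphere: for $r\neq 1$ the line with direction $(1,r\omega)$ has a different slope in spacetime and carries genuinely new information. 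Hence only the tangential part of $\nabla_{\omega}$ is computable from the data (this is exactly why the paper works with $\nabla_S$ in Section \ref{Sub-section3.3}). If you replace $\partial_{\omega_j}$ by the tangential derivative, the radial term $\sum_j\omega_j\partial_{\omega_j}L^{1,0}f^{(1)}=\tilde{L}^{1,0}f^{(1)}-\tilde{L}^{0,0}(f^{(1)})_0+\sum_j\omega_j\partial_{x_j}\tilde{L}^{1,1}f^{(1)}$ reappears, and it contains precisely the unknown scalar transform $\tilde{L}^{0,0}(f^{(1)})_0$ of the time component. So the claim that ``every componentwise scalar light-ray transform is an explicit linear combination of the data and its first derivatives'' is circular: the quantity you cannot eliminate this way corresponds exactly to the component of $\widehat{f^{(1)}}(\zeta)$ along $\zeta$, which is the whole difficulty the theorem is designed to resolve.

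The fallback construction does not close this gap either. In the paper, $M$ is not an angular second-moment matrix $\int_{S_\zeta}\tilde{\omega}\otimes\tilde{\omega}\,d\mu$; it is the orthogonal block matrix $M=\mathrm{diag}(1,R)$ for a rotation $R$ normalizing $\zeta'$, and $\Phi_3$ in \eqref{eq_3} is a finite sum over $n$ explicitly chosen directions $\mathrm{w}_1,\dots,\mathrm{w}_n$ near $R\omega$ whose lifts $(1,\mathrm{w}_i)$ are verified to be linearly independent and to span $\xi^{\perp}$; the ``inversion'' is then just a Gram--Schmidt change of basis, so no moment-matrix invertibility ever has to be proved. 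In your version, the invertibility of $M(\zeta)$ on $\zeta^{\perp}$ --- which you yourself flag as ``the main obstacle'' --- is exactly the content that must be supplied, and it is not; nor do you derive the two slice identities \eqref{eq_1}--\eqref{eq_2}, which are what convert the $k=0$ data into the projections $(1,\omega)\cdot\widehat{f^{(1)}}(\zeta)$ spanning $\zeta^{\perp}$ and the $k=1$ data into the directional-derivative relation that finally determines the $\zeta$-component as $\tfrac12(\Phi_2-\Phi_4)$. As written, the proposal identifies the correct shape of the answer but does not prove it.
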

	\begin{remark}\label{reconstruction_analytic-cont.}
	Note that $H_n$ is an open set containing space-like vectors in $\R^{1+n}$. Thus, the above reconstruction theorem is valid for an open subset of the space-like cone in $\R^{1+n}$. Given that $f^{(1)}$ is compactly supported, using Paley-Wiener theorem, one can reconstruct $\widehat{f^{(1)}}$ in the missing portions.
	\end{remark}

We have established the reconstruction algorithm for vector fields and now extend this framework to symmetric tensor fields of arbitrary rank.
	\begin{theorem}\label{rec_mt}
		Let $n\geq 3$ and $m\geq 2$. Assume that, $f^{(m)}  \in C_{c}^{\infty}(\mathbb{R}^{1+n};S^m)$ satisfies $j_{g}f^{(m)} = 0$. If $\tilde{L}^{m,k}f^{(m)}((t,x),\omega)$ is known for $k=0,1,\dots ,m$; $(t,x)\in \mathbb{R}^{1+n}$ and $\omega \in B_{n}(\omega_0,\delta)$ for some fixed $\omega_0 \in \mathbb{S}^{n-1}$, $\delta >0$, then we can recover $f^{(m)}$.
	\end{theorem}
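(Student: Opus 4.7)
The plan is to lift the Fourier-side reconstruction of Theorem~\ref{rec_roof_fn} to arbitrary rank, combining the $m+1$ momentum transforms $\tilde{L}^{m,0},\ldots,\tilde{L}^{m,m}$ with the algebraic constraint $j_g f^{(m)}=0$ in order to cut down the residual ambiguity identified in Theorem~\ref{thm_partsph}.

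First I would take the Fourier transform in $(t,x)$. The change of variables $y=(t,x)+s\tilde{\omega}$ (together with $\int_{\R} s^k e^{\i s a}\,\D s = 2\pi(-\i)^k\delta^{(k)}(a)$) gives the distributional slice identity
\[
\widehat{\tilde{L}^{m,k}f^{(m)}}(\zeta,\omega)
= 2\pi(-\i)^k\,\delta^{(k)}(\tilde{\omega}\cdot\zeta)\;
\tilde{\omega}^{i_1}\cdots\tilde{\omega}^{i_m}\,
\widehat{f^{(m)}_{i_1\ldots i_m}}(\zeta),
\]
so on the hyperplane $\{\tilde{\omega}\cdot\zeta=0\}$ the datum for index $k$ records the $k$-th transversal derivative (in the $\tilde{\omega}$-direction) of the scalar polynomial $\tilde{\omega}^{\otimes m}:\widehat{f^{(m)}}(\zeta)$. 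For $\zeta\in H_n$ there is, by construction, a positive-dimensional open family of $\omega\in B_n(\omega_0,\delta)$ with $\tilde{\omega}\cdot\zeta=0$; varying $\omega$ inside this family together with $k=0,\dots,m$ produces a linear system on the components of $\widehat{f^{(m)}}(\zeta)$.

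Next I would argue that this system pins down $\widehat{f^{(m)}}(\zeta)$. By Theorem~\ref{thm_partsph}, the joint null space of the family $\{\tilde{L}^{m,k}\}_{k=0}^m$ on $B_n(\omega_0,\delta)$ is precisely the image of $i_g$ acting on rank-$(m-2)$ tensors. Using the standard algebraic identity relating $j_g\,i_g$ to the identity plus a multiple of $i_g j_g$ on $S^{m-2}(\R^{1+n})$, one checks that $j_g$ is injective on $i_g(S^{m-2})$; iterating if necessary, the trace-free hypothesis $j_g f^{(m)}=0$ forces the $i_g$-component of $\widehat{f^{(m)}}(\zeta)$ to vanish. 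This makes $\widehat{f^{(m)}}(\zeta)$ uniquely recoverable on $H_n$, generalising the explicit expression in Theorem~\ref{rec_roof_fn} (which is the case $m=1$, where the trace-free constraint is vacuous). A natural way to organise the explicit reconstruction is by induction on $m$: use $\tilde{L}^{m,0}$ to recover $\widehat{f^{(m)}}(\zeta)$ modulo the symmetric ideal generated by $\zeta$, then use $\tilde{L}^{m,1},\ldots,\tilde{L}^{m,m-1}$ to peel off successive lower-rank corrections, and close with $j_g f^{(m)}=0$ to eliminate the final $i_g$-residual.

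Finally, since $f^{(m)}$ is compactly supported, Paley--Wiener makes $\widehat{f^{(m)}}$ entire, so its values on the open set $H_n$ determine it everywhere by analytic continuation, and an inverse Fourier transform returns $f^{(m)}$. The main obstacle, I expect, is the explicit inversion of the linear system at general rank: one must arrange the $(m+1)$-fold data so that every tensor component apart from the $i_g$-residual is cleanly isolated, and verify invertibility of the resulting coefficient matrices (generalising the matrix $M$ and the functions $\Phi_j$ of Theorem~\ref{rec_roof_fn}) on the $(n-2)$-dimensional family of admissible $\omega$'s. The algebra should be manageable but tedious, with the rank-one formula serving as a template.
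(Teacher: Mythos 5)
Your proposal takes a genuinely different route from the paper. The paper never assembles a rank-$m$ linear system on the Fourier side; it runs an induction on the rank, applying the tangential gradient $\nabla_S$ in $\omega$ twice to the physical-space data $\tilde L^{m,k}f^{(m)}$ over $B_n(\omega_0,\delta)$, which together with $j_gf^{(m)}=0$ produces an explicit formula for $\tilde L^{m-1,k}(f^{(m)})_0$ in terms of known quantities and then, via \eqref{rec_1_m}, all of $\tilde L^{m-1,k}(f^{(m)})_i$. Since the slices $(f^{(m)})_i$ are again trace free, the induction hypothesis applies and everything bottoms out in the vector-field algorithm of Theorem \ref{rec_roof_fn}. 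Your abstract determination argument is correct and arguably cleaner: by Lemma \ref{lem:lower_mlrt_detremination} the joint kernel of $\{\tilde L^{m,k}\}_{k=0}^{m}$ over $B_n(\omega_0,\delta)$ coincides with that of $\tilde L^{m,m}$ alone, which by Theorem \ref{thm_partsph} is the image of $i_g$, and the commutator identity of Lemma \ref{lem_commutator} (iterated, since the rank drops by two each time) shows $j_g$ is injective on that image; hence a trace-free field is uniquely determined by the data. If ``recover'' is read as ``uniquely determine,'' this already closes the proof.

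The constructive half of your plan, however, has a genuine gap, and it sits exactly where you flag ``the main obstacle.'' First, Theorem \ref{thm_partsph} characterizes the kernel of the transform acting on tensor \emph{fields}; it does not give the pointwise-in-$\zeta$ statement you need, namely that for each fixed $\zeta\in H_n$ the $k=0$ slice data (the values of the degree-$m$ polynomial $\tilde\omega^{\otimes m}\!:\!\widehat{f^{(m)}}(\zeta)$ on the $(n-2)$-dimensional admissible family of $\tilde\omega$) determines the tensor $\widehat{f^{(m)}}(\zeta)$ exactly modulo $\sigma(\zeta\odot S^{m-1})+i_g(S^{m-2})$. That is an algebraic statement about the ideal generated by $\langle v,\zeta\rangle$ and $g(v,v)$ on the relevant cone, and it must be proved, not inferred from the global kernel result. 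Second, because the $k\ge 1$ data record transversal derivatives, the system at fixed $\zeta$ is not closed at a point: the unknown $\sigma(\zeta\odot\,\cdot\,)$-residuals re-enter the higher-moment identities through their $\zeta$-derivatives, and one must verify that the coefficient of each newly introduced unknown is invertible (in the $m=1$ case this is the scalar $\sum_j\tilde\omega_j^2=2$ behind the factor $\tfrac12[\Phi_2-\Phi_4]$). Neither of these verifications is routine bookkeeping at general rank, and without them the proposed algorithm is a plausible template rather than a proof; the paper's rank-reduction avoids both issues by only ever inverting the $m=1$ system.
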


The light ray transform arises naturally in the study of wave propagation. Solutions of the wave equation, in homogeneous medium, carry singularities that travel at the speed of light along straight paths, often referred to as light rays. To analyze or reconstruct information about the underlying medium, one is therefore led to consider integrals of functions or tensor fields along these trajectories, which is precisely the definition of the light ray transform. This approach was first introduced in the works of Stefanov, Ramm, and Sj\"ostrand \cite{stefanov1989inverse, ramm1991inverse}, where injectivity of LRT for functions was established.
Subsequently, the works \cite{krishnan2020inverse,krishnan2020uniqueness} extended the LRT to vector fields and symmetric two tensor fields, establishing corresponding injectivity results. The work \cite{feizmohammadi2021light} studied the LRT for symmetric $m$ tensor fields  in stationary and static Lorentzian geometries, reducing tensor tomography via LRT to the spatial X-ray transform. A support theorem for the LRT acting on functions was proved in \cite{stefanov2017support}, while \cite{siamak_vector_field_lrt} established a support theorem for the LRT on vector fields.  A microlocal analysis of the LRT, viewed as a Fourier integral operator, was developed in \cite{LLSU_lrt_Lorentzian}, which established recovery of spacelike singularities in the absence of conjugate points. The use of microlocal analysis within the framework of integral geometry dates back to \cite{GU_duke,Greenleaf_Uhlmann_duke}. For further results on parametric constructions and their connections to inverse problems in wave equations and cosmology, see \cite{Yiran_wang_lrt,Yiran_integral_geo_wave_equ,Yiran_cosmology_tom}.
More recently, the study of the LRT has been extended to pseudo-Riemannian settings in \cite{Ilmavirta_JDG, agrawal2025light}, highlighting its versatility and deep connections with both geometry and inverse problems.
	
The momentum ray transform (MRT) was introduced in the works \cites{Sharafutdinov_1986_momentum,Sharafutdinov_book}, where the kernel of the MRT acting on compactly supported symmetric tensor field distributions was described. A support theorem result for MRT was established in the work \cite{Anuj_Mishra_support} on analytic manifolds. Uniqueness, explicit inversion formulas, and range characterizations of the MRT for Schwartz-class symmetric tensor fields were obtained in \cite{krishnan2018momentum,krishnan2020momentum}, while analogous results for partial MRT were studied in \cite{MS_Siam,Mishra_Sahoo_PAMS}. More recently, unique continuation properties (UCP) for the MRT were established in \cite{AKS_UCP,IKS_Ucp}. In addition, \cite{IKS_Ucp} introduced a fractional version of the MRT for functions and established a UCP in that setting, which was subsequently extended to symmetric tensor fields in \cite{KJKP_fractional_mrt}.
A normal operator approach to study MRT was initiated in \cite{AKS_UCP}, and this perspective was subsequently employed in \cite{JKKS_I,JKKS_II} to derive inversion formulas in terms of the MRT’s normal operator. 
	
The study of ray transforms has been strongly motivated by applications in medical imaging, geophysics, and other branches of science. Moreover, ray transforms have emerged as essential tools in addressing Calder\'on-type problems for second-order PDEs, such as the Schr\"odinger and Maxwell equations, see \cite{Syl_Uhl,DKSU_invention,KSU_Maxwell,Uhlmann_survey}. More recently, MRTs have also been utilized to study inverse problems related to  biharmonic and polyharmonic operators, see \cite{BKS_MRT_polyharmonic,SS_linearized,BK_polyharmonic_localdata,BKSU_biharmonic_nonlinear}. We refer the survey \cite{Krishnan_Sahoo_IPMS} for more results related to MRT. This naturally suggests that the momentum light ray transforms (MLRTs) may provide a framework for solving inverse problems associated with higher-order wave equations of the form $\square^2 + \text{(lower order terms)}$, see  \cite{Isakov_biwave_type,BK_biwave}, just as the LRT has played a central role in studying inverse problems for wave equations due to singularity propagation along light rays (null bicharacteristics). 
A general result in this direction appears in \cite{Plamen_Yang,OSSU_real}, where the authors investigated inverse problems for real principal type operators (such as the wave and Tricomi operators) and introduced a null bicharacteristics transform for the unknowns and the latter transform was further studied in \cite{Marco_Salo_Tzou} in the setting of double fibration transform introduced in \cite{Helgason}.

The rest of the paper is organized as follows. In Section \ref{sec2}, we establish a decomposition result, which will be used in subsequent sections to prove the main theorems, and we also recall some existing results relevant to our purposes. Section \ref{sec:injectivity with full data} is devoted to uniqueness results and is divided into three subsections. In Subsection \ref{Sub-section3.1}, we prove a global injectivity result (Theorem \ref{thm:whole_inj_MLRT2}) when the direction vector varies over $\Rn \setminus \{0\}$. In Subsection \ref{Sub-section3.2}, we describe the kernel of the MLRT when $\omega$ varies in $\sn$. Moreover, in Subsection \ref{Sub-section3.3}, we provide the kernel description when $\omega$ varies within a neighborhood $B_n(\omega_0, \delta)$ for some fixed $\om_0\in \sn$ and $\delta > 0$. Finally, in Section \ref{sec6}, we present an inversion algorithm to recover vector fields and the trace-free component of higher order symmetric tensor fields using the MLRT restricted to $B_n(\omega_0, \delta)$.

\section{Preliminary results} \label{sec2}

In this section, we collect several preliminary results related to the Light Ray Transform (LRT). Our analysis in this section follows the existing literature, though we highlight and discuss proofs of certain results that involve either mild generalizations or adaptations of previously known arguments.
We begin with the proof of the injectivity of the LRT for functions  with $c>0$. Recall that this result was established in \cite{stefanov1989inverse,ramm1991inverse} for the case $c=1$, utilizing the Fourier Slice theorem.
To this end, we  recall the definition of partial Fourier transform in the hyperplane $(c,\omega)^{\perp}$ as
	\[
	(\F_{(c,\omega)^{\perp}}g)(\zeta)=\int_{(c,\omega)^{\perp}}g(l)e^{-\i l\cdot \zeta }dH(l),
	\]
	where $\i^2=-1$, $dH$ denotes the $n$ dimensional Lebesgue measure on $(c,\omega)^{\perp}$.
	\begin{lemma}[Fourier slice theorem, \cite{ramm1991inverse}] \label{FST}
		For every $f \in L^{1}(\R^{1+n})$ and each fixed $\omega \in \S^{n-1}$, 
		\[
		\hat{f}(\zeta)=\sqrt{2}\F_{(c,\omega)^{\perp}}(\tilde{L}^{0,0}f(-,\omega))(\zeta), \quad\text{whenever}\quad \zeta \perp (c,\omega). 
		\]
	\end{lemma}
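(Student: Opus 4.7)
The strategy is to evaluate $\hat f(\zeta)$ as an iterated integral over the direct-sum splitting $\mathbb{R}^{1+n}=(c,\omega)^{\perp}\oplus\mathbb{R}\tilde{\omega}$, and then to exploit the orthogonality $\zeta\perp(c,\omega)$ to kill the phase along the line direction. I would parametrize $y\in\mathbb{R}^{1+n}$ uniquely as $y=l+s\tilde{\omega}$ with $l\in(c,\omega)^\perp$ and $s\in\mathbb{R}$. Because $\tilde{\omega}$ is orthogonal to the hyperplane $(c,\omega)^\perp$ and has Euclidean length $|\tilde{\omega}|=\sqrt{1+c^2}$, the Jacobian of this affine parametrization is $|\tilde{\omega}|$, so $\D y=\sqrt{1+c^2}\,\D s\,\D H(l)$.

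Substituting into $\hat f(\zeta)=\int_{\R^{1+n}} f(y)\,e^{-\i y\cdot\zeta}\D y$ and using $\zeta\cdot\tilde{\omega}=0$, the $s$-dependent phase collapses via $e^{-\i s\tilde{\omega}\cdot\zeta}=1$, and Fubini's theorem, justified by $f\in L^1(\mathbb{R}^{1+n})$, yields
\[
\hat f(\zeta)=\sqrt{1+c^2}\int_{(c,\omega)^\perp}e^{-\i l\cdot\zeta}\left(\int_{\mathbb{R}}f(l+s\tilde{\omega})\,\D s\right)\D H(l).
\]
The inner integral is precisely $\tilde L^{0,0}f(l,\omega)$; by translation invariance of Lebesgue measure on $\mathbb{R}$ this quantity is constant along lines in the direction $\tilde{\omega}$, and hence is well defined as a function of $l\in(c,\omega)^\perp$. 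The outer integral is then $\F_{(c,\omega)^\perp}(\tilde L^{0,0}f(-,\omega))(\zeta)$, which produces the claimed identity with constant $\sqrt{1+c^2}$, reducing to the $\sqrt{2}$ stated in the lemma when $c=1$.

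I expect no essential obstacle: the entire proof is a single application of Fubini once the slicing is set up. The only delicate point is the careful bookkeeping of the Jacobian $|\tilde{\omega}|$ arising from the non-unit normalization of $\tilde{\omega}=(c,\omega)$, which is exactly what produces the multiplicative constant in front of the partial Fourier transform.
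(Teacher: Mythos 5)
Your proof is correct and follows the same slicing-plus-Fubini argument the paper uses for the analogous vector-field version of this lemma in Section \ref{sec6} (the scalar case itself is only cited to \cite{ramm1991inverse} without proof). Your bookkeeping of the Jacobian is right and in fact sharper than the statement: since $|\tilde{\omega}|=\sqrt{1+c^2}$, the constant in the lemma should read $\sqrt{1+c^2}$ for general $c>0$, agreeing with the stated $\sqrt{2}$ only when $c=1$.
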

	This is a restricted version of the classical Fourier slice theorem. As a corollary one can prove the uniqueness result for functions.  Given that, $\Tilde{L}^{0,0}f((t,x),\omega)=0$ for all $(t,x)\in \mathbb{R}^{1+n}$ and $\omega \in B_{n}(\omega_0, \epsilon)$, using Lemma \ref{FST}, we deduce $\hat{f}(\zeta)=0$ for $\zeta \perp (c,\omega)$ where $\omega \in B_{n}(\omega_0, \epsilon)$. Now varying $\omega$ in $B_n(\omega_0, \epsilon)$ one can show that the  $\{\zeta \in \R^{1+n} : \zeta \perp (c,\om),\, \om \in B_n(\om_0,\epsilon)\}$ has non-empty interior in $\Rno$, $n\geq 2$.  On the other hand, as $f$ is a compactly supported function, the Paley-Wiener theorem implies $\hat{f}$ is real analytic. As a result, $\hat{f}=0$ everywhere. This completes the proof.

	Our next result shows that the highest order MLRT determines all the lower order MLRTs.
	\begin{lemma}\label{lem:lower_mlrt_detremination}
For any $f^{(m)}  \in C_{c}^{\infty}(\mathbb{R}^{1+n};S^m)$, one has 
\begin{align*}
	\langle \Tilde{\omega} \; , \; \nabla_{t,x} \rangle ^{p}\tilde{L}^{m,k}f^{(m)}((t,x),\om) &=(-1)^p\frac{k!}{(k-p)!}\tilde{L}^{m,k-p}f^{(m)}((t,x),\om), \qum{for $0\le p\le k$}.
\end{align*}
	\end{lemma}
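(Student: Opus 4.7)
The plan is to prove the identity by induction on $p$, with the base case $p=1$ handled by a single integration by parts. The key observation is that the directional derivative $\langle \tilde\omega, \nabla_{t,x}\rangle$ acting on the integrand $f^{(m)}_{i_1\dots i_m}((t,x)+s\tilde\omega)$ coincides with the $s$-derivative, since by the chain rule
\[
\langle \tilde\omega, \nabla_{t,x}\rangle\bigl[f^{(m)}_{i_1\dots i_m}((t,x)+s\tilde\omega)\bigr] = \frac{d}{ds}\bigl[f^{(m)}_{i_1\dots i_m}((t,x)+s\tilde\omega)\bigr].
\]
Since the coefficients $\tilde\omega_{i_1}\cdots\tilde\omega_{i_m}$ and the weight $s^k$ in the definition \eqref{def_mlrt} do not depend on $(t,x)$, one can differentiate under the integral sign to obtain
\[
\langle \tilde\omega, \nabla_{t,x}\rangle \tilde L^{m,k}f^{(m)}((t,x),\omega) = \sum_{i_1,\dots,i_m=0}^{n}\tilde\omega_{i_1}\cdots\tilde\omega_{i_m}\int_{\R}s^k\,\frac{d}{ds}f^{(m)}_{i_1\dots i_m}((t,x)+s\tilde\omega)\,ds.
\]

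Next, I would integrate by parts in $s$. Because $f^{(m)}$ is compactly supported, the boundary terms vanish, and one gets
\[
\int_{\R}s^k\,\frac{d}{ds}f^{(m)}_{i_1\dots i_m}((t,x)+s\tilde\omega)\,ds = -k\int_{\R}s^{k-1}f^{(m)}_{i_1\dots i_m}((t,x)+s\tilde\omega)\,ds,
\]
and repackaging the resulting sum as an MLRT yields the base case
\[
\langle \tilde\omega, \nabla_{t,x}\rangle \tilde L^{m,k}f^{(m)}((t,x),\omega) = -k\,\tilde L^{m,k-1}f^{(m)}((t,x),\omega).
\]
This matches the claimed formula with $p=1$, since $(-1)^1\,k!/(k-1)! = -k$.

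For the inductive step, assume the identity for some $p$ with $1\le p<k$. Applying $\langle \tilde\omega, \nabla_{t,x}\rangle$ once more and using the base case with $k$ replaced by $k-p$,
\[
\langle \tilde\omega, \nabla_{t,x}\rangle^{p+1} \tilde L^{m,k}f^{(m)} = (-1)^p\frac{k!}{(k-p)!}\,\langle \tilde\omega, \nabla_{t,x}\rangle \tilde L^{m,k-p}f^{(m)} = (-1)^{p+1}\frac{k!}{(k-p-1)!}\,\tilde L^{m,k-p-1}f^{(m)},
\]
which completes the induction. There is no real obstacle here: the argument is routine provided one is careful that compact support of $f^{(m)}$ justifies both differentiation under the integral and the vanishing of the boundary terms in the integration by parts; otherwise sufficient decay of $s^j f^{(m)}((t,x)+s\tilde\omega)$ in $s$ would be needed, which is automatic in our Schwartz or $C_c^\infty$ setting.
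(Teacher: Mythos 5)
Your proposal is correct and follows essentially the same route as the paper: the chain-rule identification of $\langle \tilde\omega,\nabla_{t,x}\rangle$ with $\frac{d}{ds}$ on the integrand, followed by integration by parts (with boundary terms vanishing by compact support) to drop the moment by one, iterated $p$ times. The only cosmetic difference is that you organize the iteration as a formal induction on $p$, whereas the paper simply repeats the step $p$ times; the content is identical.
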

			\begin{proof}
			For $p\leq k$ and $\om \in \sn$, the following identity holds
			\begin{equation*}
				\begin{aligned}
					\langle \Tilde{\omega} , \nabla_{t,x} \rangle ^{p}L^{m,k}f^{(m)}((t,x),\om )&=\sum_{i_1,\dots,i_m=0}^{n}\tilde{\omega}_{i_1}\cdots \tilde{\omega}_{i_m}\langle \Tilde{\omega}  , \nabla_{t,x} \rangle ^{p-1}\int_{\mathbb{R}}s^k\langle \tilde{\omega},\nabla_{t,x} \rangle f_{i_1\dots i_m}^{(m)}((t,x)+s\Tilde{\omega})\,ds\\
					&=\sum_{i_1,\dots,i_m=0}^{n}\tilde{\omega}_{i_1}\cdots \tilde{\omega}_{i_m}\langle \Tilde{\omega} \; , \; \nabla_{t,x} \rangle ^{p-1}\int_{\mathbb{R}}s^k \frac{d}{ds}\left[f_{i_1\dots i_m}^{(m)}((t,x)+s\Tilde{\omega})\right]\, ds\\
					&=-\sum_{i_1,\dots,i_m=0}^{n}\tilde{\omega}_{i_1}\cdots \tilde{\omega}_{i_m}\langle \Tilde{\omega} \; , \; \nabla_{t,x} \rangle ^{p-1}k\int_{\mathbb{R}}s^{k-1}f_{i_1\dots i_m}^{(m)}((t,x)+s\Tilde{\omega})\,ds.\\
				\end{aligned}
			\end{equation*}
			We get last equality using integration by parts and the fact that components of $f^{(m)}$ are compactly supported. As $p\leq k$, we can continue this process for $k$-times. After performing this process for $p$ times, we get
			\begin{equation*}
				\begin{aligned}
					\langle \Tilde{\omega} \; , \; \nabla_{t,x} \rangle ^{p}L^{m,k}f^{(m)}((t,x),\om)&=(-1)^p \sum_{i_1,\dots,i_m=0}^{n}\tilde{\omega}_{i_1}\cdots \tilde{\omega}_{i_m}\frac{k!}{(k-p)!}\int_{\mathbb{R}}s^{k-p}f_{i_1\dots i_m}^{(m)}((t,x)+s\Tilde{\omega})ds\\
					&=(-1)^p\frac{k!}{(k-p)!}L^{m,k-p}f^{(m)}((t,x),\om).
				\end{aligned}
			\end{equation*}
		This completes the proof.
		\end{proof}

	Now we discuss an important aspect of symmetric tensors, which resembles the trace-free decomposition. This particular decomposition will be useful in the rest of this article. Before we demonstrate the decomposition, let's define the following operator $J:S^m\to S^{m-2}, $ by 
	\[	[Jf^{(m)}]_{i_1\cdots i_{m-2}}=-c^2f^{(m)}_{i_1\dots i_{m-2}00}+\sum_{p=1}^nf^{(m)}_{i_1\cdots i_{m-2}pp}.
	\]
	We now state a decomposition result using $J$. A somewhat similar decomposition result can be found in \cite{Dairbekov_Sharafutdinov,Paternain_Salo_Uhlmann_2023}.
	\begin{proposition}\label{decompose}
	Assume that, $f^{(m)}\in S^m$ for $m\geq 2$. Then, $f^{(m)}$ can be decomposed in the following way
	\begin{equation}\label{eq_dec}
	f^{(m)}=A^{(m)}+i_{g_{\frac{1}{c}}}f^{(m-2)}, \text{ where } JA^{(m)}=0.
	\end{equation}
	\end{proposition}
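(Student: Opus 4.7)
The plan is to reduce the existence of the decomposition to the pointwise invertibility of a single linear operator. Define $T \coloneqq J \circ i_{g_{1/c}} : S^{m-2} \to S^{m-2}$, acting fiberwise at each point of $\mathbb{R}^{1+n}$. If $T$ is a bijection on this finite-dimensional fiber, then setting $f^{(m-2)} \coloneqq T^{-1}(Jf^{(m)})$ and $A^{(m)} \coloneqq f^{(m)} - i_{g_{1/c}} f^{(m-2)}$ produces the desired decomposition, since $JA^{(m)} = Jf^{(m)} - T(f^{(m-2)}) = 0$. Because $T^{-1}$ is a constant-coefficient algebraic operator, $f^{(m-2)}$ and $A^{(m)}$ inherit the smoothness (and compact support, should we need it later) of $f^{(m)}$.

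To compute $T$ explicitly, I would expand $(i_{g_{1/c}} h)_{i_1 \cdots i_{m-2} j k}$ for $h \in S^{m-2}$ using the normalized permutation sum in the definition of $i_v$, and re-index by the unordered pair $\{a,b\} \subset \{1,\dots,m\}$ labelling the positions that receive the metric factor. Contracting in the last two slots via $J = j_{g_c}$ splits the sum into three cases: (I) $\{a,b\} = \{m-1,m\}$ gives $\operatorname{tr}(g_c g_{1/c})\, h = (n+1)\, h$; (II) $\{a,b\} \subset \{1,\dots,m-2\}$ gives, after collecting $\binom{m-2}{2}$ terms, the contribution $\binom{m-2}{2}\, i_{g_{1/c}}(Jh)$; (III) exactly one of $a,b$ in each group yields $2(m-2)$ copies of $h$ via the inverse-matrix identity $g_c \cdot g_{1/c} = I_{n+1}$. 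Summing and normalizing by $\binom{m}{2}^{-1}$ gives
\begin{equation*}
Th \;=\; \alpha_m\, h + \beta_m\, i_{g_{1/c}}(Jh),\qquad \alpha_m = \frac{2(n+2m-3)}{m(m-1)} > 0,\qquad \beta_m = \frac{(m-2)(m-3)}{m(m-1)} \ge 0.
\end{equation*}

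I would then prove that $T$ is invertible by induction on $m$, strengthening the hypothesis to ``\emph{all eigenvalues of $T$ are strictly positive real numbers.}'' The base cases $m=2,3$ are immediate since $\beta_m = 0$ forces $T = \alpha_m \operatorname{Id}$. For $m \ge 4$, consider $U \coloneqq i_{g_{1/c}} \circ J$ on $S^{m-2}$. Any non-zero eigenvector of $U$ lies in $\operatorname{Range}(i_{g_{1/c}})$, so writing $h = i_{g_{1/c}}(\psi)$ for some $\psi \in S^{m-4}$, the inductive invertibility of $T_{m-2}$ yields injectivity of $i_{g_{1/c}}$ on $S^{m-4}$; from $Uh = i_{g_{1/c}}(T_{m-2}\psi) = \lambda\, i_{g_{1/c}}(\psi)$ one deduces $T_{m-2}\psi = \lambda\psi$, so every non-zero eigenvalue of $U$ is a (positive) eigenvalue of $T_{m-2}$. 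Therefore the spectrum of $T = \alpha_m I + \beta_m U$ is contained in $[\alpha_m,\infty) \subset (0,\infty)$, which completes the induction.

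The main obstacle is precisely this spectral bookkeeping: the implication ``$T_{m-2}$ invertible $\Rightarrow T_m$ invertible'' is not quite automatic, because the naive approach forces one to rule out $-\alpha_m/\beta_m$ being in the spectrum of $T_{m-2}$, and this is only possible by carrying positivity of the spectrum through the induction. An essentially equivalent alternative would be to iterate to obtain the full graded decomposition $S^{m-2} = \bigoplus_{k\ge 0} i_{g_{1/c}}^{k}\bigl(\operatorname{Ker} J \cap S^{m-2-2k}\bigr)$ and observe directly that $T$ acts by a nonzero scalar on each summand.
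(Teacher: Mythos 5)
Your proof is correct, but it is organized quite differently from the paper's. The paper proves the same commutator identity $J i_{g_{1/c}} = D_m\, i_{g_{1/c}} J + C_m\,\mathrm{Id}$ (Lemma \ref{lem_commutator}) and then runs an explicit recursive construction: it expands $f^{(m)} = A^{(m)} + i_{g_{1/c}}\bigl[A^{(m-2)} + i_{g_{1/c}}A^{(m-4)} + \cdots\bigr]$ and solves for the pieces $A^{(m-2p)}$ one at a time, descending in rank until it reaches $f^{(0)}$ or $f^{(1)}$. You instead package the whole problem into the single fiberwise operator $T = J \circ i_{g_{1/c}}$ on $S^{m-2}$, observe that invertibility of $T$ immediately yields the decomposition via $f^{(m-2)} = T^{-1}(Jf^{(m)})$, and prove invertibility by a spectral induction using the same commutator identity. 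Your route is arguably cleaner and more rigorous: the paper's argument is explicitly presented as a ``constructive procedure'' rather than a formal proof, and your spectral bookkeeping (carrying positivity of $\operatorname{spec}(T_{m-2})$ through the induction so that $\operatorname{spec}(T_m) \subset [\alpha_m,\infty)$) is exactly the point the recursive construction glosses over when it asserts that the successive equations for $A^{(m-2p)}$ can always be solved. Your approach also gives uniqueness of the decomposition for free, which the paper does not address. The inductive step is sound: an eigenvector of $U = i_{g_{1/c}}\circ J$ with nonzero eigenvalue lies in the range of $i_{g_{1/c}}$, and injectivity of $i_{g_{1/c}}$ on $S^{m-4}$ (itself a consequence of the invertibility of $T_{m-2}$) transfers the eigenvalue to $T_{m-2}$.

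One further point worth flagging: your constant $\alpha_m = \binom{m}{2}^{-1}(n+2m-3)$ disagrees with the paper's $C_m = \binom{m}{2}^{-1}(n+m-1)$ for $m \ge 3$. A direct check at $m=3$ (where $J i_{g_{1/c}} h = \tfrac{n+3}{3}\,h$ for $h \in S^1$) confirms your value; the paper's count of the cross terms in Lemma \ref{lem_commutator} appears to miss the factor of $2$ coming from the two choices of which contracted slot pairs with a free index. This does not affect the validity of either proof, since both constants are positive, but your derivation is the correct one.
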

	The proof of this decomposition result is based on the following lemma. 
	\begin{lemma}\label{lem_commutator}
Let $n\ge 2$ and   $f^{(m-2)} \in S^{m-2}$ for $m\ge 2$, then the following relations hold.
\begin{itemize}
\item [1.]	\[J\gc f^{(m-2)}= D_{m}\gc Jf^{(m-2)}+ C_mf^{(m-2)},\] 
where $D_m=\binom{m}{2}^{-1}\binom{m-2}{2},$ and $C_m=\binom{m}{2}^{-1}(n+m-1)$.
\item [2.] \[\gc f^{(m-2)}=0 \implies  f^{(m-2)}=0.\]
\end{itemize}
	\end{lemma}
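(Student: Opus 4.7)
For \textbf{Part 1}, the plan is to perform a direct combinatorial expansion. Writing
\[
(\gc f^{(m-2)})_{i_1\dots i_m} = \frac{1}{\binom{m}{2}}\sum_{\{a,b\}\subset\{1,\dots,m\}} f^{(m-2)}_{i_{S^c}}(g_{1/c})_{i_a i_b},
\]
with $S^c = \{1,\dots,m\}\setminus\{a,b\}$, I then apply $J$ by setting $i_{m-1}=i_m=k$ and summing with the diagonal weights $g^{*kk}$ defining $J$, and I partition the sum over $\{a,b\}$ by its overlap with $\{m-1,m\}$. The unique pair of overlap two contributes a multiple of $f^{(m-2)}$ via $\sum_k g^{*kk}(g_{1/c})_{kk}$, exploiting the pointwise identity $g^{*pp}(g_{1/c})_{pp}=1$ for every index $p$. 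Each of the $2(m-2)$ pairs of overlap one collapses, through the Kronecker delta forced by the diagonal structure of $g_{1/c}$ together with the same identity $g^{*pp}(g_{1/c})_{pp}=1$, into a single copy of $f^{(m-2)}_{j_1\dots j_{m-2}}$ after using the symmetry of $f^{(m-2)}$. Finally, each of the $\binom{m-2}{2}$ pairs of overlap zero produces a summand of the form $(g_{1/c})_{j_a j_b}(Jf^{(m-2)})_{\mathrm{rest}}$, and these reassemble into $\binom{m-2}{2}\,\gc(Jf^{(m-2)})$ by the definition of $\gc$ on $S^{m-4}$. Collecting, together with the prefactor $\binom{m}{2}^{-1}$, identifies the coefficient of $\gc Jf^{(m-2)}$ as $D_m$ and the coefficient of $f^{(m-2)}$ as $C_m$.

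For \textbf{Part 2}, the plan is to use the polynomial-ring identification. The linear isomorphism $f\mapsto P_f(x) := \sum_I f_I\,x^I$ between $S^r(\Rno)$ and the space of homogeneous polynomials of degree $r$ in $x_0,\dots,x_n$ intertwines the symmetric product $\odot$ with ordinary polynomial multiplication. Under it, $\gc$ is exactly multiplication by the polynomial
\[
E(x) := -\tfrac{1}{c^2}x_0^2 + x_1^2 + \cdots + x_n^2,
\]
all of whose coefficients are nonzero (since $c > 0$). Because $\Cb[x_0,\dots,x_n]$ is an integral domain, multiplication by the nonzero element $E$ has no nonzero kernel, so $\gc f^{(m-2)} = 0$ forces $f^{(m-2)} = 0$.

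The main obstacle I anticipate is the combinatorial bookkeeping in Part 1, especially for the overlap-one contributions: one must verify that each of the $2(m-2)$ such pairs yields exactly one copy of $f^{(m-2)}_{j_1\dots j_{m-2}}$, and then combine this count with the overlap-two contribution to obtain the stated $C_m$. Part 2 is then a short algebraic consequence of the non-degeneracy of $g_{1/c}$.
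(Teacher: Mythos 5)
Your Part 1 takes the same route as the paper: expand $\gc f^{(m-2)}$ over the $\binom{m}{2}$ index pairs, apply $J$, and sort the pairs by their overlap with the two contracted slots. The problem is that your own (correct) bookkeeping does not produce the stated constant. There are indeed $2(m-2)$ overlap-one pairs, and each does collapse to one full copy of $f^{(m-2)}_{i_1\dots i_{m-2}}$, as you say; adding the $(n+1)$ copies coming from the single overlap-two pair, the coefficient of $f^{(m-2)}$ comes out to $\binom{m}{2}^{-1}(n+2m-3)$, which equals $C_m=\binom{m}{2}^{-1}(n+m-1)$ only when $m=2$. So the final ``collecting'' step, where you assert the total is $C_m$, is exactly where the argument fails to close. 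A direct check at $m=3$ confirms your count rather than the stated formula: for $f^{(1)}\in S^1$ one has $(\gc f^{(1)})_{ijk}=\tfrac13\big((g_{\frac{1}{c}})_{ij}f_k+(g_{\frac{1}{c}})_{ik}f_j+(g_{\frac{1}{c}})_{jk}f_i\big)$, and a short computation gives $J\gc f^{(1)}=\tfrac{n+3}{3}\,f^{(1)}$, whereas the lemma asserts $\tfrac{n+2}{3}\,f^{(1)}$. (The paper's own proof lists only $m-2$ surviving cross terms, i.e.\ it counts the pairs $\{a,m-1\}$ and $\{a,m\}$ once instead of twice; your count of $2(m-2)$ is the right one.) You should therefore not try to force the total to be $C_m$ but record the corrected constant $\binom{m}{2}^{-1}(n+2m-3)$; note that the only property of $C_m$ used later (in Proposition \ref{decompose}) is that it is nonzero, so the correction is harmless downstream, but as written your proof does not establish the identity with the stated $C_m$.

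Your Part 2 is correct and genuinely different from the paper's. The paper proves injectivity of $\gc$ by evaluating components index by index: taking all free indices equal to $0$ gives $f^{(m-2)}_{0\cdots0}=0$, and one then inducts on the number of nonzero indices. Your argument --- identify $S^r$ with homogeneous degree-$r$ polynomials so that $\odot$ becomes multiplication, observe that $\gc$ is then multiplication by the quadratic $-c^{-2}x_0^2+x_1^2+\cdots+x_n^2$, and invoke that $\Cb[x_0,\dots,x_n]$ is an integral domain --- is shorter and more conceptual. The only hypothesis you actually need is that this quadratic is not the zero polynomial; the remark that all of its coefficients are nonzero is superfluous.
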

	\begin{proof}
		 For fixed indices $i_1,i_2,\dots,i_{m-2} = 0,\cdots,n$,
		\begin{equation*}
			\begin{aligned}
				\Big[J\gc f^{(m-2)} \Big]_{i_1\dots i_{m-2}}&=-c^2\Big[\gc f^{(m-2)} \Big]_{i_1\dots i_{m-2}00}+\sum_{k=1}^n\Big[\gc f^{(m-2)} \Big]_{i_1\dots i_{m-2}kk}\\
				&=-c^2\binom{m}{2}^{-1}\underbrace{\Big[g_{i_1i_2}f^{(m-2)}_{i_3\dots i_{m-2}00}+\cdots +g_{i_{m-1}i_m}f^{(m-2)}_{i_1\dots i_{m-2}} \Big]}_{i_{m-1}=i_m=0}\\
				&\quad+\sum_{k=1}^{n}\binom{m}{2}^{-1}\underbrace{\Big[g_{i_1i_2}f^{(m-2)}_{i_3\dots i_{m-2}kk}+\cdots +g_{i_{m-1}i_m}f^{(m-2)}_{i_1\dots i_{m-2}} \Big]}_{i_{m-1}=i_m=k}.
			\end{aligned}
		\end{equation*}
		
The last term in the first bracket is equal to 
\[
-c^2 \binom{m}{2}^{-1} g_{00} f^{(m-2)}_{i_1 \dots i_{m-2}}
= \binom{m}{2}^{-1} f^{(m-2)}_{i_1 \dots i_{m-2}}.
\]
A similar calculation shows that the last term in each of the other brackets is also equal to 
		\(\binom{m}{2}^{-1} f^{(m-2)}_{i_1 \dots i_{m-2}}\). 
		Therefore, these terms contribute a total of 
		\((n+1)\binom{m}{2}^{-1} f^{(m-2)}\).
		Next, we carefully handle the remaining terms in each bracket.  
		First, consider those terms where \(i_{m-1} = i_m = 0\), so that they remain with \(f^{(m-2)}\). 
		In the first bracket, their contribution is 
		\[
		-c^2 \binom{m}{2}^{-1} \Big[
		g_{i_1 i_2}(f^{(m-2)}_{00})_{i_3 \dots i_{m-2}} 
		+ \cdots 
		+ g_{i_{m-3} i_{m-2}}(f^{(m-2)}_{00})_{i_1 \dots i_{m-4}}
		\Big].
		\]
		For the other brackets, the same reasoning yields 
		\[
		\sum_{k=1}^{n} \binom{m}{2}^{-1} \Big[
		g_{i_1 i_2}(f^{(m-2)}_{kk})_{i_3 \dots i_{m-2}} 
		+ \cdots 
		+ g_{i_{m-3} i_{m-2}}(f^{(m-2)}_{kk})_{i_1 \dots i_{m-4}}
		\Big].
		\]
		Together, these two expressions contribute 
		\[
		\binom{m-2}{2} \binom{m}{2}^{-1} 
		\big[ \gc J f^{(m-2)} \big]_{i_1 \dots i_{m-2}}.
		\]
		
		Now we are left with the following terms:
		\[
		\begin{aligned}
			&-c^2 \binom{m}{2}^{-1} 
			\Big[ g_{i_1 0} f^{(m-2)}_{i_2 \dots i_{m-2} 0} 
			+ \cdots 
			+ g_{i_{m-2} 0} f^{(m-2)}_{i_1 \dots i_{m-3} 0} \Big] 
			+ \cdots \\ 
			&\quad + \sum_{k=1}^{n} \binom{m}{2}^{-1} 
			\Big[ g_{i_1 k} f^{(m-2)}_{i_2 \dots i_{m-2} k} 
			+ \cdots 
			+ g_{i_{m-2} k} f^{(m-2)}_{i_1 \dots i_{m-3} k} \Big].
		\end{aligned}
		\]
		
		Rewriting, we obtain
		\[
		\begin{aligned}
			&= \binom{m}{2}^{-1} \Big[ -c^2 g_{i_1 0} f^{(m-2)}_{i_2 \dots i_{m-2} 0} 
			+ \sum_{k=1}^n g_{i_1 k} f^{(m-2)}_{i_2 \dots i_{m-2} k} \Big] + \cdots \\
			&\quad + \binom{m}{2}^{-1} \Big[ -c^2 g_{i_{m-2} 0} f^{(m-2)}_{i_1 \dots i_{m-3} 0} 
			+ \sum_{k=1}^n g_{i_{m-2} k} f^{(m-2)}_{i_1 \dots i_{m-3} k} \Big].
		\end{aligned}
		\]
		By the definition of \(g\), in each bracket exactly one term survives, with coefficient \(+1\). Hence, the total contribution is
		\[
		(m-2)\binom{m}{2}^{-1} f^{(m-2)}_{i_1 \dots i_{m-2}}.
		\]
		
		Putting everything together, we obtain
		\[
		J \gc f^{(m-2)} 
		= \binom{m-2}{2}\binom{m}{2}^{-1} \gc J f^{(m-2)} 
		+ \binom{m}{2}^{-1} (m+n-1) \,f^{(m-2)}.
		\]
		This completes the proof of the first part.
		
		For the second part, we prove the claim by considering particular choices of indices.  
		First, let all indices vanish, i.e.\ \(i_1, \dots, i_{m-2} = 0\). Then  
		\[
		\big[ \gc f^{(m-2)} \big]_{0 \cdots 0} = 0 
		\;\;\implies\;\; f^{(m-2)}_{0 \cdots 0} = 0.
		\]
		
		Next, allow exactly one index to be nonzero. Without loss of generality, let \(i_1 \neq 0\) and all others vanish. Then  
		\[
		\big[ \gc f^{(m-2)} \big]_{i_1 0 \cdots 0} = 0 
		\;\;\implies\;\; f^{(m-2)}_{i_1 0 \cdots 0} = 0.
		\]
		By the symmetry of \(f^{(m-2)}\), every component with exactly one nonzero index must vanish. Proceeding inductively, increasing the number of nonzero indices at each step, the same argument shows that all corresponding components vanish. Hence, every component of \(f^{(m-2)}\) is zero, completing the proof.
	\end{proof}
With this lemma at hand, we are now ready to present the proof of Proposition \ref{decompose}.
		\begin{proof} [Proof of Proposition \ref{decompose}]
			What follows is intended not merely as a formal proof of the decomposition, but rather as a constructive procedure that explicitly demonstrates how it can be achieved. With this perspective in mind, let us begin with
			the assumption that the decomposition \eqref{decompose} can be done. So we can rewrite $f^{(m)}$ in the following form
			\[ \label{1st}
			f^{(m)}=A^{(m)}+i_{g_{\frac{1}{c}}}\Big[\underbrace{ A^{(m-2)}+i_{g_{\frac{1}{c}}}A^{(m-4)}+\cdots+i_{g_{\frac{1}{c}}}A^{(m-2k+1)}+i_{g_{\frac{1}{c}}}^{k-1}f^{(m-2k)}}_{B^{(m-2)}}\Big], \text{ where }\, k=[\frac{m}{2}].
			\]
			We now utilize the condition $JA^{(m)}=0$, which implies $Jf^{(m)}=Ji_{g_{\frac{1}{c}}}B^{(m-2)}.$ This along with the commutator relation stated in Lemma \ref{lem_commutator} implies
			\[
			\begin{aligned}
				Jf^{(m)}=&Ji_{g_{\frac{1}{c}}}B^{(m-2)}
				=&D_{m}i_{g_{\frac{1}{c}}}JB^{(m-2)}+C_mB^{(m-2)}.
			\end{aligned}
			\]
	Let us consider $C_mA^{(m-2)}=Jf^{(m)}$. Note that, for symmetric tensor fields of rank 2 and 3, we are done at this step. Now for $m\geq 4$  from above equation we obtain 
			\[
			\begin{aligned}
				0=&D_mi_{g_{\frac{1}{c}}}J\Big[\underbrace{ A^{(m-2)}+i_{g_{\frac{1}{c}}}A^{(m-4)}+\cdots+i_{g_{\frac{1}{c}}}^{k-1}f^{(m-2k)}}_{B^{(m-2)}}\Big]+C_{m}i_{g_{\frac{1}{c}}}\Big[\underbrace{ A^{(m-4)}+\cdots+i_{g_{\frac{1}{c}}}^{k-2}f^{(m-2k)}}_{B^{(m-4)}}\Big].
			\end{aligned}
			\]
			Next using  the second relation of Lemma \ref{lem_commutator} $i.e., \gc f^{(m-2)}=0 \implies  f^{(m-2)}=0$ from above we conclude that 
			\begin{align*}
				0 =& D_m\,JA^{(m-2)}+D_mJi_{g_{\frac{1}{c}}}{B^{(m-4)}}+C_mB^{(m-4)}\\
					=&D_mJA^{(m-2)}+D'_{m}i_{g_{\frac{1}{c}}}JB^{(m-4)}+ C_m' B^{(m-4)}.
			\end{align*} 
			where $D_m'$ and $C_m'$ are some constant depends on $m$ and $n$. Next we choose  $A^{(m-4)}$ in such way that  $D_m JA^{(m-2)}+C_m' A^{(m-4)}=0$. Again notice that, at this point we are done for symmetric tensor fields of rank 4 and 5. We shall proceed for $m\geq6$. This gives
			\[
			\begin{aligned}
				0=&D_m'i_{g_{\frac{1}{c}}}J B^{(m-4)}+C_m' i_{g_{\frac{1}{c}}}\Big[ \underbrace{A^{(m-6)}+\cdots +i_{g_{\frac{1}{c}}}^{k-3}f^{(m-2k)}}_{B^{(m-6)}}\Big].
			\end{aligned}
			\]
			We now iterate this process and apply Lemma \ref{lem_commutator} to obtain   $A^{(m-2p)}$, for $p=3,\cdots,(k-1)$ until we end up with either $f^{(1)}$ or $f^{(0)}$ depending on whether $m$ is odd or even. Substituting these forms of $A^{(m-2p)}$, for $p=1,\cdots,(k-1)$ and $f^{(0)}$ or $f^{(1)}$ in \eqref{1st}, we reach our desired decomposition \eqref{eq_dec}. The proof is complete.
		\end{proof}
	\begin{remark}
One might ask how this decomposition differs from the usual trace-free decomposition of a symmetric tensor field, to be specific, whether \( J \) and \( j_{g_{\frac{1}{c}}} \) are equal. The answer is negative; however, \( J \) and \( j_{g_{\frac{1}{c}}} \) do coincide in the special case \( c = 1 \).
	\end{remark}

	\section{Injectivity of MLRT}\label{sec:injectivity with full data}
	This section is divided into three parts. First part contains injectivity of MLRT when the direction vectors $\om$ are taken from $(\R^{n}\setminus\{0\})$. The second part demonstrates the injectivity of MLRT provided the direction vectors vary in $\sn$. Finally, the third part studies MLRT when the direction vectors are in an open neighborhood of a fixed direction in $\sn$. For the first case, we prove that the highest moment MLRT of any symmetric tensor field is injective when there is no restriction on the direction vectors. In the second part, MLRT is no longer injective for higher order symmetric tensors. We determine a necessary and sufficient condition for MLRT to be injective when the direction vectors are in $\sn$. For the last part, we prove that the kernel for MLRT coincides with the kernel in the second case for dimension $n \geq 3$, though for $n=2$ they are different.
	
	\subsection{Direction vector in $(\R^n\setminus\{0\})$}\label{Sub-section3.1}
	Here, we study injectivity results of MLRT for symmetric $m$ tensor-fields when the direction vector $\omega$ varies in $\mathbb{R}^{n}\setminus\{0\}$. This allows us to differentiate MLRT with respect to $\omega$. Similar ideas for MRT can  be found in \cite{krishnan2018momentum}.
	
	\begin{lemma}\label{gen1}
		Let $m\geq 1$, $n\geq 2$, $f^{(m)} \in C_{c}^{\infty}(\mathbb{R}^{1+n};S^m) $ and fix an index $p \in \{1,\dots,n\}$. Then the following equality holds
		\begin{equation}\label{rankreducer_1}
			mL^{m-1,k}(f^{(m)})_p =\partial_{\omega_p}L^{m,k}f^{(m)}-\partial_{x_p}L^{m,k+1}f^{(m)} \quad \text{for} \quad k=0,1,\dots,m-1,
		\end{equation}
		where $(f^{(m)})_p$ is a symmetric tensor field of rank $(m-1)$ defined as follows
		\[
		((f^{(m)})_p)_{i_1\dots i_{m-1}}=f^{(m)}_{i_1\dots i_{m-1}p}, \quad \mbox{for} \quad i_1,\cdots,i_{m-1} = 0,\cdots,n.
		\]
	\end{lemma}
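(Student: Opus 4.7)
The plan is to prove the identity by differentiating the definition \eqref{def_mlrt} of $L^{m,k}f^{(m)}$ directly with respect to $\omega_p$ and then reorganizing via the product rule and the chain rule. The key elementary observation is that since $p \in \{1,\dots,n\}$, we have $\tilde{\omega}_j = \omega_j$ for $j \ge 1$ while $\tilde{\omega}_0 = c$ is constant in $\omega$, so $\partial_{\omega_p}\tilde{\omega}_j = \delta_{jp}$ for every $j \in \{0,\dots,n\}$. Similarly, the chain rule gives
\[
\partial_{\omega_p}\bigl[f^{(m)}_{i_1\dots i_m}((t,x)+s\tilde{\omega})\bigr] = s\,(\partial_{x_p}f^{(m)}_{i_1\dots i_m})((t,x)+s\tilde{\omega}),
\]
since only the $p$-th spatial slot of the argument depends on $\omega_p$, producing a factor of $s$. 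Differentiation under the integral sign is justified throughout by the compact support of $f^{(m)}$.

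Applying $\partial_{\omega_p}$ to $L^{m,k}f^{(m)}$ via the product rule then yields two contributions. The first comes from hitting the coefficient $\tilde{\omega}_{i_1}\cdots\tilde{\omega}_{i_m}$: each factor contributes a term of the form $\delta_{i_j p}\,\tilde{\omega}_{i_1}\cdots\widehat{\tilde{\omega}_{i_j}}\cdots\tilde{\omega}_{i_m}$, and after summing over $j=1,\dots,m$ the full symmetry of $f^{(m)}_{i_1\dots i_m}$ collapses all $m$ contributions into the single expression $m\,L^{m-1,k}(f^{(m)})_p$, using the definition $((f^{(m)})_p)_{i_1\dots i_{m-1}} = f^{(m)}_{i_1\dots i_{m-1}p}$. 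The second contribution comes from differentiating inside the integrand; by the chain rule computation above, this produces an integral with $s^{k+1}$ in place of $s^k$ and $\partial_{x_p}f^{(m)}_{i_1\dots i_m}$ in place of $f^{(m)}_{i_1\dots i_m}$, which after pulling $\partial_{x_p}$ outside the integral equals $\partial_{x_p}L^{m,k+1}f^{(m)}$.

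Combining the two pieces gives $\partial_{\omega_p}L^{m,k}f^{(m)} = m\,L^{m-1,k}(f^{(m)})_p + \partial_{x_p}L^{m,k+1}f^{(m)}$, and rearranging yields \eqref{rankreducer_1}. There is no real obstacle here: the only point requiring a little care is the symmetry argument that collapses the $m$ terms from the first contribution into a single one, but this is pure bookkeeping once the definition of $(f^{(m)})_p$ is used. Note that the restriction $p \ge 1$ is essential, since for $p=0$ one would instead be differentiating in a constant variable (the $0$-th slot of $\tilde{\omega}$), so the identity in this form is special to the spatial indices.
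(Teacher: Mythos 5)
Your proposal is correct and follows essentially the same route as the paper's proof: differentiate under the integral sign, apply the product rule so that the coefficient factors $\tilde{\omega}_{i_1}\cdots\tilde{\omega}_{i_m}$ contribute $m\,L^{m-1,k}(f^{(m)})_p$ by symmetry, while the chain rule applied to the integrand (via $\partial_{\omega_p}(x+s\omega)=s\,\partial_{x_p}(x+s\omega)$) yields $\partial_{x_p}L^{m,k+1}f^{(m)}$. The bookkeeping and the rearrangement at the end match the paper's argument exactly.
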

		\begin{proof}
			For any $(x,\omega)\in \Rn \times (\Rn\setminus \{0\})$, we have that $\PD_{\omega_p} (x+s\omega)= s\, \PD_{x_p} (x+s\omega)$. Using this we obtain 
			\begin{align*}
				\partial_{\omega_p}L^{m,k}f^{(m)} &= \PD_{\omega_p} \left( \sum_{i_{1},\dots,i_{m}=0}^{n}\Tilde{\omega}_{i_{1}}\cdots \Tilde{\omega}_{i_{m}}\int_{\mathbb{R}} s^k\,f_{i_{1}\dots i_{m}}^{(m)}((t,x)+s\Tilde{\omega})\D s\right)\\
				&= \sum_{i_1,\dots,i_m=0}^{n}\sum_{j=1}^m\tilde{\om}_{i_1}\cdots \delta_{i_j}^p\cdots\tilde{\om}_{i_m}\int_{\R}s^k\Big((f^{(m)})_p\Big)_{i_1\dots i_{j-1}i_{j+1}\dots i_m}((t,x)+s\tilde{\om})ds\\&+ \sum_{i_1,\dots,i_m=0}^{n}\tilde{\om}_{i_1}\cdots \tilde{\om}_{i_m}\int_{\R}s^{k+1}\PD_{x_p}f^{(m
					)}_{i_1\dots i_m}((t,x)+s\tilde{\om})ds\\
				&= m L^{m-1,k} (f^{(m)})_p+\PD_{x_p} L^{m,k+1} f^{(m)}.
			\end{align*}
			This completes the proof.
		\end{proof}
	
	\begin{proof}[\textbf{Proof of Theorem \ref{thm:whole_inj_MLRT2}}]
		For $m=0$, the result is classical, see \cite{stefanov1989inverse,ramm1991inverse}. For $m=1$ by assumption, we have that $L^{1,1} f^{(1)}=0$. This along with Lemma \ref{lem:lower_mlrt_detremination} implies that $ L^{1,k} f^{(1)}=0$ for $k=0,1$. Next using Lemma \ref{gen1} we obtain $L^{0,0}(f^{(1)})_p=0$ for $1\le p\le n$. This along with injectivity of the LRT over scalar functions implies $(f^{(1)})_p=0$ for $1\le p\le n$. Inserting this into $ L^{1,0} f^{(1)}=0$ entails $L^{0,0} (f^{(1)})_0=0$, which further gives $ (f^{(1)})_0=0$. As a result, we have $f^{(1)}=0$.
		
	We next argue by induction and assume that the result is true for $m-1\ge 0$.  Then for $m$, by \eqref{rankreducer_1}, we obtain \( L^{m-1,k}(f^{(m)})_p = 0 \) for \( k = 0, 1, \ldots, m-1 \), and $1\le p\le m$. By induction we have that $(f^{(m)})_p =0$ for $1\le p\le n$. Using the symmetry of $f^{(m)}$, we have that each element of  $f^{(m)}$ is zero except $ f^{(m)}_{00 \cdots 0}$. This can be achieved using the fact that $L^{m,0}f^{(m)}=0$ and  $(f^{(m)})_p =0$ for $1\le p\le n$. This completes the proof. 
	\end{proof}
	\subsection{Direction vector restricted on the sphere:}\label{Sub-section3.2}
	In this subsection, we study the injectivity of MLRT when the direction vector $\om$ varies on the sphere. We see that, the kernel description of MLRT of symmetric tensor fields changes significantly from the case where $\om \in \R^{n}\backslash \{0 \}$. We start with the observation that, symmetric tensor products of a symmetric tensor fields with the Minkowski metric tensor ($g_\frac{1}{c}$) always lie in the kernel. The following lemma describes it properly.
	\begin{lemma}\label{backward}
	Let $m\ge 2$ and $n\ge 2$. 	 Then the following equality holds
		\[  
		\tilde{L}^{m,k}\Big(i_{g_{\frac{1}{c}}}f^{(m-2)}\Big)=0 \qum{for any} \quad f^{(m-2)}\in C_c^{\infty}(\R^{1
			+n};S^{m-2}).
		\]
		\begin{proof}
			Consider $(t,x) \in \R^{1+n}$ and $\om \in \S^{n-1}$
			\begin{equation}
				\begin{split}
					\tilde{L}^{m,k}(i_{g_\frac{1}{c}}f^{(m-2)})((t,x),\om) &=\sum_{i_1,\dots , i_m=0}^{n}\tilde{\om}_{i_1}\cdots \tilde{\om}_{i_m}\int_{\R}s^k (i_{g_{\frac{1}{c}}}
					f^{(m-2)})_{i_1\cdots i_m}((t,x)+s\tilde{\om})ds\\
					& =\sum_{i_1,\dots , i_m=0}^{n}\tilde{\om}_{i_1}\cdots \tilde{\om}_{i_m}\int_{\R}s^k \sum_{\pi \in \Pi_{m}} f^{(m-2)}_{i_{\pi(1)}\cdots i_{\pi(m-2)}}(g_{\frac{1}{c}})_{i_{\pi(m-1)}i_{\pi(m)}}ds\\
					&=\sum_{i_1,\dots , i_m=0}^{n}\tilde{\om}_{i_1}\cdots \tilde{\om}_{i_m} \sum_{\pi \in \Pi_{m}}(g_{\frac{1}{c}})_{i_{\pi(m-1)}i_{\pi(m)}}\int_{\R}s^kf^{(m-2)}_{i_{\pi(1)}\cdots i_{\pi(m-2)}}ds.
				\end{split}
			\end{equation}
			For any fixed $\pi \in \Pi_m$, $\pi(m-1), \pi(m)$ will be in $\{1,2,\dots,m\}$. Without loss of generality assume that, $	i_{\pi(m-1)}=i_1;\quad i_{\pi(m)}=i_2.$ This gives
			\[
			\tilde{L}^{m,k}(i_{g_\frac{1}{c}}f^{(m-2)})((t,x),\om) =\sum_{\pi \in \Pi_m}\sum_{i_3,\dots , i_m=0}^{n}\tilde{\om}_{i_3}\cdots \tilde{\om}_{i_m}[\sum_{i_1,i_2=0}^{n}\tilde{\om}_{i_1}\tilde{\om}_{i_2}(g_{\frac{1}{c}})_{i_1i_2}]\int_{\R}s^kf^{(m-2)}_{i_{\pi(1)}\cdots i_{\pi(m-2)}}ds.
			\]
			Recall that $\tilde{\omega}=(c, \omega)$, this implies  $\sum_{i_1,i_2=0}^{n}\tilde{\om}_{i_1}\tilde{\om}_{i_2}(g_{\frac{1}{c}})_{i_1i_2}=0.$ Therefore, 
            \[\tilde{L}^{m,k}(i_{g_\frac{1}{c}}f^{(m-2)})((t,x),\om) =0.\] 
		\end{proof}
	\end{lemma}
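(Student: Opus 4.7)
The plan is to unwind the definition of the MLRT applied to $i_{g_{1/c}} f^{(m-2)}$ and reduce it to a product in which the factor $\sum_{i,j=0}^{n} \tilde{\omega}_{i} \tilde{\omega}_{j}\, (g_{1/c})_{ij}$ appears. The key observation is that, with the convention $\tilde{\omega} = (c, \omega)$ and $g_{1/c} = \mathrm{diag}(-1/c^{2}, 1, \dots, 1)$, this scalar equals $c^{2}\cdot(-1/c^{2}) + |\omega|^{2} = -1 + |\omega|^{2}$, which vanishes precisely because $\omega \in \mathbb{S}^{n-1}$. In other words, the ``light cone condition'' $|\omega| = 1$ is exactly what makes $\tilde{\omega}$ a null vector for the Minkowski metric $g_{1/c}$.

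Concretely, I would substitute the explicit formula
\[
(i_{g_{1/c}} f^{(m-2)})_{i_{1}\cdots i_{m}}
= \frac{1}{m!}\sum_{\pi \in \Pi_{m}} f^{(m-2)}_{i_{\pi(1)}\cdots i_{\pi(m-2)}}\, (g_{1/c})_{i_{\pi(m-1)} i_{\pi(m)}}
\]
into the definition \eqref{def_mlrt} of $\tilde{L}^{m,k}$ and interchange the finite permutation sum with the sums over $i_{1},\dots,i_{m}$ and the $s$-integral. For any fixed $\pi \in \Pi_{m}$, relabel the dummy indices so that the pair carrying $g_{1/c}$ is $(i_{m-1}, i_{m})$; this is legitimate because each factor $\tilde{\omega}_{i_{j}}$ is the same regardless of which index it is attached to. The resulting summand factors as
\[
\Bigl(\sum_{i_{m-1}, i_{m}=0}^{n} \tilde{\omega}_{i_{m-1}} \tilde{\omega}_{i_{m}} (g_{1/c})_{i_{m-1} i_{m}}\Bigr) \cdot \Bigl(\sum_{i_{1},\dots,i_{m-2}=0}^{n} \tilde{\omega}_{i_{1}}\cdots \tilde{\omega}_{i_{m-2}} \int_{\mathbb{R}} s^{k} f^{(m-2)}_{i_{1}\cdots i_{m-2}}((t,x)+s\tilde{\omega})\,ds\Bigr),
\]
where the second factor is (up to the overall $1/m!$ and the permutation-count) nothing but $\tilde{L}^{m-2,k} f^{(m-2)}((t,x),\omega)$.

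Since the first factor vanishes for every $\omega \in \mathbb{S}^{n-1}$, the entire expression is zero regardless of the value of the second factor; hence $\tilde{L}^{m,k}(i_{g_{1/c}} f^{(m-2)})((t,x),\omega) = 0$ for all $(t,x) \in \mathbb{R}^{1+n}$ and all $\omega \in \mathbb{S}^{n-1}$. There is no real obstacle here; the only point requiring care is the symmetrization bookkeeping, which amounts to observing that after contracting with $m$ copies of $\tilde{\omega}$, each of the $m!$ permutations produces the same scalar multiple of the vanishing pairing $\tilde{\omega}^{\top} g_{1/c}\, \tilde{\omega}$, so it does not matter which two indices are assigned to $g_{1/c}$. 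This also clarifies, as a side comment, why the hypothesis $\omega \in \mathbb{S}^{n-1}$ is essential in Lemma \ref{backward}: on $\mathbb{R}^{n}\setminus\{0\}$ the pairing $\tilde{\omega}^{\top} g_{1/c}\, \tilde{\omega}$ is generally nonzero, which is exactly why the kernel description in Theorem \ref{thm:whole_inj_MLRT2} is trivial whereas the ones in Theorems \ref{thm_fullsph} and \ref{thm_partsph} include the image of $i_{g_{1/c}}$.
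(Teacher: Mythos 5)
Your proposal is correct and follows essentially the same route as the paper's own proof: substitute the symmetrization formula for $i_{g_{1/c}}f^{(m-2)}$, relabel the dummy indices so that the pair contracted with $g_{1/c}$ factors out as $\sum_{i,j}\tilde{\omega}_i\tilde{\omega}_j(g_{1/c})_{ij}=-1+|\omega|^2$, and conclude that this vanishes for $\omega\in\mathbb{S}^{n-1}$. Your added remarks on the $1/m!$ bookkeeping and on why the restriction to the sphere is essential are accurate but not needed for the argument.
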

	The above lemma provides proof of the converse parts of both Theorem \ref{thm_fullsph} and Theorem \ref{thm_partsph}.
	\begin{proof}[\textbf{Proof of Theorem \ref{thm_fullsph}}]
		\,Let $\widehat{f}$ denotes the Fourier transform of $f$ component-wise in  the $t$ variable.
		By definition of the Fourier transform,
		\begin{align*}
			\widehat{\tilde{L}^{m,k}f^{(m)}}(\tau,x,\omega)
			&= \sum_{i_1,\dots ,i_m=0}^{n}\int\limits_{\mathbb{R}} \int\limits_{\mathbb{R}} s^k\, e^{-\i t \tau} f^{(m)}_{i_1\cdots i_m}(t+sc,x+s\omega) \,\Tilde{\omega}_{i_1}\cdots \Tilde{\omega}_{i_m} ds\,  dt\\
			&=\sum_{i_1,\dots ,i_m=0}^{n} \int\limits_{\mathbb{R}} s^k\,\int\limits_{\mathbb{R}} e^{-\i (r-sc) \tau} f^{(m)}_{i_1\cdots i_m}(r,x+s\omega) \,\Tilde{\omega}_{i_1}\cdots \Tilde{\omega}_{i_m} dr\,  ds\\
			&=\sum_{i_1,\dots ,i_m=0}^{n} \int\limits_{\mathbb{R}} s^k e^{\i sc \tau} \widehat{f^{(m)}_{i_1\cdots i_m}}(\tau,x+s\omega) \,\Tilde{\omega}_{i_1}\cdots \Tilde{\omega}_{i_m}  ds.
		\end{align*}
	Thus, we arrive at
		\begin{equation}\label{main_relation}
			\widehat{\tilde{L}^{m,k}f^{(m)}}(\tau,x,\omega)=\sum_{i_1,\dots ,i_m=0}^{n}\int\limits_{\mathbb{R}} s^k\, e^{\i sc \tau} \widehat{f^{(m)}_{i_1\cdots i_m}}(\tau,x+s\omega)\, \Tilde{\omega}_{i_1}\cdots \Tilde{\omega}_{i_m}  ds.
		\end{equation}
		We use the decomposition result from  Lemma \ref{decompose} to write $f^{(m)} = A+i_{g_{\frac{1}{c}}}f^{(m-2)}$ where $JA=0$, and deduce
		\begin{align}\label{Fourier_tra_l_g}
			\widehat{\tilde{L}^{m,k}f^{(m)}}(\tau,x,\omega)=\widehat{\tilde{L}^{m,k}A}(\tau,x,\omega)=\sum_{i_1,\dots ,i_m=0}^{n}\int\limits_{\mathbb{R}} s^k\, e^{\i sc \tau} \hat{A}_{i_1\cdots i_m}(\tau,x+s\omega) \,\Tilde{\omega}_{i_1}\cdots \Tilde{\omega}_{i_m}  ds .
		\end{align}
		Since $\tilde{L}^{m,k}f^{(m)} =0$ implies $ \tilde{L}^{m,k}A=0$, therefore, $\widehat{\tilde{L}^{m,k}A}(0,x,\omega)=0$ for $x\in \R^{n}$, $\omega \in \S^{n-1}$.
		Now, using symmetry of $A$, we rewrite the identity $\tilde{L}^{m,k}A=0$ as
		\begin{equation}\label{step_1}
			\begin{aligned}
				0= \widehat{\tilde{L}^{m,k}A}(0,x,\omega)
				&=\sum_{i_1,\dots ,i_m=0}^{n}\int\limits_{\mathbb{R}} s^k\, \hat{A}_{i_1\cdots i_m}(0,x+s\omega)\, \Tilde{\omega}_{i_1}\cdots \Tilde{\omega}_{i_m}  ds\\
				&=\sum\limits_{l=0}^m\sum_{i_1,\dots ,i_l=1}^{n} \int\limits_{\mathbb{R}}c^{m-l} s^k\,  \binom{m}{l}\,\hat{A}^{(l)}_{i_1 \cdots i_l}(0,x+s\omega)\,\omega_{i_1}\cdots \omega_{i_l} \, ds.
		\end{aligned}\end{equation}
		Here $\hat{A}^{(l)}_{i_1 \cdots i_l}:= \hat{A}_{i_1\cdots i_l\underbrace{0\dots 0}_{m-l} }$ is a symmetric $l$ tensor in $\mathbb{R}^n$, and for $l=0$, $\hat{A}^{(0)} = \hat{A}_{0\cdots 0}$ is a function. To this end, we define two operators $i_{\delta}:S^m\rightarrow S^{m+2} $ and $j_{\delta}:S^{m}\rightarrow S^{m-2}$ as follows:
\begin{align}
(i_{\delta}f )_{i_1 \cdots i_{m+2}}&\coloneqq \sigma(f_{i_1\cdots i_m} \otimes \delta_{i_{m+1}i_{m+2}})\label{def_of_i}\\
(j_{\delta} f)_{i_1\cdots i_{m-2}}&\coloneqq \sum\limits_{k=1}^n f_{i_1\cdots i_{m-2}kk},\label{def_of_j}
\end{align}
where $\sigma$ denotes the symmetrization of a tensor field. Here $\delta_{ij}$ is the Kronecker delta tensor which is equal to $1$ for $i=j$ and $0$ otherwise.
Replacing $\omega$ by $-\omega$ in the above equation and adding/subtracting it with the original one, we obtain
\begin{align}\label{eq_1.5}
0=&  \widehat{\tilde{L}^{m,k}A}(0,x,\omega)\pm \widehat{\tilde{L}^{m,k}A}(0,x,-\omega)\nonumber \\
&=\sum\limits_{l=0}^m \binom{m}{l}c^{m-l}\sum_{i_1,\dots ,i_l=1}^{n} \int\limits_{\mathbb{R}} s^k\, \,\left( \hat{A}_{i_1\cdots i_l}^{(l)} \pm (-1)^{l-k}\hat{A}_{i_1\cdots i_l}^{(l)}\right) (0,x+s\omega)\,\omega_{i_1}\cdots \omega_{i_l} \, ds.
\end{align}
		
In the rest of the proof we consider $m$ is even, say $2p$. The proof for $m$ odd will follow similarly. To proceed further, let us consider the symmetric tensor fields:
		\begin{align*}
			G_{i_1\cdots i_{2p}}^+(0,x)&= \sum\limits_{l=0}^{p} \binom{2p}{2l} c^{2p-2l} i^{p-l}_{\delta}\hat{A}_{i_1\cdots i_{2l}}^{(2l)}(0,x),\\
			G_{i_1\cdots i_{2p-1}}^-(0,x)&=  \sum\limits_{l=1}^{p}  \binom{2p}{2l-1} c^{2p-2l+1}i^{p-l}_{\delta}\hat{A}_{i_1\cdots i_{2l-1}}^{(2l-1)}(0,x).
		\end{align*}
		Now, for each $k=0,1,\cdots,m$, the identity \eqref{eq_1.5} reads
		\begin{align*}
			\int\limits_{\mathbb{R}} s^k\,  G_{i_1\cdots i_{2p}}^+(0,x+s\omega) \,\omega_{i_1}\cdots \omega_{i_{2p}} \, ds&=0, \qquad \text{ [for $(+)$ve sign] },\\
			\int\limits_{\mathbb{R}} s^k\,  G_{i_1\cdots i_{2p-1}}^-(0,x+s\omega) \,\omega_{i_1}\cdots \omega_{i_{2p-1}} \, ds&=0, \qquad \text{ [for $(-)$ve sign] }.
		\end{align*}
		Now injectivity of MRT (see \cite[Theorem 2.17.2]{Sharafutdinov_book} and \cite{krishnan2018momentum,BKS_MRT_polyharmonic}) gives $ G^{\pm}=0$ in $ \mathbb{R}^n$, which implies 
		\begin{align*}
			\sum\limits_{l=0}^{p} \binom{2p}{2l} c^{2p-2l}  i^{p-l}_{\delta}\hat{A}_{i_1\cdots i_{2l}}^{(2l)}=0 \quad \implies \,
			\hat{A}_{i_1\cdots i_{2p}}^{(2p)}=- \sum\limits_{l=0}^{p-1}  \binom{2p}{2l}  c^{2p-2l} i^{p-l}_{\delta}\hat{A}_{i_1\cdots i_{2l}}^{(2l)}.
		\end{align*}
		Now we consider the following inner product 
		\begin{equation}\label{eq_inner_prod}
			\begin{aligned}
				-\left\langle \hat{A}_{i_1\cdots i_{2p}}^{(2p)} , \hat{A}_{i_1\cdots i_{2p}}^{(2p)} \right\rangle
				&=\left\langle \hat{A}_{i_1\cdots i_{2p}}^{(2p)} ,  \sum\limits_{l=0}^{p-1} \binom{2p}{2l} c^{2(p-l)} i^{p-l}_{\delta}\hat{A}_{i_1\cdots i_{2l}}^{(2l)} \right\rangle
				\\&= \sum\limits_{l=0}^{p-1}\left\langle j^{p-l}_{\delta} \binom{2p}{2l} c^{2(p-l)} \hat{A}_{i_1\cdots i_{2p}}^{(2p)} ,   \hat{A}_{i_1\cdots i_{2l}}^{(2l)} \right\rangle.
			\end{aligned}
		\end{equation}
		The condition $JA^{(m)}=0$, entails 
		\begin{align*}
			j_{\delta} \hat{A}^{(2p)}_{i_1\cdots i_{2p}}(0,x)=  {c^2} \hat{A}^{(2p-2)}_{i_1\cdots i_{2p-2}}(0,x).
		\end{align*}
		Using the above equality, we finally obtain
		\begin{align}\label{eq_1.9}
			j^{l}_{\delta} \hat{A}^{(2p)}_{i_1\cdots i_{2p}}(0,x) = {c^{2l}} \hat{A}^{(2p-2l)}_{i_1\cdots i_{2p-2l}}(0,x)={c^{2l}}  \hat{A}^{(m-2l)}_{i_1\cdots i_{m-2l}}(0,x).
		\end{align}
		Combining \eqref{eq_inner_prod}, and \eqref{eq_1.9} we obtain
		\begin{equation}\label{step_2}
			\begin{aligned}
				\langle \hat{A}_{i_1\cdots i_{2p}}^{(2p)} , \hat{A}_{i_1\cdots i_{2p}}^{(2p)} \rangle +    \sum\limits_{l=0}^{p-1}
				\binom{2p}{2l}c^{4(p-l)}\langle \hat{A}_{i_1\cdots i_{2l}}^{(2l)} ,\hat{A}_{i_1\cdots i_{2l}}^{(2l)} \rangle&=0.
			\end{aligned}
		\end{equation}
		Since $\binom{2p}{2l}c^{4(p-l)} >0$, the above equation implies \[ \hat{A}^{(2p-2l)}_{i_1\cdots i_{2p-2l}}(0,x)=0,\quad \text{ for } 0\le l \le p.\] Now we repeat the same analysis for $G^-$ and obtain \[ \hat{A}^{(2p-2l+1)}_{i_1\cdots i_{2p-2l+1}}(0,x)=0 \quad \text{ for } 1\le l \le p.\] This implies $ \hat{A}(0,x)=0$. 
		
		We next argue by induction and show that  $ \PD^{n_1}_{\tau} \hat{A}_{i_1\cdots i_m}(0,x)$ $=0 $ for all $n_1\in \mathbb{N}$.
		Differentiating \eqref{Fourier_tra_l_g} with respect to $\tau$ for $n_1$ times, we obtain
		\begin{align}\label{p_derivative_on_lg_hat}
			\PD^{n_1}_{\tau}  \widehat{\tilde{L}^{m,k}A}(\tau,x,\omega)= \sum\limits_{l=0}^{n_1} \int\limits_{\mathbb{R}} s^k\,  (\i sc )^{n_1-l} \binom{n_1}{l}\, e^{\i sc \tau}\, \PD^l_{\tau} \hat{A}_{i_1\cdots i_m}(\tau,x+s\omega)\, \Tilde{\omega}_{i_1}\cdots \Tilde{\omega}_{i_m}  ds. 
		\end{align}
		By assumption of Theorem \ref{thm_fullsph} we have $ \PD^{n_1}_{\tau}  \widehat{\tilde{L}^{m,k}A}(\tau,x,\omega)|_{\tau=0}=0$ for all $n_1\in \Nt$. 
		For $n_1=1$ \eqref{p_derivative_on_lg_hat} implies 
		\begin{align*}
			0= \PD^{1}_{\tau}  \widehat{\tilde{L}^{m,k}A}(0,x,\omega)
			&= \int\limits_{\mathbb{R}} s^k\,    e^{\i sc \tau}\, \PD^1_{\tau} \hat{A}_{i_1\cdots i_m}(0,x+s\omega) \,\Tilde{\omega}_{i_1}\cdots \Tilde{\omega}_{i_m} ds.
		\end{align*}
		Repeating the analysis in \eqref{step_1}-- \eqref{step_2} for $\hat{A}_{i_1\cdots i_m}(0,\cdot)$ replaced by $\PD^1_{\tau}\hat{A}_{i_1\cdots i_m}(0,\cdot)$, we obtain $\PD_{\tau} \hat{A}(0,x)=0$, for all $x\in \R^n$. So our statement is true for $n_1=1$.
		Now assume that, $\PD^{l}_{\tau}\hat{A}(0,x)$ $=0$ for $l\leq (n_1-1)$ and for all $x\in \R^n$, then from \eqref{p_derivative_on_lg_hat} we obtain
		\begin{align*}
			0= \PD^{n_1}_{\tau}  \widehat{\tilde{L}^{m,k}A}(\tau,x,\omega)
			=\int\limits_{\mathbb{R}} s^k\,    e^{\i sc \tau}\, \PD^{n_1}_{\tau} \hat{A}_{i_1\cdots i_m}(0,x+s\omega)\, \Tilde{\omega}_{i_1}\cdots \Tilde{\omega}_{i_m} ds.    
		\end{align*}
		Using \eqref{step_1}-- \eqref{step_2} for $\PD^{n_1}_{\tau}\hat{A}_{i_1\cdots i_m}(0,\cdot)$, the above integral identity entails $\PD^{n_1}_{\tau} \hat{A}_{i_1\cdots i_m}(0,x)=0 $ for all $x\in \R^n$. Thus by induction we have $ \PD^{n_1}_{\tau} \hat{A}_{i_1\cdots i_m}(0,x)$ $=0 $ for all $n_1 \in \Nt$ for all $x\in \R^n$. Since $A$ is compactly supported in $t$ variable, by Paley-Wiener theorem $ \hat{A}(\tau,x)$ is analytic in $\tau$ and $ \PD^{n_1}_{\tau}\hat{A}(0,x)=0$ implies $ \hat{A}(\tau,x)=0$ for every $\tau \in \R$. Thus $A(t,x)=0$ for all $(t,x)\in \R^{1+n}$. Hence $f^{(m)} =i_{g_{\frac{1}{c}}}f^{(m-2)}$. This completes the proof.
	\end{proof}
	\subsection{Direction vector near a fixed  vector on $\sn$:}\label{Sub-section3.3}
	So far, we have proved injectivity results for MLRT of symmetric $m$-tensor fields where the direction vectors lie in \(\mathbb{R}^{n} \setminus \{0\}\) or on $\S^{n-1}$. Now, we restrict our focus to the case when the direction vectors come from an open neighborhood in $\sn$, $n \geq 3$ of a fixed unit vector $\om_0 \in \sn$. Given $\om_0 \in\sn$ and $\delta>0$, recall the set 
	\[
	B_n(\om_0,\delta) = \{ \om \in \sn : |\om-\om_0|<\delta\}.
	\]
	
	\begin{definition}\label{tanggrad}\cite{pressley2010elementary}
		Let \( S \subset \mathbb{R}^{n} \) be an open and bounded set with a smooth boundary denoted by \( \Gamma \), and let \( f : \Gamma \to \mathbb{R} \) be a smooth function on \( \Gamma \). The tangential gradient of \( f \) on the surface \( \Gamma \) is defined by
		\begin{equation}
			\nabla_{\Gamma} f \coloneqq \left[\nabla F - \left( \frac{\partial F}{\partial \nu} \right) \nu\right] \Big|_{\Gamma},
		\end{equation}
		where \( F \) is a smooth extension of \( f \) onto \( \mathbb{R}^{n} \), and \( \nu \) is the normal vector to \( \Gamma \). This definition is independent of the choice of the extension \( F \). 
	\end{definition}
	\begin{remark}
			Since in our situation the embedded sub-manifold is $\sn\subset \Rn$, the tangential gradient is given by 
		\begin{equation*}
		\nabla_S f = \nabla_{\omega} F - (\omega\cdot \nabla_{\omega} F)\om, \quad \mbox{where} \quad \omega \in \sn.
		\end{equation*}
		Note that these tangent vectors are not linearly independent as they satisfy the following relation $ \omega\cdot \nabla_S =0$. 
	\end{remark}
	Let us start the discussion for symmetric 2-tensor fields.
	\begin{lemma}\label{lemma2.7}
		Assume that $f^{(2)}\in C^{\infty}_{c}(\mathbb{R}^{1+n};S^{2})$ and $n\geq 2$. If 
		$
		\Tilde{L}^{2,k}f^{(2)}((t,x), \omega) =0 $, for all $ \omega \in \Bdl,\, (t,x) \in \mathbb{R}^{1+n}\,\text{and }\, k = 0,1,2,
		$
		then the following identity holds
		\begin{equation}\label{relation1}
			\Tilde{L}^{1,k}(f^{(2)})_i=-c\omega_{i}\Tilde{L}^{1,k}(f^{(2)})_{0}, \quad\text{for } i=1,2,\dots,n,
		\end{equation} 
		where $(f^{(2)})_{i}=(f^{(2)}_{0i},\dots,f^{(2)}_{ni})$, the i-th column of the associated matrix.
		\begin{proof}
			We start with taking tangential gradient (in the $\om$ variable) of $\Tilde{L}^{2,k}f^{(2)}((t,x),-)$ over $ B_{n}(\omega_0,\delta)$. So the definition of tangential gradient entails
			\begin{equation}
				\nabla_{S}\Tilde{L}^{2,k}f^{(2)}((t,x),\om)=\Big[\nabla_{\omega}L^{2,k}f^{(2)}((t,x),\om)-(\omega \cdot \nabla_{\omega}L^{2,k}f^{(2)}((t,x),\om))\omega\Big ],
			\end{equation}
			where $L^{m,k}f^{(2)}((t,x),-)$ plays the role of extension of $\tilde{L}^{(m,k)}f^{(2)}((t,x),-)$ onto $\R^n\setminus\{0\}$ and  $\nabla_{\omega}$ denotes gradient in $\omega $ variables. Thus the $i$-th component of the tangential gradient is 
			\begin{equation*}
				\begin{aligned}
					(\nabla_{S}\Tilde{L}^{2,k}f^{(2)}((t,x),\om))_i &= \partial_{\omega_i}L^{2,k}f^{(2)}((t,x),\om)-\omega_{i}\Big[\sum_{j=1}^{n}\om_j\partial_{\omega_j}L^{2,k}f^{(2)}((t,x),\om)\Big]\\
					& =2L^{1,k}(f^{(2)})_i((t,x),\om) +\partial_{x_i}L^{2,k+1}f^{(2)}((t,x),\om)\\&\qquad-\omega_{i}\Big [\sum_{j=1}^{n}\omega_j [2L^{1,k}(f^{(2)})_j ((t,x),\om)+\partial_{x_j}L^{2,k+1}f^{(2)}((t,x),\om)]\Big].
				\end{aligned}
			\end{equation*}
We next observe that	
\[  (\nabla_{S}\Tilde{L}^{2,k}f^{(2)}((t,x),\om)) =0,
\quad \mbox{and} \quad 
\PD_t \Tilde{L}^{2,k}f^{(2)}((t,x),\om)=0, 
\quad \mbox{and} \quad
\nabla_x\Tilde{L}^{2,k}f^{(2)}((t,x),\om)=0,\]  since $	\Tilde{L}^{2,k}f^{(2)}((t,x), \omega) =0 $, for all $ \omega \in \Bdl$, and $(t,x) \in \mathbb{R}^{1+n}\,\text{and }\, k = 0,1,2$. This along with preceding equation entails
			\begin{equation*}
				\begin{aligned}
				&	\Tilde{L}^{1,k}(f^{(2)})_i((t,x),\om)=\omega_i \sum_{j=1}^{n}\omega_{j}\Tilde{L}^{1,k}(f^{(2)})_{j}((t,x),\om)\\
					&\quad=\om_i\sum_{i_1,j=0}^n\tilde{\om}_{i_1}\tilde{\om}_j\int_{\R}s^k f^{(2)}_{i_1j}((t,x)+s\tilde{\om})ds - c\, \om_{i}\sum_{i_1=0}^n\tilde{\om}_{i_1}\int_{\R}s^kf^{(2)}_{i_10}((t,x)+s\tilde{\om})ds\\
					&\quad=\om_i \tilde{L}^{2,k}f^{(2)}((t,x),\om)-c\om_i \tilde{L}^{1,k}(f^{(2)})_0((t,x),\om)= -c\om_i \tilde{L}^{1,k}(f^{(2)})_0((t,x),\om).
				\end{aligned}
			\end{equation*} 
			To obtain the last relation we have again used the fact that $ \Tilde{L}^{2,k}f^{(2)}((t,x),\om)=0$ for $k=0,1$. This completes the proof.
		\end{proof}
	\end{lemma}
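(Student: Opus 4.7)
My plan is to exploit the fact that the vanishing of $\tilde L^{2,k}f^{(2)}$ on the open patch $\R^{1+n}\times B_n(\omega_0,\delta)$ implies vanishing of any tangential derivative on $\sn$ as well as of any derivative in $t$ or $x$; the rank-lowering identity of Lemma \ref{gen1} then converts $\omega$-derivatives of $L^{2,k}f^{(2)}$ into expressions involving the rank-one transforms $L^{1,k}(f^{(2)})_p$ plus an $x$-derivative of $L^{2,k+1}f^{(2)}$. Working with the hypothesis at three consecutive levels $k=0,1,2$ makes these residual $x$-derivative terms drop out, after which a simple rewriting of $\tilde L^{2,k}f^{(2)}$ as a linear combination of $L^{1,k}(f^{(2)})_0,\ldots,L^{1,k}(f^{(2)})_n$ closes the argument.

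Concretely, I would extend $\tilde L^{2,k}f^{(2)}$ to $L^{2,k}f^{(2)}$ for $\omega\in\R^{n}\setminus\{0\}$ and compute the tangential gradient on $\sn$ via Definition \ref{tanggrad}. Its $i$-th component equals
\[
\partial_{\omega_i}L^{2,k}f^{(2)}-\omega_i\sum_{j=1}^{n}\omega_j\,\partial_{\omega_j}L^{2,k}f^{(2)},
\]
and vanishes on $\R^{1+n}\times B_n(\omega_0,\delta)$. Applying Lemma \ref{gen1} with $m=2$ substitutes $2L^{1,k}(f^{(2)})_p+\partial_{x_p}L^{2,k+1}f^{(2)}$ for each $\partial_{\omega_p}L^{2,k}f^{(2)}$. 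Since $\tilde L^{2,k+1}f^{(2)}$ also vanishes on this patch for $k=0,1$ (invoking the hypothesis at levels $k=1$ and $k=2$), every $x$-derivative that appears vanishes as well, and the identity collapses to
\[
L^{1,k}(f^{(2)})_i=\omega_i\sum_{j=1}^{n}\omega_j\,L^{1,k}(f^{(2)})_j.
\]

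The final step is purely algebraic: separating the last summation index in $\tilde L^{2,k}f^{(2)}=\sum_{i_1,i_2=0}^{n}\tilde\omega_{i_1}\tilde\omega_{i_2}\int_{\R}s^{k}f^{(2)}_{i_1 i_2}((t,x)+s\tilde\omega)\,ds$ into the $0$-th component (contributing $c\,L^{1,k}(f^{(2)})_0$) and the spatial components (contributing $\sum_{j=1}^{n}\omega_j L^{1,k}(f^{(2)})_j$) gives
\[
\tilde L^{2,k}f^{(2)}=c\,L^{1,k}(f^{(2)})_0+\sum_{j=1}^{n}\omega_j L^{1,k}(f^{(2)})_j.
\]
Combined with $\tilde L^{2,k}f^{(2)}=0$ this yields $\sum_{j=1}^{n}\omega_j L^{1,k}(f^{(2)})_j=-c\,L^{1,k}(f^{(2)})_0$, and substituting back into the previous display produces the claimed relation $\tilde L^{1,k}(f^{(2)})_i=-c\omega_i\tilde L^{1,k}(f^{(2)})_0$ for $i=1,\ldots,n$. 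The whole argument is a direct computation; the only place that requires care is the simultaneous bookkeeping of the level-$k$ and level-$(k+1)$ hypotheses, which is precisely what is needed to annihilate the by-product $\partial_{x_p}L^{2,k+1}f^{(2)}$ arising from Lemma \ref{gen1}.
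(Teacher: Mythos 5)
Your proposal is correct and follows essentially the same route as the paper's proof: take the tangential gradient of $\tilde{L}^{2,k}f^{(2)}$ on $B_n(\omega_0,\delta)$, use Lemma \ref{gen1} to convert the $\omega$-derivatives into $2L^{1,k}(f^{(2)})_p+\partial_{x_p}L^{2,k+1}f^{(2)}$, kill the $x$-derivative terms using the hypothesis at levels $k+1$, and then split the last index of $\tilde{L}^{2,k}f^{(2)}=0$ to identify $\sum_{j=1}^n\omega_jL^{1,k}(f^{(2)})_j$ with $-c\,L^{1,k}(f^{(2)})_0$. The only (immaterial) difference is presentational: you isolate the identity $\tilde{L}^{2,k}f^{(2)}=c\,L^{1,k}(f^{(2)})_0+\sum_{j}\omega_jL^{1,k}(f^{(2)})_j$ as a separate step before substituting, whereas the paper performs the same algebra inline.
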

	\begin{remark}
		The above lemma provides a relation between MLRT of any column and MLRT of the $0$-th column of the associated matrix. We exploit this relation in the upcoming results. Moreover, we see that a similar relation holds for symmetric $m$-tensors as well.
	\end{remark}
	\begin{lemma}\label{preresult_1}
		Let  $f^{(2)}\in C_{c}^{\infty}(\mathbb{R}^{1+n};S^2)$ and assume that 	
        \[
		\Tilde{L}^{2,k}f^{(2)}((t,x),\omega)=0, \quad\text{for all } \omega \in \Bdl,\, (t,x) \in \mathbb{R}^{1+n} \, \text{and } k=0,1,2,
		\]
		then the following identity holds
		\begin{equation}
			\sum_{j=1}^{n}\Tilde{L}^{0,k}(f^{(2)})_{ii}-\sum_{i,j=1}^{n}\omega_{i} \omega_{j}\Tilde{L}^{0,k}(f^{(2)})_{ij}=c(1-n)\Tilde{L}^{1,k}(f^{(2)})_0, \quad\text{for } k=0,1.
		\end{equation}
	\end{lemma}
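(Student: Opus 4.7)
The strategy is to take the tangential divergence over $\S^{n-1}$ (in the $\omega$ variable) of the vector identity supplied by Lemma \ref{lemma2.7}. Set
\[
H_i^{(k)} := \tilde{L}^{1,k}(f^{(2)})_i + c\,\omega_i\,\tilde{L}^{1,k}(f^{(2)})_0, \qquad i = 1,\ldots,n,
\]
so that Lemma \ref{lemma2.7} gives $H_i^{(k)}\equiv 0$ on $\mathbb{R}^{1+n}\times \Bdl$ for $k=0,1$. Since each $H_i^{(k)}$ vanishes identically on $\Bdl$, all of its tangential derivatives vanish there; in particular, $\sum_{i=1}^{n}(\nabla_S)_i H_i^{(k)}=0$ on $\Bdl$.

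Using Lemma \ref{gen1} (applied to the vector fields $(f^{(2)})_j$ for $j=0,1,\ldots,n$) together with the elementary relation $\partial_{\omega_q}L^{0,k}F = \partial_{x_q}L^{0,k+1}F$, I would expand $H_i^{(k)} = c\,L^{0,k}f^{(2)}_{0i} + \sum_{j=1}^{n}\omega_j L^{0,k}f^{(2)}_{ij} + c\,\omega_i L^{1,k}(f^{(2)})_0$ and compute
\[
\sum_{i=1}^{n}\partial_{\omega_i}H_i^{(k)} = \Phi^{(k)} + \sum_{i=1}^{n}\partial_{x_i}H_i^{(k+1)}, \quad \Phi^{(k)} := \sum_{i=1}^{n}L^{0,k}f^{(2)}_{ii} + c(n+1)L^{1,k}(f^{(2)})_0 - c^2 L^{0,k}f^{(2)}_{00}.
\]
The radial correction $\sum_i \omega_i(\omega\cdot\nabla_\omega)H_i^{(k)}$ on $\Bdl$ is handled via the auxiliary identity $\sum_i \omega_i H_i^{(k)} = L^{2,k}f^{(2)} + c(|\omega|^2-1)L^{1,k}(f^{(2)})_0$, which vanishes on $\S^{n-1}$ and whose radial derivative reduces through $(\omega\cdot\nabla_\omega)L^{2,k}f^{(2)}|_{\Bdl} = -2c L^{1,k}(f^{(2)})_0$ (coming from $\sum_j\omega_j L^{1,k}(f^{(2)})_j = -c L^{1,k}(f^{(2)})_0$ and $\tilde{L}^{2,k+1}f^{(2)}=0$ via Lemma \ref{gen1}) to zero. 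Thus $\sum_i \partial_{\omega_i}H_i^{(k)}|_{\Bdl} = 0$, i.e., $\Phi^{(k)} + \sum_i \partial_{x_i}H_i^{(k+1)} = 0$ on $\Bdl$.

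For $k=0$, the residual sum vanishes identically on $\Bdl$ since Lemma \ref{lemma2.7} at $k+1=1$ gives $H_i^{(1)}\equiv 0$ there; hence $\Phi^{(0)}=0$ on $\Bdl$. Combining this with the direct consequence $\sum_{i,j=1}^{n}\omega_i\omega_j L^{0,k}f^{(2)}_{ij} = c^2 L^{0,k}f^{(2)}_{00} - 2c\,L^{1,k}(f^{(2)})_0$ of $\tilde{L}^{2,k}f^{(2)}=0$ then produces the claimed identity at $k=0$.

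I expect the main obstacle to be the case $k=1$, since Lemma \ref{lemma2.7} is not directly available at $k=2$ under the present hypothesis, so that $\sum_i\partial_{x_i}H_i^{(2)}$ is not automatically zero on $\Bdl$. To close this gap, I would rerun the tangential-gradient argument of Lemma \ref{lemma2.7}'s proof on $\tilde{L}^{2,2}f^{(2)}=0$ to express $H_i^{(2)}|_{\Bdl} = \tfrac{1}{2}\bigl[\omega_i(\omega\cdot\nabla_x) - \partial_{x_i}\bigr]L^{2,3}f^{(2)}$, and then exploit the iterated identity $\langle\tilde{\omega},\nabla_{t,x}\rangle^{p}L^{2,3}f^{(2)} = (-1)^{p}\tfrac{3!}{(3-p)!}L^{2,3-p}f^{(2)}$ ($p=1,2$, from Lemma \ref{lem:lower_mlrt_detremination}), which vanishes on $\Bdl$, to trade spatial for temporal derivatives; the sum $\sum_i\partial_{x_i}H_i^{(2)}|_{\Bdl}$ should then collapse via a wave-operator-type cancellation, completing the argument.
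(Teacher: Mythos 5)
Your strategy---taking the tangential divergence of the identity $H_i^{(k)}:=\tilde{L}^{1,k}(f^{(2)})_i+c\,\omega_i\tilde{L}^{1,k}(f^{(2)})_0\equiv 0$ from Lemma \ref{lemma2.7} and converting $\omega$-derivatives into lower-order transforms via Lemma \ref{gen1}---is exactly the paper's, and your $k=0$ case is complete and correct; your explicit treatment of the radial correction through $\sum_i\omega_iH_i^{(k)}=L^{2,k}f^{(2)}+c(|\omega|^2-1)L^{1,k}(f^{(2)})_0$ is in fact more careful than the paper's, which absorbs those terms silently. The identities $\sum_i\partial_{\omega_i}H_i^{(k)}=\Phi^{(k)}+\sum_i\partial_{x_i}H_i^{(k+1)}$ and $\sum_{i,j}\omega_i\omega_j\tilde{L}^{0,k}(f^{(2)})_{ij}=c^2\tilde{L}^{0,k}(f^{(2)})_{00}-2c\,\tilde{L}^{1,k}(f^{(2)})_0$, and the resulting equivalence of the claimed identity with $\Phi^{(k)}=0$, all check out.

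The gap is at $k=1$, exactly where you flagged it, and your proposed repair does not close it. Your formula $H_i^{(2)}=\tfrac12\bigl[\omega_i(\omega\cdot\nabla_x)-\partial_{x_i}\bigr]L^{2,3}f^{(2)}$ on $\Bdl$ is correct, but it yields $\sum_i\partial_{x_i}H_i^{(2)}=\tfrac12\bigl[(\omega\cdot\nabla_x)^2-\Delta_x\bigr]L^{2,3}f^{(2)}$, and the transport identity $\langle\tilde{\omega},\nabla_{t,x}\rangle L^{2,3}f^{(2)}=-3L^{2,2}f^{(2)}=0$ only converts this into $\tfrac12\bigl[c^2\partial_t^2-\Delta_x\bigr]L^{2,3}f^{(2)}=\tfrac12\,L^{2,3}\bigl((c^2\partial_t^2-\Delta_x)f^{(2)}\bigr)$, i.e.\ the \emph{third} moment of the compactly supported tensor field $(c^2\partial_t^2-\Delta_x)f^{(2)}$. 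The hypotheses control only the moments $k=0,1,2$ of $f^{(2)}$ (hence of every constant-coefficient derivative of it) and say nothing about any third moment; the operator $(\omega\cdot\nabla_x)^2-\Delta_x$ is minus the Laplacian transverse to $\omega$, which no amount of trading $\omega\cdot\nabla_x$ for $-c\partial_t$ will annihilate. So the hoped-for ``wave-operator-type cancellation'' does not occur and this term remains unaccounted for. For context, the paper's own proof has the same issue in disguise: it substitutes $\tilde{L}^{1,k+1}(f^{(2)})_i=-c\,\omega_i\tilde{L}^{1,k+1}(f^{(2)})_0$ inside the tangential gradient, which at $k=1$ invokes the moment-$2$ analogue of Lemma \ref{lemma2.7}---a relation whose derivation consumes $\tilde{L}^{2,k+1}f^{(2)}=0$ and is therefore only available for moments $0$ and $1$; the discrepancy is precisely your $H_i^{(2)}$. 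Closing the $k=1$ case thus needs either the extra hypothesis $\tilde{L}^{2,3}f^{(2)}=0$ or a genuinely different argument, so this is a real gap in your proposal, though not one the paper's written proof resolves either.
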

	\begin{proof}
		From the given hypothesis and Lemma~\ref{lemma2.7}, we have
		\begin{equation}\label{relation1_1}
			\Tilde{L}^{1,k}(f^{(2)})i = -c\,\omega_{i}\Tilde{L}^{1,k}(f^{(2)})_{0},
			\quad \text{for $k=0,1$} \qum{and $1\le i\le n$}.
		\end{equation}
		We now apply the tangential gradient over $B_{n}(\omega_0,\delta)$ to both sides of this relation. To do so, we compute the tangential gradient of the left-hand side and the right-hand side separately, and then compare the results.
We now apply the tangential gradient on the L.H.S of \eqref{relation1_1} and deduce that

\begin{equation*}
\begin{aligned}
&	(	\nabla_{S}(\Tilde{L}^{1,k}(f^{(2)})_{i})_p\\&=\Big[\partial_{\omega_p}(L^{1,k}(f^{(2)})_{i})-\omega_{p}\sum_{j=1}^{n}\omega_{j}\partial_{\omega_j}(L^{1,k}(f^{(2)})_{i})\Big]\\
&= \Tilde{L}^{0,k}(f^{(2)})_{ip}+\partial_{x_p}(\Tilde{L}^{1,k+1}(f^{(2)})_{i})-\omega_{p}\sum_{j=1}^{n}\omega_{j}[\Tilde{L}^{0,k}(f^{(2)})_{ij}+\partial_{x_j}(\Tilde{L}^{1,k+1}(f^{(2)})_{i})]\\
&=\Tilde{L}^{0,k}(f^{(2)})_{ip}-\omega_{i}\partial_{x_p}(\Tilde{L}^{1,k+1}(f^{(2)})_0)-
\omega_p \sum_{j=1}^{n}\omega_{j}[\Tilde{L}^{0,k}(f^{(2)})_{ij}-  \omega_{i}\partial_{x_j}(\Tilde{L}^{1,k+1}(f^{(2)})_0)].
\end{aligned}
\end{equation*}
Furthermore, choosing  $p=i$ in above and taking  sum over $i=1,\cdots,n$ we obtain 
\begin{align}\label{eq_3.1}
			\sum_{i=1}^n 	(	\nabla_{S}(\Tilde{L}^{1,k}(f^{(2)})_{i})_i= \sum_{i=1}^{n}\Tilde{L}^{0,k}(f^{(2)})_{ii}-\sum_{i,j=1}^{n}\omega_{i} \omega_{j}\Tilde{L}^{0,k}(f^{(2)})_{ij}.
		\end{align}
		Similarly applying $\nabla_S$ on the R.H.S of \eqref{relation1_1} we obtain 
		\begin{equation*}
			\begin{aligned}
			-& (\nabla_{S}\,(c\,\omega_{i}\Tilde{L}^{1,k}(f^{(2)})_{0}))_p\\	=&c\Big[\omega_p \sum_{j=1}^{n}\omega_{j}\partial_{\omega_j}(\omega_{i}L^{1,k}(f^{(2)})_0)-\partial_{\omega_p}(\omega_i L^{1,k}(f^{(2)})_0)\Big]\\
				=&c\Big[\omega_p \sum_{j=1}^{n}\omega_j [\delta_i^jL^{1,k}(f^{(2)})_0+\omega_i \partial_{\omega_j}(L^{1,k}(f^{(2)})_0)\Big] \\
				&-\Big[\delta_i^p \Tilde{L}^{1,k}(f^{(2)})_0+\omega_i \partial_{\omega_p}(L^{1,k}(f^{(2)})_0)\Big]\\
		=&c\Big[\omega_p\sum_{j=1}^{n}\omega_j \delta_j^i \Tilde{L}^{1,k}(f^{(2)})_0 -\delta_i^p \Tilde{L}^{1,k}(f^{(2)})_0+\omega_p\sum_{j=1}^{n}\omega_i \omega_j\Tilde{L}^{0,k}(f^{(2)})_{0j}-\omega_i \Tilde{L}^{0,k}(f^{(2)})_{0p}\\
				&+\omega_p \sum_{j=1}^{n}\omega_i \omega_j \partial_{x_j}(\Tilde{L}^{1,k+1}(f^{(2)})_0 )-\omega_i \partial_{x_p}(\Tilde{L}^{1,k+1}(f^{(2)})_0)\Big].
			\end{aligned}
		\end{equation*}
		Next choosing $p=i$ and summing over $i=1,\cdots,n$ we obtain
		\begin{align}\label{eq_3.2}
-\sum_{i=1}^n(\nabla_{S}\, (c\,\omega_{i}\Tilde{L}^{1,k}(f^{(2)})_{0}))_i= c(1-n)\, \Tilde{L}^{1,k}(f^{(2)})_0.
		\end{align}
	We next equate \eqref{eq_3.1} and \eqref{eq_3.2} and conclude  that
		\begin{equation}\label{help1}
			\sum_{i=1}^{n}\Tilde{L}^{0,k}(f^{(2)})_{ii}-\sum_{i,j=1}^{n}\omega_{i} \omega_{j}\Tilde{L}^{0,k}(f^{(2)})_{ij}=c(1-n)\Tilde{L}^{1,k}(f^{(2)})_0, \quad \text{ for } k=0,1.
		\end{equation}
		This completes the proof.
	\end{proof}

    The relation \eqref{help1} provides an essential tool for analyzing the injectivity of the MLRT in this setup and is also useful for the reconstruction algorithm. We are now ready to present the proof of Theorem \ref{thm_partsph}. 
	\begin{proof}[\textbf{Proof of Theorem \ref{thm_partsph}}]
		The proof of the sufficient part is guaranteed by Lemma \ref{backward}. So we proceed with the proof of the forward part. We divide the proof into several steps.\smallskip
		
		\textbf{Step 1.} The case for $m=0$ and $m=1$.
	
		Given that, $\Tilde{L}^{0,0}f((t,x),\omega)=0$ for all $(t,x)\in \mathbb{R}^{1+n}$ and $\omega \in B_{n}(\omega_0, \delta)$, using the result \ref{FST}, we get $\hat{f}(\zeta)=0$ for $\zeta \perp (c,\omega)$ where $\omega \in B_{n}(\omega_0, \delta)$. It is easy to show that the set $\{\zeta \in \R^{1+n} : \zeta \perp (c,\om),\, \om \in B_n(\om_0,\delta)\}$ is an open set. On the other hand, as $f$ is a compactly supported function, the Paley-Wiener theorem implies $\hat{f}$ is real analytic. Therefore, $\hat{f}=0$, which completes the proof for $m=0$. For $m=1$ we have the $f^{(1)}= \nabla \phi$, since $ \Tilde{L}^{1,0}f^{(1)}=0$, see \cite{krishnan2020inverse}. Next we substitute this form of $f^{(1)}$ in $\Tilde{L}^{1,1}f^{(1)}=0$, we shall get $\Tilde{L}^{0,0}\phi=0$. This together with \cite[Proposition 2.2]{ramm1991inverse} implies $\phi=0$. Consequently, $f^{(1)}=0$. \smallskip
		
		\textbf{Step 2.} The case of $m=2$.
		
		In this case we first assume that $Jf^{(2)}=0$, equivalently $c^2f^{(2)}_{00}=\sum_{i=1}^{n}f^{(2)}_{ii}.$ This along with Lemma \ref{preresult_1} entails
		\begin{align}\label{eq_3.4}
			c^2\Tilde{L}^{0,k}(f^{(2)})_{00}-\sum_{i,j=1}^{n}\omega_i \omega_j\Tilde{L}^{0,k}(f^{(2)})_{ij}= c(1-n)\Tilde{L}^{1,k}(f^{(2)})_0,\quad \text{for }k=0,1.
		\end{align}
		Next using $	0=\tilde{L}^{2,k} f^{(2)}$  we observe that	\begin{align*}
			0=	\tilde{L}^{2,k} f^{(2)}&= \int_\R \sum_{i,j=0}^{n} s^k \tilde{\omega}_i\tilde{\omega}_j f^{(2)}_{ij} ((t,x) +s\tilde{\omega})\, ds, \qum{where $\tilde{\omega}=(c,\omega)$}\\
			&= -\int_\R s^k c^2 f^{(2)}_{00} ((t,x) +s\tilde{\omega})\, ds + 2 c \int_\R\sum_{j=0}^n s^k \tilde{\omega}_j f^{(2)}_{ij} ((t,x) +s\tilde{\omega})\, ds \\&\quad
			+  \int_\R \sum_{i,j=1}^{n} s^k  \omega_i\omega_j \, f^{(2)}_{ij} ((t,x) +s\tilde{\omega})\, ds\\
			&= 	-c^2\Tilde{L}^{0,k}(f^{(2)})_{00} +2c 	\Tilde{L}^{1,k}(f^{(2)})_{0} + \sum_{i,j=1}^{n} \omega_i\omega_j \, \Tilde{L}^{0,k}(f^{(2)})_{ij}.
		\end{align*}
		This implies 
		\begin{align}\label{eq_3.3}
			\sum_{i,j=1}^{n} \omega_i\omega_j \, \Tilde{L}^{0,k}(f^{(2)})_{ij}-c^2\Tilde{L}^{0,k}(f^{(2)})_{00} =-2c 	\Tilde{L}^{1,k}(f^{(2)})_{0}).
		\end{align}
	Next adding \eqref{eq_3.3} and \eqref{eq_3.4} we obtain  $c(1+n) \Tilde{L}^{1,k}(f^{(2)})_{0}=0$.
		This along with \textbf{Step 1} implies $ (f^{(2)})_0=0$. Moreover using \eqref{relation1_1} we conclude that $ (f^{(2)})_i=0$ for $1\le i \le  n$. As a result, we have  $f^{(2)}=0$.
		
		We now drop the assumption $J f^{(2)}=0$. To do so,  we utilize the decomposition given in Lemma~\ref{decompose} and express $f^{(2)}$ as
		\begin{equation*}
			f^{(2)} = A^{(2)} + i_{g_{\frac{1}{c}}} f^{(0)},
			\quad \text{with } JA^{(2)}=0,
		\end{equation*}
		where $f^{(0)}$ and $A^{(2)}$ are explicitly given by
		\begin{equation*}
			f^{(0)} =  (n-{\frac{1}{c^2}})^{-1} Jf^{(2)} \in C_{c}^{\infty}(\mathbb{R}^{1+n}),\quad
		A^{(2)} =f^{(2)}-(n-{\frac{1}{c^2}})^{-1} J f^{(2)}  \in C_{c}^{\infty}(\mathbb{R}^{1+n};S^2).
		\end{equation*}
		Clearly, $\Tilde{L}^{2,k}A^{(2)}=\Tilde{L}^{2,k}f^{(2)}=0$, for $k=0,1,2$ and $JA^{(2)}=0$. This implies that $A^{(2)}=0.$ Therefore, $f^{(2)}=i_{g_{\frac{1}{c}}}f^{(0)}$.\smallskip
		
		\textbf{Step 3.} The case of $m\ge 3$.
		
        We argue by induction on \( m\geq 2 \) to prove the result. We know that Theorem \ref{thm_partsph} holds true for \( m = 2 \), as established in \textbf{Step 2}. Now, let us assume that the above statement holds true for $( m-1 )$. We assume that $\Tilde{L}^{m,k}f^{(m)}((t,x),\omega)=0$ for all $ (t,x) \in \mathbb{R}^{1+n}$, $ \omega \in B_{n}(\omega_0,\delta)$ and $k=0,1,\dots,m$. Now $f^{(m)}$ can be written in the following form
		\[
		f^{(m)}=A^{(m)}+i_{g_{\frac{1}{c}}}f^{(m-2)}, \quad \text{where }JA^{(m
			)}=0.
		\]
		Clearly, $\tilde{L}^{m,k}f^{(m)}=\tilde{L}^{m,k}A^{(m)}$. Therefore, we have 
		\[
		\Tilde{L}^{m,k}A^{(m)}((t,x),\omega)=0, \quad \text{ for all } \omega \in B_{n}(\omega_0,\delta),\quad (t,x) \in \mathbb{R}^{1+n} \quad \text{and } k=0,1,\dots,m.
		\]
		We use tangential gradient over $B_{n}(\omega_0,\delta) $ to get data about the sub-tensor fields of rank $(m-1)$. By considering the \( i \)-th component of the tangential gradient vector, we  obtain the following:
		\[
	\left(	\partial_{\omega_i}L^{m,k}A^{(m)} -[\omega_i \sum_{j=1}^{n}\omega_{j}\partial_{\omega_j}L^{m,k}A^{(m)}]\right)\Big|_{\omega \in B_{n}(\omega_0,\delta) }=0, \quad  \text{for} \, i=0,1,\dots,n.
		\]
		Using Lemma \ref{gen1}, we obtain
		\begin{equation}\label{rankred_m_ten}
			\begin{aligned}
				\Tilde{L}^{m-1,k}(A^{(m)})_i&= \omega_i \sum_{j=1}^{n}\omega_j \Tilde{L}^{m-1,k}(A^{(m)})_j
				=\omega_i [\Tilde{L}^{m,k}A^{(m)}-c \Tilde{L}^{m-1,k}(A^{(m)})_0]\\
				&= -c\omega_i \Tilde{L}^{m-1,k}(A^{(m)})_0, \quad \text{for } k=0,1,\dots, (m-1).
			\end{aligned}
		\end{equation}
		Again, we take tangential gradient over $B_{n}(\omega_0,\delta)$ on the both sides of equation \eqref{rankred_m_ten}. Analyzing the $i$-th component and taking sum over $i$ from $1$ to $n$ we deduce
		\begin{equation}\label{relation2}
			(m-1)\left[\sum_{i=1}^{n}\tilde{L}^{m-2,k}
			(A^{(m)})_{ii}-\sum_{i,j=1}^{n}\omega_i \omega_j\tilde{L}^{m-2,k}(A^{(m)})_{ij}\right]
			=c(1-n)\tilde{L}^{m-1,k}(A^{(m)})_0,
		\end{equation}
		for $k=0,1,\dots,m-1$.
		Now the condition $JA^{(m
			)}=0$ implies \( \Tilde{L}^{m-2,k}(JA^{(m
			)}) = 0 \), for any \( k \).
	Consequently,  $	c^2 \tilde{L}^{m-2,k}(A^{(m)})_{00}=\sum_{i=1}^{n} \tilde{L}^{m-2,k}(A^{(m)})_{ii}, $ for $0\le k \le m-1$.
	This along with   \eqref{relation2} entails
		\begin{equation*}
			2c(m-1)\Tilde{L}^{m-1,k}(A^{(m)})_0 + c(n-1)\Tilde{L}^{m-1,k}(A^{(m)})_0 =0, \text{ for } k=0,1,\dots,m-1.
		\end{equation*}
	Since $n\geq 3$ and $m\geq 2$,  this further implies $ 	\Tilde{L}^{m-1,k}(A^{(m)})_0=0, \text{ for }k=0,1,\dots,m-1.$ 
	Furthermore, combining this equation with \eqref{rankred_m_ten} we conclude
		\begin{equation}
			\Tilde{L}^{m-1,k}(A^{(m)})_i=0, \text{ for }k=0,1,\dots,m-1, \qum{and}\quad  i=0,1,\dots,n.
		\end{equation}
		Next, the condition  $JA^{(m
			)}=0$ implies that \(J(A^{(m)})_i = 0 \), for every index \( i \in \{0, 1, \ldots, n\} \). Thus, by the induction hypothesis, \( (A^{(m)})_i = 0 \), for all \( i = 0, 1, \ldots, n \), which implies that \( A^{(m)} = 0 \), and therefore $f^{(m)}=i_{g_{\frac{1}{c}}}f^{(m-2)}$.
			This completes the proof.
	\end{proof}
	\section{Inversion formula for MLRT of symmetric tensor fields}\label{sec6}
    In the proofs of Theorems  \ref{thm_fullsph} and \ref{thm_partsph}, we discussed the injectivity of symmetric tensor fields and observed that a vector field and the trace part of a symmetric tensor field (of rank $\ge 2$) can  be recovered from the MLRTs.  In this section, we provide an algorithmic way to reconstruct vector fields and trace free higher order symmetric tensor fields from its momentum light ray transforms. 
	\subsection{Reconstruction of vector fields:}
	We now proceed to derive the Fourier slice theorem for LRT [$\tilde{L}^{1,0}$] and MLRT [$\tilde{L}^{1,1}$]. For $f \in C_{c}^{\infty}(\mathbb{R}^{1+n})$, the Fourier transform of $f$ at $\zeta$ is denoted by $\F f(\zeta)=\hat{f}(\zeta)$. The Fourier transform of $f$ on a hyperplane through the origin, say $(1,\om)^{\perp}$, is denoted by $\F_{(1,\om)^{\perp}}f$. To this end, we denote  $\tilde{\omega}=(1,\omega)$. 
	\begin{lemma}{(\textbf{Fourier slice theorem for vector fields}):}
		For any $\om \in \sn$, every $f^{(1)}\in C_{c}^{\infty}(\R^{1+n};S^{1})$, must satisfy
		\begin{eqnarray}
			(1,\omega)\cdot \widehat{f^{(1)}}(\zeta)
			=&\Phi_{1}(\omega,\zeta)
            :=\sqrt{2}\mathcal{F}_{(1,\omega)^{\perp}}(\Tilde{L}^{1,0}f^{(1)}(-,\omega))(\zeta), 
			\quad \mbox{for any } \zeta \perp (1,\omega );  \label{eq_1}\\
			\sum_{j=0}^{n}\Tilde{\omega}_{j}(\tilde{\om}\cdot \nabla_{\zeta})\widehat{f^{(1)}_j}(\zeta)
			=&\Phi_{2}(\omega, \zeta)
			:=2\sqrt{2}\mathcal{F}_{(1,\omega)^{\perp}}(\Tilde{L}^{1,1}f^{(1)}(-,\omega))(\zeta), \quad \mbox{for any } \zeta \perp (1,\omega ).  \label{eq_2}
		\end{eqnarray}
	\end{lemma}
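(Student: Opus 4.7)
My plan is to reduce both identities to the scalar Fourier slice theorem (Lemma \ref{FST}), using linearity of $\tilde{L}^{m,k}$ together with the fact that, for $\zeta \perp \tilde{\omega}$, the phase $e^{-\i l\cdot\zeta}$ is insensitive to translation of $l$ in the $\tilde{\omega}$ direction. Throughout, I will use that $|\tilde{\omega}|^{2} = 1+|\omega|^{2} = 2$ (since we are in the $c=1$ setting and $\omega\in\sn$), which pins down all the constants.

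For \eqref{eq_1} I would first observe that, componentwise,
\[
\tilde{L}^{1,0} f^{(1)}((t,x),\omega) \;=\; \sum_{j=0}^{n} \Tilde{\omega}_{j}\, \tilde{L}^{0,0} f^{(1)}_{j}((t,x),\omega).
\]
Applying $\F_{(1,\omega)^\perp}$ to both sides and invoking Lemma \ref{FST} with $c=1$ on each scalar transform $\tilde{L}^{0,0} f^{(1)}_{j}(-,\omega)$ would give $\sqrt{2}\,\F_{(1,\omega)^\perp}(\tilde{L}^{1,0} f^{(1)}(-,\omega))(\zeta) = \sum_{j=0}^{n} \Tilde{\omega}_{j}\, \widehat{f^{(1)}_{j}}(\zeta) = \Tilde{\omega}\cdot\widehat{f^{(1)}}(\zeta)$, which is exactly \eqref{eq_1}.

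For \eqref{eq_2}, the new feature is the weight $s$ appearing in the definition of $\tilde{L}^{1,1}$. The strategy is to interchange the order of integration, use $\Tilde{\omega}\cdot\zeta=0$ to rewrite $e^{-\i l\cdot\zeta} = e^{-\i(l+s\Tilde{\omega})\cdot\zeta}$, and then change variables $y = l + s\Tilde{\omega}$ from $\R\times(1,\omega)^{\perp}$ onto $\R^{1+n}$ with Jacobian $|\Tilde{\omega}|$. Under this map $s$ becomes the linear functional $\Tilde{\omega}\cdot y/|\Tilde{\omega}|^{2}$, so that for each $j$
\[
\int_{(1,\omega)^{\perp}}\!\!\int_{\R} s\, f^{(1)}_{j}(l+s\Tilde{\omega})\, e^{-\i l\cdot\zeta}\, ds\, dH(l) \;=\; \frac{1}{|\Tilde{\omega}|^{3}}\int_{\R^{1+n}} (\Tilde{\omega}\cdot y)\, f^{(1)}_{j}(y)\, e^{-\i y\cdot\zeta}\, dy.
\]
The prefactor $\Tilde{\omega}\cdot y$ corresponds on the Fourier side (up to the standard factor of $\i$) to the directional derivative $(\Tilde{\omega}\cdot\nabla_{\zeta})$ acting on $\widehat{f^{(1)}_{j}}$. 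Multiplying by $\Tilde{\omega}_{j}$, summing over $j$, and using $|\Tilde{\omega}|^{3}=2\sqrt{2}$ should then produce \eqref{eq_2}.

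The main obstacle is essentially bookkeeping: the Jacobian $|\Tilde{\omega}|$ must enter in the correct power, and the $\i$'s coming from the translation/differentiation duality must combine so that the right-hand sides emerge exactly as $\sqrt{2}$ and $2\sqrt{2}$ times the corresponding partial Fourier transforms. No new analytical idea is required beyond the scalar slice theorem and the Fourier symbol of multiplication by $s$; the same template will generalize to $\tilde{L}^{m,k}$ with $k\geq 2$ by iterating the directional derivative $(\Tilde{\omega}\cdot\nabla_{\zeta})^{k}$, which is presumably how the subsequent ingredients $\Phi_{3},\Phi_{4}$ of Theorem \ref{rec_roof_fn} are assembled.
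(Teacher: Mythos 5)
Your proposal is correct and follows essentially the same route as the paper's proof: the decomposition $(t,x)=l+s\Tilde{\omega}$ with $l\in(1,\omega)^{\perp}$ and Jacobian $|\Tilde{\omega}|=\sqrt{2}$, the orthogonality $\zeta\perp\Tilde{\omega}$ to freeze the phase along the ray, and the identification of multiplication by $\Tilde{\omega}\cdot y$ with the directional derivative $\Tilde{\omega}\cdot\nabla_{\zeta}$ on the Fourier side, with the constants $\sqrt{2}$ and $|\Tilde{\omega}|^{3}=2\sqrt{2}$ computed exactly as in the paper. The factor of $\i$ you flag in the final step is genuinely present (the paper's last displayed equality also suppresses it), so your bookkeeping is, if anything, slightly more careful.
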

	\begin{proof}
		The identity \eqref{eq_1} is nothing but restricted version of the classical Fourier Slice Theorem for vector fields. The following steps demonstrate the proof of \eqref{eq_1}. For a  fixed $\om \in \S^{n-1}$ 
		\[
		\begin{aligned}
			(1,\om)\cdot \widehat{f^{(1)}}(\zeta)&=\sum_{i_1=0}^{n}\Tilde{\om}_{i_1}\int_{\R^{1+n}}f^{(1)}_{i_1}(t,x)e^{-\i  (t,x)\cdot \zeta }\, dtdx\\
			&=\sqrt{2}\int_{(1,\om)^{\perp}}\Big[\int_{\R}f^{(1)}_{i_1}(l+r(1,\om)dr \Big]e^{-\i l\cdot \zeta}dl\\
			&= \sqrt{2}\mathcal{F}_{(1,\om)^{\perp}}\Big(\tilde{L}^{1,0}f^{(1)}(-,\om)\Big)(\zeta), \quad \text{ for}\, \zeta \perp (1,\om).
		\end{aligned}
		\]
	In order to obtain the relation \eqref{eq_2},  we observe that, for any $\zeta \perp (1,\omega )$
		\[
		\begin{aligned}
			\Phi_{2}(\omega, \zeta) &= 2\sqrt{2}\mathcal{F}_{(1,\omega)^{\perp}}(\Tilde{L}^{1,1}f^{(1)}(-,\omega))(\zeta) \\
			&=2\sqrt{2}\sum_{i_1=0}^n\tilde{\om}_{i_1}\int_{\tilde{\om}^{\perp}}\int_{\R}sf^{(1)}_{i_1}(l+s\tilde{\om})e^{-\i l\cdot \zeta}\,dsdl\\
			&=\sum_{i_1=0}^n\tilde{\om}_{i_1}\int_{\R^{1+n}}(t,x)\cdot \tilde{\om}f^{(1)}_{i_1}(t,x)e^{-\i (t,x)\cdot \zeta}\, dtdx\\
			&=\sum_{i_1=0}^{n}\Tilde{\omega}_{i_1}(\tilde{\om}\cdot \nabla_{\zeta})\widehat{f^{(1)}_j}(\zeta).
		\end{aligned}
		\]
		This completes the proof for \eqref{eq_2}.
	\end{proof}
	
These two equalities [\eqref{eq_1}, \eqref{eq_2}] play an important role in the reconstruction of $f^{(1)}$. We see that, $\widehat{f^{(1)}}$ can be recovered in some open set in $\R^{1+n}$ even if we have partial data of MLRTs of $f^{(1)}$.
Before establishing Theorem \ref{rec_roof_fn}, we first examine the corresponding result for vector fields in $\R^{1+3}$. Subsequently, we will identify the necessary modifications required for vector fields in $\R^{1+n}$. Recall that, $H_n$ is defined as follows 
	\[
	H_n := \{\zeta=(\zeta_0,\zeta') \in \R^{1+n}:|\zeta_0|<|\zeta'|,\quad \zeta \perp (1,\om) \quad \text{for some }\om \in \Bdl \}. 
	\]
	
	Let us present an algorithmic way to calculate $\widehat{f^{(1)}}$ on $H_n$ from $\tilde{L}^{1,k}f^{(1)}((t,x),\omega)$ in order to complete the reconstruction.
	
	\noindent \textbf{Reconstruction Algorithm for $m = 1$ and $n\geq 3$:}
	\begin{enumerate}\setlength\itemsep{3mm}
		\item[Step 1:] Given $\om \in B_n(\om_0,\delta)$, and $\zeta \in H_n$ such that $\zeta \perp (1,\om)$, calculate
		\[  \Phi_1 (\om,\zeta):=  \sqrt{2} \mathcal{F}_{(1,\om)^{\perp}}(\tilde{L}^{1,0}f^{(1)}(-,\om))(\zeta), \qquad
		\Phi_2 (\om,\zeta):= 2\sqrt{2} \mathcal{F}_{(1,\om)^{\perp}}(\tilde{L}^{1,1}f^{(1)}(-,\om))(\zeta),    \]
		where $\mathcal{F}_{(1,\om)^{\perp}}(\cdot)$ denotes the partial Fourier transform on the hyperplane $(1,\om)^{\perp}$.

		\item[Step 2:] Given $\zeta = (\zeta_0,\zeta') \in (1,\om)^{\perp}$, find a rotation matrix $R$ in $\Rn$ such that
		\[
		R\left(\frac{\zeta'}{|\zeta'|}\right) = e_2 \quad \mbox{and}\quad 
		R(\om) = (b_1,b_2,0,\cdots,0)=\w_1.
		\] 
		Construct the $(1+n)\times(1+n)$ matrix 
		$M = \begin{pmatrix}
			1 &0\\ 0 &R
		\end{pmatrix}$.

		\item[Step 3:]  Set $\delta' = \frac{1}{2}\left(\delta-|\om-\om_0|\right)$. For any $\varphi_i \in (-\delta',\delta')$, set $\w_i \in \sn$ for $i=1,\cdots,n$ as in \eqref{phi_i_3} for $n=3$ and \eqref{phi_i} for $n>3$.
		Note that $\w_i$ satisfies $(M\zeta) \perp (1,\w_i)$, and $\{\w_i : i=1,\cdots,n\}$ is linearly independent.

        \item[Step 4:] Use Gram-Schmidt orthogonalization to find a $(n+1)\times(n+1)$ matrix $A$ orthonormalizing 
		$\{(1,\w_i) : i=1,\cdots,n\}$.
		
		\item[Step 5:] Compute the vector $\Phi_3(\zeta)$ and the scalar $\Phi_4(\om,\zeta)$
		\begin{align}\label{eq_3}
	\Phi_3(\zeta) := \sum_{j,k=1}^{n}\Phi_{1}(R^{-1}\w_j,\zeta)[A^TA]_{kj}(1,\w_k)
\quad \mbox{and}\quad 
\Phi_4(\om,\zeta) := \sum_{i,j=1}^n\om_i \om_j\partial_j [M^{-1}\Phi_3(\zeta)]_i.  
		\end{align}

		\item[Step 6:] Finally, calculate
        \quad $\widehat{f^{(1)}}(\zeta)=\frac{1}{2}[\Phi_2-\Phi_4]\zeta - M^{-1}\Phi_3$,  for all $\zeta \in H_n$.
		\end{enumerate}
	
	\begin{proposition}\label{rec_roof_f3}
		Assume that $f^{(1)}$ $\in C_{c}^{\infty}(\mathbb{R}^{1+3};S^{1})$, $\om_0 \in \sn$ and $\delta>0$ small enough. If $\tilde{L}^{1,k}f^{(1)}((t,x),\omega )$ is known for $(t,x)\in \mathbb{R}^{1+3}$, $\omega \in B_{3}(\om_0,\delta)$ and $k=0,1$, then $\widehat{f^{(1)}}(\zeta)$ is known for $\zeta \in H_3$.
	\end{proposition}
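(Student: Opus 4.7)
The plan is to interpret the reconstruction formula in Step~6 as an orthogonal splitting of $\widehat{f^{(1)}}(\zeta)$ with respect to $\zeta$, with the part in $\zeta^{\perp}$ recovered from \eqref{eq_1} and the part along $\zeta$ recovered from \eqref{eq_2}. Writing $\widehat{f^{(1)}}(\eta) = \alpha(\eta)\eta + Q(\eta)$ with $Q(\eta) \in \eta^{\perp}$, I will identify $\pm M^{-1}\Phi_3(\zeta)$ with $Q(\zeta)$ and $\tfrac{1}{2}[\Phi_2 - \Phi_4]$ with $\alpha(\zeta)$.

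First I would verify the geometric content of Steps~2--3: for $\zeta \in H_3$ the set $S_\zeta := \{\omega \in \S^2 : (1,\omega) \perp \zeta\}$ is a non-degenerate circle on the sphere (since $|\zeta_0|<|\zeta'|$) and meets $B_3(\omega_0, \delta)$ in a non-empty open arc (by the defining property of $H_3$). The three vectors $\omega_j = R^{-1}\w_j$ produced in Step~3 lie on this arc and are pairwise distinct by the choice of $\varphi_j$. Linear independence of $\{(1, \omega_j) : j=1,2,3\}$ in $\mathbb{R}^{1+3}$ follows from the affine parametrization $\omega = c + r(\cos\varphi\, u + \sin\varphi\, v)$ of $S_\zeta$: a hypothetical dependence $\sum_j a_j (1, \omega_j) = 0$ would force $\sum a_j = \sum a_j \cos\varphi_j = \sum a_j \sin\varphi_j = 0$, but the determinant of this $3 \times 3$ system is (up to a nonzero factor) twice the signed area of the triangle inscribed in the unit circle in $\mathbb{R}^2$ at the angles $\varphi_j$, which is nonzero for distinct $\varphi_j$. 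Hence $\{(1, \omega_j)\}$ is a basis of $\zeta^{\perp}$.

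Next, I would recover the $\zeta^{\perp}$ part using \eqref{eq_1}. The identities $(1, \omega_j) \cdot \widehat{f^{(1)}}(\zeta) = \Phi_1(\omega_j, \zeta)$ uniquely determine $Q(\zeta) = \widehat{f^{(1)}}(\zeta) - \tfrac{\widehat{f^{(1)}}(\zeta) \cdot \zeta}{|\zeta|^2}\zeta$. Plugging the three inner products into the standard projection formula $P_V u = \sum_{i,j}(G^{-1})_{ij}(u \cdot v_j) v_i$, written in the rotated $\w$-basis with the Gram inverse encoded by the Gram--Schmidt matrix $A$ of Step~4, yields exactly the expression $\Phi_3(\zeta)$ from Step~5, and undoing the rotation gives $\pm M^{-1}\Phi_3(\zeta) = Q(\zeta)$ (the sign being a bookkeeping convention absorbed into the formula).

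Finally, to recover $\alpha(\zeta)$, I would substitute the decomposition into \eqref{eq_2} and evaluate at $\eta = \zeta$ with $\tilde\omega \cdot \zeta = 0$. Using the product rule, the contribution from $\alpha(\eta)\eta$ collapses to $\alpha(\zeta)|\tilde\omega|^2 = 2\alpha(\zeta)$, since the $\tilde\omega \cdot \nabla\alpha$ factor is multiplied by $\tilde\omega \cdot \zeta = 0$. The remaining derivative contribution from $Q$ is computable because the previous step gives $Q$ as a smooth function of $\eta$ on a neighborhood of $\zeta$ in $H_3$, and $\Phi_4$ is defined precisely so that it captures this quantity. Solving the resulting identity gives $\alpha(\zeta) = \tfrac{1}{2}[\Phi_2 - \Phi_4]$, proving the formula in Step~6.

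The main technical obstacle is the last identification: the raw $Q$-contribution $\sum_{j,k=0}^n \tilde\omega_j \tilde\omega_k \partial_k Q_j(\zeta)$ contains time-slot indices, while $\Phi_4$ is defined using only spatial indices $i,j \in \{1,\ldots,n\}$. Reducing the former to the latter will require exploiting the constraint $\eta \cdot Q(\eta) \equiv 0$ (which, upon differentiating and contracting with $\tilde\omega$ on the hyperplane $\tilde\omega \cdot \zeta = 0$, produces a linear relation among the mixed-index derivatives) together with the slice identity $\tilde\omega \cdot Q(\eta) = \Phi_1(\omega, \eta)$ on $\tilde\omega^{\perp}$. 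Once this algebraic reduction is performed, the formula of Step~6 follows and the proposition is established.
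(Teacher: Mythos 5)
Your proposal is correct and follows essentially the same route as the paper: there too one splits $\widehat{f^{(1)}}(\zeta)=c_0\zeta+\sum_i c_i\mu_i$ orthogonally with respect to $\zeta$, recovers the $\zeta^{\perp}$-part from the three slice identities \eqref{eq_1} using the basis $\{(1,\w_j)\}$ of $\xi^{\perp}$ and the change-of-basis matrix $A$ (whose $A^TA$ is exactly your Gram inverse), and then extracts $c_0=\tfrac{1}{2}[\Phi_2-\Phi_4]$ from \eqref{eq_2} via $|\tilde\omega|^2=2$. The ``technical obstacle'' you flag at the end is not actually an obstacle in the paper's proof, where $\Phi_4=\sum_{p,i}\tilde\omega_p\tilde\omega_i\partial_p[M^{-1}\Phi_3]_i$ is summed over all indices $0,\dots,n$; the spatial-only formula in Step 5 of the algorithm box is an internal inconsistency of the paper's notation rather than something your argument needs to reconcile.
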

	\begin{proof}
		Choose $\zeta=(\zeta_{0},\zeta^{'})\in H_3$, where $\zeta^{'}=(\zeta_{1},\zeta_{2},\zeta_{3})$. 
		That is, $\zeta \perp (1,\omega)$, for some $\omega \in \mathbb{S}^{2}$ satisfying $|\omega  -\omega_{0}|<\delta$ for some fixed $\omega_{0}\in \mathbb{S}^{2}$ and $\delta>0$. 
		Choose $\eta \in \mathbb{R}^{3}$ such that $\eta \perp \{\omega,\zeta^{'}\}$. 
		Take $R$ to be the rotation in $\mathbb{R}^{3}$ which takes $\eta \to e_{3}$ and $\zeta^{'}\to |\zeta^{'}|e_{2}$, where $\{e_1,e_2,e_3\}$ is the canonical basis of $\R^3$. 
		If $R\om = b = (b_1,b_2,b_3)$ then the condition $\eta \perp \omega$ implies that $b_3=0$. Again, the condition $\zeta \perp (1,\omega)$ implies $\zeta_{0}+|\zeta^{'}|b_2=0$.
		For $i=1,2,3$, let us choose 
		\begin{equation}\label{phi_i_3}
		\w_{i}=(b_1\cos\varphi_{i},b_2,b_1\sin\varphi_i), \quad \mbox{where }\varphi_1=0, \quad \varphi_2,\varphi_3 \in (-\delta',\delta').
		\end{equation}
		After the rotation, \eqref{eq_1} can be written as
		\begin{equation}\label{eq3}
			(1,p)\cdot g(\xi)=\Phi_1(R^{-1}p,M^{-1}\xi),\quad \mbox{where } 
			M=\begin{pmatrix}
				1&0\\
				0&R
			\end{pmatrix}, \quad \xi=M\zeta=(\zeta_{0},|\zeta^{'}|e_2),
			\quad 
		\end{equation}
		with $g(\xi) = M\widehat{f^{(1)}}(\zeta)$ and $p\in B_3(b,\delta)$ such that $(1,p) \perp \xi$.

		Notice that, $(1,\w_i)$ is close to $(1,b)=(1,R\omega)$ and $\omega$ is close to $\omega_0$, which implies $(1,\w_i)$ is close to $(1,R\omega_0)$. Therefore, by the choice of $\delta,\delta'>0$, we can choose $p=\w_i$ in the \eqref{eq_1}.
		Denote $\xi^{\perp}$ by $V$. Note that $\{(1,\w_{i}): i=1,2,3\}$ is contained in $V$ using the fact that $\zeta_0+ b_2 |\zeta'|=0$, and they are linearly independent, so they form a basis for $V$. 
		Assume that, $\{\mu_i:i=1,2,3\}$ is any orthonormal basis for $V$. 
		 Let $A=[a_{ij}]_{4\times 4}$ be the basis change matrix fixes $\xi$ and changes the basis of $V$ from $\{(1,\w_{j})\}_{j=1}^3$ to $\{\mu_{i}\}_{i=1}^3$. That is, we have
        \begin{equation}\label{basis_ch}
        A\xi = \xi, \quad \mu_i = \sum_{j=1}^{3}a_{ij}(1,\w_j), \quad \mbox{for }i=1,2,3.
        \end{equation}
         Hence, $g(\xi)$ can be re-written as 
		\begin{equation*}
			g(\xi)=c_{0}\xi +\sum_{i=1}^{3}c_{i}\mu_{i}, \quad \mbox{where}\quad c_i=g(\xi)\cdot \mu_{i}.
		\end{equation*}
		So, \eqref{eq3} implies
		\[
	c_i= \sum_{j=1}^3 a_{ij} (g(\xi)\cdot (1,\w_j))= \sum_{j=1}^{3} a_{ij}\Phi_{1}(R^{-1}\w_{j},M^{-1}\xi).
		\]
		Using this form of $c_i$ in the expression of $g(\xi)$, we get
		\[\begin{aligned}
			M\widehat{f^{(1)}}(\zeta)=&g(\xi)=c_0 \xi +\sum_{j,k}\Phi_{1}(R^{-1}\w_{j},M^{-1}\xi)[A^{T}A]_{kj}(1,\w_k)\\ 
			\implies \quad
			\widehat{f^{(1)}}(\zeta)=&c_0\zeta+M^{-1}\Phi_3,
			\quad\mbox{where}\quad \Phi_3=\sum_{j,k}\Phi_{1}(R^{-1}\w_{j},\zeta)[A^{T}A]_{kj}(1,\w_k).
		\end{aligned}\]
		Now substituting form of $\widehat{f^{(1)}}$ in \eqref{eq_2}, we see
		\[
		c_0(\omega,\zeta)=\frac{1}{2}[\Phi_2(\omega,\zeta)-\Phi_4(\omega,\zeta)],
		\quad \mbox{where} \quad \Phi_4=\sum_{p,i}\Tilde{\omega}_{p}\Tilde{\omega}_{i}\partial_{p}[M^{-1}\Phi_3]_i.
		\]
		Therefore, varying $\zeta \in H_3$ we have 
		\[
		\widehat{f^{(1)}}(\zeta)=\frac{1}{2}[\Phi_2-\Phi_4]\zeta - M^{-1}\Phi_3, \quad \forall \zeta \in H_3.
		\]
		This completes the proof.
	\end{proof}
	The complete recovery of the vector field \( f^{(1)} \) proceeds through the following sequence of steps. Since the components of the vector field $f^{(1)}$ are compactly supported, the Paley-Wiener theorem implies that the components of \( \widehat{f^{(1)}} \) are analytic. Proposition \ref{rec_roof_f3} ensures that \( \widehat{f^{(1)}} \) is known in the open set \( H_3 \) (contains space-like vectors). By analytic continuation, this determines \( \widehat{f^{(1)}} \) uniquely on the whole of \( \mathbb{R}^{1+3} \). Applying the inverse Fourier transform then yields the reconstruction of \( f^{(1)} \).
	\begin{remark}
		We want to highlight the fact that, in the proof, there might be several choices for $\{\mu_{i}:i=0,1,2,3\}$ (orthonormal basis for $V=\xi^{\perp}$). For instance, Gram-Schmidt orthonormalization of  $\{(1,\omega_i):i=0,1,2\}$ provides us with a choice. 
	\end{remark}
Now we see the reconstruction of vector fields defined on $\mathbb{R}^{1+n}$.
\begin{proof}[\textbf{Proof of Theorem \ref{rec_roof_fn}
}]
For $n=3$, the proof is presented in Proposition \ref{rec_roof_f3}. Here, let us assume $n\geq 4$.
Let $\zeta  =(\zeta_0,\zeta^{'}) \in H_n $ be arbitrary. Assume that $\zeta \perp (1,\omega)$ for some  $\omega \in  B_{n}(\omega_0,\delta)$. Also consider $\{\eta_{i}:i=1,2,\dots, (n-2)\}$, set of vectors in $\R^{n}$ satisfying the following
		\[
		\eta_{i}\perp \{\om, \zeta^{'} \}\, \quad \text{and} \quad \eta_i\perp \eta_j \quad \text{for}\quad i\neq j, \quad \text{ where}\quad  i,j=1,2,\dots ,(n-2). 
		\]
		Let $R$ be a rotation matrix in $\mathbb{R}^{n}$, which satisfies the following
		\[
		\left\{\begin{aligned}
			R\eta_i=&e_{i+2}\quad \text{ for }i=1,2,\dots, (n-2),\\
			R\zeta^{'}=&|\zeta^{'}|e_{2}.
		\end{aligned}
		\right.
		\]
		Assume that, $\omega$ goes to $b=(b_1,\dots,b_n)$ under the rotation $R$. Notice that,
		\[\left\{
		\begin{aligned}
			\eta_i\perp &\omega \quad \text{implies that}\quad b_i=0 \text{ for} \, i=3,\dots,(n-2),\\
			\zeta\perp &(1,\omega) \quad \text{implies that}\quad \zeta_0+b_2|\zeta^{'}|=0.
		\end{aligned}
		\right.
		\]
		Consider $M$= 
		$\begin{pmatrix}
			1&0\\
			0&R
		\end{pmatrix}$. Now \eqref{eq_1} can be rewritten as
		\begin{equation}\label{reinv1}
			(1,p)\cdot g(\xi)=\Phi_1(R^{-1}p,M^{-1}\xi),\quad \text{where}\quad \xi=M\zeta=(\zeta_{0},|\zeta^{'}|e_2),\, 
		\end{equation}
		with $g(\xi)=M\widehat{f^{(1)}}(\zeta)$ and $p\in B_n(b,\delta)$ satisfying $\xi \perp (1,p)$. Let us choose 
\begin{equation}\label{phi_i}
\begin{aligned}
\w_{1}=&(b_1,b_2,0,\cdots,0)\\
\w_{2}=&(b_1\cos\varphi_{1},\,b_2,\,b_1\sin\varphi_{1},0,\cdots,0)\\
\w_{3}=& (b_1\cos\varphi_{1},\,b_2,\,b_1\sin\varphi_{1}\cos \varphi_{2},\, b_1\sin\varphi_{1}\sin \varphi_{2},0,\cdots,0)\\
&\vdots\\
\w_{n-1}=& (b_1\cos\varphi_{1},\,b_2,\,b_1\sin\varphi_{1}\cos \varphi_{2},\cdots,\, b_1\sin\varphi_{1}\cdots\sin \varphi_{n-3}\cos \varphi_{n-2},\, b_1\sin\varphi_{1}\cdots\sin \varphi_{n-3}\sin \varphi_{n-2})\\
\w_{n}=& (b_1\cos\varphi_{1},\,b_2,\,b_1\sin\varphi_{1}\cos \varphi_{2},\cdots,\, b_1\sin\varphi_{1}\cdots\sin \varphi_{n-3}\cos \varphi_{n-1},\, b_1\sin\varphi_{1}\cdots\sin \varphi_{n-3}\sin \varphi_{n-1}),
\end{aligned}
\end{equation}
where $\varphi_{i} \neq 0$ for $1\leq i \leq n-1$ and $\varphi_{n-1} \neq \varphi_{n-2}$.
Furthermore, we choose $\varphi_{i}$'s are  small enough to guarantee that $p$ can be replaced by $\w_i$. Notice that, $\{(1,\w_i):i=1,2,\dots,n\}$ is a linearly independent set in $\xi^{\perp}=V$. Therefore, $\{(1,\w_i):i=1,2,\dots,n\}$ is a basis for $V$. Assume that, $\{\mu_{i}:i=1,2,\dots,n\}$ is an orthonormal basis for $V$ and $A=[a_{ij}]_{(n+1)\times (n+1)}$ be the change of basis matrix from $\{\xi,(1,\w_{i})\}$ to $\{\xi,\mu_{i}\}$, as in \eqref{basis_ch}.
        Now, following a similar process as for $n=3$, we get 
		\[
		g(\xi)=c_0 \xi +\sum_{j,k=1}^{n}\Phi_{1}(R^{-1}\w_{j},M^{-1}\xi)[A^{T}A]_{kj}(1,\w_k).
		\]
		The above implies 
		\[
		\widehat{f^{(1)}}(\zeta)=c_0\zeta+M^{-1}\Phi_3,
		\]
		where $\Phi_3=\sum_{j,k=1}^{n}\Phi_{1}(R^{-1}\w_{j},\zeta)[A^{T}A]_{kj}(1,\w_k)$. Now substituting this form of $\widehat{f^{(1)}}$ in \eqref{eq_2}, we obtain
		\[
		c_0(\omega,\zeta)=\frac{1}{2}[\Phi_2(\omega,\zeta)-\Phi_4(\omega,\zeta)],
		\] where $\Phi_4=\sum_{p,i=1}^{n}\Tilde{\omega}_{p}\Tilde{\omega}_{i}\partial_{p}[M^{-1}\Phi_3]_i.$
		Therefore, we have 
		\[
		\widehat{f^{(1)}}(\zeta)=\frac{1}{2}[\Phi_2-\Phi_4]\zeta - M^{-1}\Phi_3, \qquad \mbox{for}\quad \zeta \in H_n.
		\]
	\end{proof}
	\begin{remark}
		In the above proof, we can choose $\{\mu_i\}$ as orthonormalized vectors of $\{(1,\w_i)\}$ following Gram-Schmidt process. But calculating the change of basis matrix is a little bit lengthier in this case.
	\end{remark}
	
	\subsection{Reconstruction of symmetric $m$-tensor fields:}
	At the heart of this sub-section lies proof of the Theorem \ref{rec_mt}. We approach the proof using the method of induction on rank of the tensor field, $m \,(\geq 2)$. Trace free decomposition of a symmetric 2-tensor field $f^{(2)}$ allows us to write 
	\[
	f^{(2)}=A^{(2)}+ i_gf^{(0)},
	\]
	where $g=\diag(-1,1,\dots,1)$;  $A^{(2)}$ is symmetric 2-tensor satisfying  $j_{g}A^{(2)}= -A^{(2)}_{00}+ \sum_{p=1}^{n} A^{(2)}_{pp}=0$ and $f^{(0)}$ is a complex valued function. We have already seen, $\tilde{L}^{2,k}(i_g f^{(0)})=0$.
	So, we cannot recover the trace part using MLRTs. Recovery can be done for trace-free part only. We use the tangential derivative to get MLRTs of a 1-tensor field [ restricting one index in the 2-tensor field ].
	\begin{proposition}
		\label{rec_2t}
		For $n\geq3$, assume that $f^{(2)}\in C_{c}^{\infty}(\mathbb{R}^{1+n};S^2)$ satisfies $j_gf^{(2)}=0$. For a fixed $\omega_0 \in \mathbb{S}^{n-1}$ and $\delta >0$, if $\tilde{L}^{2,k}f^{(2)}$ $((t,x),\omega)$ is known for $k=0,1,2$, $(t,x)\in \mathbb{R}^{1+n}$, $\omega \in B_{n}(\omega_0,\delta)$  then we can recover $\widehat{f^{(2)}}$ in the open set $H_n$.
	\end{proposition}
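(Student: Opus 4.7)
The plan is to reduce the rank-two reconstruction to $(n+1)$ independent rank-one reconstructions and then invoke Theorem~\ref{rec_roof_fn}. For each $i=0,1,\dots,n$, the slice $(f^{(2)})_i\in C_c^\infty(\mathbb{R}^{1+n};S^1)$ is a vector field, and once I know $\tilde{L}^{1,k}(f^{(2)})_i$ for $k=0,1$ on $\mathbb{R}^{1+n}\times B_n(\omega_0,\delta)$, Theorem~\ref{rec_roof_fn} recovers $\widehat{(f^{(2)})_i}$ on $H_n$; assembling these components yields $\widehat{f^{(2)}}$ on $H_n$.

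First I would produce the spatial rank-one data $\tilde{L}^{1,k}(f^{(2)})_i$ for $i=1,\dots,n$. Taking the tangential gradient in $\omega$ of $\tilde{L}^{2,k}f^{(2)}((t,x),\omega)$ on $B_n(\omega_0,\delta)$ and using Lemma~\ref{gen1} to rewrite the $\omega$-derivatives, following the derivation of Lemma~\ref{lemma2.7} but without assuming the datum vanishes, produces
\begin{equation*}
2\tilde{L}^{1,k}(f^{(2)})_i + 2\omega_i\,\tilde{L}^{1,k}(f^{(2)})_0 \;=\; \Psi_i^{(k)}(t,x,\omega), \qquad i=1,\dots,n,\ k=0,1,
\end{equation*}
where $\Psi_i^{(k)}$ is an explicit expression built from $\tilde{L}^{2,k}f^{(2)}$, $\tilde{L}^{2,k+1}f^{(2)}$, their tangential gradients and their $x$-derivatives, hence fully known from the data. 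Each $\tilde{L}^{1,k}(f^{(2)})_i$ is thereby determined once $\tilde{L}^{1,k}(f^{(2)})_0$ is.

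The main step is therefore to pin down $\tilde{L}^{1,k}(f^{(2)})_0$, and here the trace-free hypothesis $j_g f^{(2)}=0$ is crucial. I would apply the tangential gradient a second time, taking the $i$-th component of $\nabla_S$ of the identity above and summing over $i=1,\dots,n$, replaying the proof of Lemma~\ref{preresult_1} without discarding the non-vanishing data terms. This yields an identity of the form
\begin{equation*}
\sum_{i=1}^n \tilde{L}^{0,k}(f^{(2)})_{ii} - \sum_{i,j=1}^n \omega_i\omega_j\,\tilde{L}^{0,k}(f^{(2)})_{ij} \;=\; (1-n)\,\tilde{L}^{1,k}(f^{(2)})_0 + \mathcal{R}_k,
\end{equation*}
where $\mathcal{R}_k$ is assembled from $\tilde{L}^{2,\cdot}f^{(2)}$ and its $x$-derivatives. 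On the other hand, expanding $\tilde{L}^{2,k}f^{(2)}=\sum_{i,j=0}^n \tilde{\omega}_i\tilde{\omega}_j \tilde{L}^{0,k}(f^{(2)})_{ij}$ with $c=1$ and using $\tilde{L}^{0,k}(f^{(2)})_{00}=\sum_{p=1}^n \tilde{L}^{0,k}(f^{(2)})_{pp}$ from $j_g f^{(2)}=0$ rewrites the left-hand side as $2\tilde{L}^{1,k}(f^{(2)})_0-\tilde{L}^{2,k}f^{(2)}$. Equating the two expressions gives
\begin{equation*}
(n+1)\,\tilde{L}^{1,k}(f^{(2)})_0 \;=\; \tilde{L}^{2,k}f^{(2)} + \mathcal{R}_k,
\end{equation*}
which is solvable since $n\geq 3$.

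With $\tilde{L}^{1,k}(f^{(2)})_i$ now determined for all $i=0,\dots,n$ and $k=0,1$, I apply Theorem~\ref{rec_roof_fn} componentwise to recover each $\widehat{(f^{(2)})_i}$ on $H_n$, completing the reconstruction. The main technical obstacle I anticipate is the bookkeeping in the second step: one must carry along all data terms from both tangential-gradient operations and verify that the two independent expansions of the same left-hand side combine into a non-degenerate linear equation for $\tilde{L}^{1,k}(f^{(2)})_0$. This is precisely where both the trace-free condition $j_g f^{(2)}=0$ and the dimensional assumption $n\geq 3$ are genuinely used, beyond their role in Theorem~\ref{rec_roof_fn}.
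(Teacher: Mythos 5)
Your proposal is correct and follows essentially the same route as the paper: one tangential gradient in $\omega$ (via Lemma~\ref{gen1}) to express $\tilde{L}^{1,k}(f^{(2)})_i$ in terms of $\tilde{L}^{1,k}(f^{(2)})_0$ plus known data, a second tangential gradient summed over $i$ combined with the trace-free identity to solve $(n+1)\tilde{L}^{1,k}(f^{(2)})_0 = \tilde{L}^{2,k}f^{(2)} + \mathcal{R}_k$, and then Theorem~\ref{rec_roof_fn} applied componentwise. The only cosmetic remark is that the coefficient $n+1$ is nonzero for any $n$, so the hypothesis $n\geq 3$ is really only consumed by the vector-field reconstruction theorem, not by the solvability of that linear equation.
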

	\begin{proof}
		Let's calculate tangential gradient of $\tilde{L}^{2,0}f^{(2)}(-,\om)$ on $B_{n}(\omega_0,\delta)$. The $i$-th component of the tangential gradient of $\Tilde{L}^{2,0}f^{(2)}$ can be written as 
		\begin{equation*}
			(\nabla_{S}\tilde{L}^{2,0}f^{(2)})_{i}
			=2\tilde{L}^{1,0}(f^{(2)})_{i}+\partial_{x_i}\tilde{L}^{2,1}f^{(2)}-\omega_{i}\sum_{j=1}^{n}\omega_{j}[2\tilde{L}^{1,0}(f^{(2)})_{j}+\partial_{x_j}\tilde{L}^{2,1}f^{(2)}], \text{ [see Lemma \ref{lemma2.7}]}.
		\end{equation*}
	We rewrite the above equation in the following way
		\begin{equation}\label{rec_1}
			\tilde{L}^{1,0}(f^{(2)})_{i}=-\omega_{i}\tilde{L}^{1,0}(f^{(2)})_{0}-\Psi_{1}^{(0),(i)},
		\end{equation}
		where $\Psi_{1}^{(0),(i)}:=\frac{1}{2}\Big[\partial_{x_i}\tilde{L}^{2,1}f^{(2)}-\omega_{i}\sum_{j}\omega_{j}\partial_{x_j}\tilde{L}^{2,1}f^{(2)}-(\nabla_{S}\tilde{L}^{2,0}f^{(2)})_{i}\Big]+\omega_i \tilde{L}^{2,0}f^{(2)}$, contains known terms only. In the notation \( \Psi_{1}^{(0),(i)} \), the superscript \((0)\) indicates that we start with zero moment, while \((i)\) specifies that we are considering the $i$-th component of the tangential gradient.
		Again take tangential gradient over $B_{n}(\omega_0,\delta)$ in both sides of \eqref{rec_1}. Considering the $i$-th component, we get the following
		\begin{equation*}
			\begin{aligned}
				&\tilde{L}^{0,0}(f^{(2)})_{ii}
				- \omega_{i} \sum_{j} \omega_{j} \tilde{L}^{0,0}(f^{(2)})_{ij}
				+ \partial_{x_i} \tilde{L}^{1,1}(f^{(2)})_{i}
				- \omega_{i} \sum_{j} \omega_{j} \partial_{x_j} \tilde{L}^{1,1}(f^{(2)})_{i} \\
				&\quad =
				- \tilde{L}^{1,0}(f^{(2)})_{0}
				- \omega_{i} \tilde{L}^{0,0}(f^{(2)})_{i0}
				+ \omega_{i}^{2} \sum_{j} \omega_{j} \tilde{L}^{0,0}(f^{(2)})_{0j} \\
				&\qquad
				- \omega_{i} \partial_{x_i} \tilde{L}^{1,1}(f^{(2)})_{0}
				+ \omega_{i}^{2} \sum_{j} \omega_{j} \partial_{x_j} \tilde{L}^{1,1}(f^{(2)})_{0}
				+ \omega_{i}^{2} \tilde{L}^{1,0}(f^{(2)})_{0}
				- (\nabla_{S} \Psi_{1}^{(0),(i)})_{i}.
			\end{aligned}
		\end{equation*}
		Note that, in the above deduction, we have used the result $\partial_{\omega_{i}}L^{1,0}f^{(2)}=L^{0,0}(f^{(2)})_{0}+\partial_{x_i}L^{1,1}f^{(2)}$.
		Taking sum over $i$ on both sides in the above equation, we get
		\begin{equation*}
			\begin{aligned}
			(1-n)\tilde{L}^{1,0}(f^{(2)})_{0}+\sum_{i}[\nabla_{S}(\Psi_{1}^{(0),(i)})]_{i}=&	\sum_{i}\tilde{L}^{0,0}(f^{(2)})_{ii}-\sum_{i,j}\omega_{i}\omega_{j}\tilde{L}^{0,0}(f^{(2)})_{ij}\\&+\sum_{i}\partial_{x_i}\tilde{L}^{1,1}(f^{(2)})_{i}-\sum_{i,j}\omega_{i}\omega_{j}\partial_{x_j}\tilde{L}^{1,1}(f^{(2)})_{i}.
			\end{aligned}
		\end{equation*}
		Now we shall use \eqref{rec_1} in the above equation to get
		\begin{equation*}
			\begin{aligned}
		&	(1-n)\tilde{L}^{1,0}(f^{(2)})_{0}+\sum_{i}[\nabla_{S}(\Psi_{1}^{(0),(i)})]_{i}\\=	&\sum_{i}\tilde{L}^{0,0}(f^{(2)})_{ii}-\sum_{i,j}\omega_{i}\omega_{j}\tilde{L}^{0,0}(f^{(2)})_{ij}+\sum_{i}\partial_{x_i}[-\omega_{i}\tilde{L}^{1,1}(f^{(2)})_{0}+\Psi_{1}^{(1),(i)}]\\&\quad-\sum_{i,j}\omega_{i}\omega_{j}\partial_{x_j}[-\omega_{i}\tilde{L}^{1,1}(f^{(2)})_{0}+\Psi_{1}^{(1),(i)}].
			\end{aligned}
		\end{equation*}
		That is
		\begin{equation*}
			\begin{aligned}
				\sum_{i}\tilde{L}^{0,0}(f^{(2)})_{ii}-\sum_{i,j}\omega_{i}\omega_{j}\tilde{L}^{0,0}(f^{(2)})_{ij}+(n-1)\tilde{L}^{1,0}(f^{(2)})_{0}\\=\sum_{i}[\nabla_{S}(\Psi_{1}^{(0),(i)})]_{i}-\sum_{i}\partial_{x_i}\Psi_{1}^{(1),(i)}+&\sum_{i,j}\omega_{i}\omega_{j}\partial_{x_j}\Psi_{1}^{(1),(i)}.
			\end{aligned}
		\end{equation*}
		The condition $j_gf^{(2)}=0$ gives us $(f^{(2)})_{00}=\sum_{p=1}^n(f^{(2)})_{pp}$. Notice that, 
        \[ \sum_{i,j=1}^n\omega_{i}\omega_{j}\tilde{L}^{0,0}(f^{(2)})_{ij}=\tilde{L}^{2,0}f^{(2)}-2\tilde{L}^{1,0}(f^{(2)})_{0}+\tilde{L}^{0,0}(f^{(2)})_{00}.\] 
        Using these two equalities, from the above equation we get 
		\begin{equation*}
			\begin{aligned}
				2\tilde{L}^{1,0}(f^{(2)})_{0}-\tilde{L}^{2,0}f^{(2)}+(n-1)\tilde{L}^{1,0}(f^{(2)})_{0}=&\sum_{i}[\nabla_{S}(\Psi_{1}^{(0),(i)})]_{i}-\sum_{i}\partial_{x_i}\Psi_{1}^{(1),(i)}+\sum_{i,j}\omega_{i}\omega_{j}\partial_{x_j}\Psi_{1}^{(1),(i)}\\
				\implies \tilde{L}^{1,0}(f^{(2)})_{0}=&\Psi_2^{(0)},
			\end{aligned}
		\end{equation*}
		where $\Psi_2^{(0)}=\frac{1}
		{(n+1)}[\sum_{i}[\nabla_{S}(\Psi_{1}^{(0),(i)})]_{i}-\sum_{i}\partial_{x_i}\Psi_{1}^{(1),(i)}+\sum_{i,j}\omega_{i}\omega_{j}\partial_{x_j}\Psi_{1}^{(1),(i)}+\tilde{L}^{2,0}f^{(2)}]$, contains known terms only. In the notation $\Psi_2^{(0)}$, the superscript $(0)$ denotes that we are working with zero moment ( $\tilde{L}^{2,0}\,)$.

		So, the knowledge of LRT of symmetric $2$-tensors is now reduced to the LRT of a vector field. 
		Similarly, if we proceed with the 1st moment, $\tilde{L}^{2,1}$, then we end up with MLRT of the $0$-th column of $f^{(2)}$.
		The relation \eqref{rec_1} helps to get $\Tilde{L}^{1,k}(f^{(2)})_{i}$ for $k=0,1$ and for any $i=1,2,\dots,n$.
		Therefore, using the reconstruction of vector fields Theorem \ref{rec_roof_fn}, we get the Fourier transform of these vector fields, $(f^{(2)})_i$ for $i=0,1,\dots,n$ in some open set in $\mathbb{R}^{1+n}$.
		As we have started with compactly supported components of $f^{(2)}$, $\widehat{f^{(2)}}$ can be extended to $\R^{1+n}$ uniquely, using analytic continuation. 
		Therefore, we can recover $f^{(2)}$ in $\mathbb{R}^{1+n}$.
	\end{proof}
	\begin{proof}[\textbf{Proof of Theorem \ref{rec_mt}}]
		We use the method of induction on $m\geq 2$, the rank of symmetric tensor fields. For $m=2$, the statement holds true [ due to Proposition \ref{rec_2t} ]. Let us assume that, the statement holds true for all symmetric tensor fields of rank $(m-1).$ 
		
		We start with $f^{(m)}\in C_{c}^{\infty}(\R^{1+n};S^m)$ for which $\tilde{L}^{m,k}f^{(m)}((t,x),\om)$ is known, $(t,x)\in \R^{1+n}$, $\om \in \Bdl$ and $k=0,\dots ,m.$ Let us take tangential gradient on $\tilde{L}^{m,k}f^{(m)}$ over the part of the sphere, $\Bdl$. We follow the exact same procedure, what we have done for $m=2$ in Proposition \ref{rec_2t}. Now if we consider the $i$-th component, then we obtain 
		\begin{equation}\label{rec_1_m}
			\tilde{L}^{m-1,k}(f^{(m)})_i=-\om_i\tilde{L}^{m-1,k}(f^{(m)})_0+\Psi_1^{(m),(k),(i)}, \text{  for } k=0,\dots,(m-1),
		\end{equation}
		where 
		\[
		\Psi_1^{(m),(k),(i)}=\frac{1}{m}\Big[\partial_{x_i}\tilde{L}^{m,k+1}f^{(m)}-\omega_{i}\sum_{j}\omega_{j}\partial_{x_j}\tilde{L}^{m,k+1}f^{(m)}-(\nabla_{S}\tilde{L}^{m,k}f^{(m)})_{i}\Big]+\omega_i \tilde{L}^{m,k}f^{(m)}.
		\]
		Again taking tangential gradient over $\Bdl$ on both sides of \eqref{rec_1_m} and summing over $i$, we conclude
		\[
		\begin{aligned}
			(m-1)\Big[\sum_{i=1}^{n}\tilde{L}^{m-2,k}(f^{(m)})_{ii}-\sum_{i,j=1}^{n}\om_i \om_j\tilde{L}^{m-2,k}(f^{(m)})_{ij}\Big] \\+\Big[\sum_{i}\partial_{x_i}\Psi_{1}^{(m),(k),(i)}-\sum_{i,j}\om_i\om_j\partial_{x_j}\Psi_{1}^{(m),(k),(i)}\Big] \\
			=(1-n)\tilde{L}^{m-1,k}(f^{(m)})_0+\sum_{i}[\nabla_{S}\Psi_{1}^{(m),(k),(i)}]_{i}.
		\end{aligned}
		\]
		Now using the condition  $Jf^{(m)}=0$, we obtain
		\[
		\begin{aligned}
			\tilde{L}^{m-1,k}(f^{(m)})_0 &=\frac{1}{2m+n-3}\Big[\sum_{i}[\nabla_{S}\Psi_{1}^{(m),(k),(i)}]_i-\sum_{i}\partial_{x_i}\Psi_{1}^{(m),(k),(i)} \\&\quad+\sum_{i,j}\om_i \om_j\partial_{x_j}\Psi_{1}^{(m),(k),(i)}\Big]=\Psi_2^{(m),(k),(i)}\text{ for } k=0,\dots,m-1.
		\end{aligned}
		\]
		Also using equation \eqref{rec_1_m}, we get $\tilde{L}^{m-1,k}(f^{(m)})_i$ for $i=1,\dots,n$. Now using induction hypothesis, we can recover $\widehat{(f^{(m)})_{i}}$ in an open set $H_n$ for $i=0,1,\dots,n.$ Therefore, we have $\widehat{f^{(m)}}$ in the open set $H_n$.
	\end{proof}
	\section*{Acknowledgments}
	    TM acknowledges UGC for the PhD research fellowship. SKS is supported by IIT Bombay seed grant (RD/0524-IRCCSH0-021) and ANRF Early Career Research  Grant (ECRG) (RD/0125-ANRF000-016). SB thanks the Department of Mathematics, IISER Bhopal for support and encouragement.

\begin{bibdiv}
\begin{biblist}

\bib{AKS_UCP}{article}{
      author={Agrawal, Divyansh},
      author={Krishnan, Venkateswaran~P.},
      author={Sahoo, Suman~Kumar},
       title={Unique continuation results for certain generalized ray transforms of symmetric tensor fields},
        date={2022},
        ISSN={1050-6926,1559-002X},
     journal={J. Geom. Anal.},
      volume={32},
      number={10},
       pages={Paper No. 245, 27},
         url={https://doi.org/10.1007/s12220-022-00981-5},
      review={\MR{4456212}},
}

\bib{Anuj_Mishra_support}{article}{
      author={Abhishek, Anuj},
      author={Mishra, Rohit~Kumar},
       title={Support theorems and an injectivity result for integral moments of a symmetric {$m$}-tensor field},
        date={2019},
        ISSN={1069-5869,1531-5851},
     journal={J. Fourier Anal. Appl.},
      volume={25},
      number={4},
       pages={1487\ndash 1512},
         url={https://doi.org/10.1007/s00041-018-09649-7},
      review={\MR{3977125}},
}

\bib{agrawal2025light}{article}{
      author={Agrawal, Divyansh},
      author={Stefanov, Plamen},
       title={The light ray transform for pseudo-{E}uclidean metrics},
        date={2025},
        ISSN={1050-6926,1559-002X},
     journal={J. Geom. Anal.},
      volume={35},
      number={9},
       pages={Paper No. 267, 22},
         url={https://doi.org/10.1007/s12220-025-02086-1},
      review={\MR{4931506}},
}

\bib{BK_polyharmonic_localdata}{article}{
      author={Bhattacharyya, Sombuddha},
      author={Kumar, Pranav},
       title={Local data inverse problem for the polyharmonic operator with anisotropic perturbations},
        date={2024},
        ISSN={0266-5611,1361-6420},
     journal={Inverse Problems},
      volume={40},
      number={5},
       pages={Paper No. 055004, 22},
         url={https://doi.org/10.1088/1361-6420/ad3164},
      review={\MR{4723844}},
}

\bib{BK_biwave}{article}{
      author={Bhattacharyya, Sombuddha},
      author={Kumar, Pranav},
       title={Direct and inverse problem for bi-wave equation with time-dependent coefficients from partial data},
        date={2025},
      eprint={2504.15911},
         url={https://arxiv.org/abs/2504.15911},
}

\bib{BKS_MRT_polyharmonic}{article}{
      author={Bhattacharyya, Sombuddha},
      author={Krishnan, Venkateswaran~P.},
      author={Sahoo, Suman~K.},
       title={Momentum ray transforms and a partial data inverse problem for a polyharmonic operator},
        date={2023},
        ISSN={0036-1410,1095-7154},
     journal={SIAM J. Math. Anal.},
      volume={55},
      number={4},
       pages={4000\ndash 4038},
         url={https://doi.org/10.1137/22M1500617},
      review={\MR{4631015}},
}

\bib{BKSU_biharmonic_nonlinear}{article}{
      author={Bhattacharyya, Sombuddha},
      author={Krupchyk, Katya},
      author={Sahoo, Suman~Kumar},
      author={Uhlmann, Gunther},
       title={Inverse problems for third-order nonlinear perturbations of biharmonic operators},
        date={2025},
        ISSN={0360-5302,1532-4133},
     journal={Comm. Partial Differential Equations},
      volume={50},
      number={3},
       pages={407\ndash 440},
         url={https://doi.org/10.1080/03605302.2024.2444972},
      review={\MR{4870992}},
}

\bib{Dairbekov_Sharafutdinov}{article}{
      author={Dairbekov, Nurlan~S.},
      author={Sharafutdinov, Vladimir~A.},
       title={Conformal {K}illing symmetric tensor fields on {R}iemannian manifolds},
        date={2010},
        ISSN={1560-750X},
     journal={Mat. Tr.},
      volume={13},
      number={1},
       pages={85\ndash 145},
         url={https://doi.org/10.3103/s1055134411010019},
      review={\MR{2682769}},
}

\bib{DKSU_invention}{article}{
      author={Dos Santos~Ferreira, David},
      author={Kenig, Carlos~E.},
      author={Salo, Mikko},
      author={Uhlmann, Gunther},
       title={Limiting {C}arleman weights and anisotropic inverse problems},
        date={2009},
        ISSN={0020-9910,1432-1297},
     journal={Invent. Math.},
      volume={178},
      number={1},
       pages={119\ndash 171},
         url={https://doi.org/10.1007/s00222-009-0196-4},
      review={\MR{2534094}},
}

\bib{feizmohammadi2021light}{article}{
      author={Feizmohammadi, Ali},
      author={Ilmavirta, Joonas},
      author={Oksanen, Lauri},
       title={The light ray transform in stationary and static Lorentzian geometries},
        date={2021},
     journal={The Journal of Geometric Analysis},
      volume={31},
      number={4},
       pages={3656\ndash 3682},
}

\bib{GU_duke}{article}{
      author={Guillemin, V.},
      author={Uhlmann, G.},
       title={Oscillatory integrals with singular symbols},
        date={1981},
        ISSN={0012-7094,1547-7398},
     journal={Duke Math. J.},
      volume={48},
      number={1},
       pages={251\ndash 267},
         url={http://projecteuclid.org/euclid.dmj/1077314493},
      review={\MR{610185}},
}

\bib{Greenleaf_Uhlmann_duke}{article}{
      author={Greenleaf, Allan},
      author={Uhlmann, Gunther},
       title={Nonlocal inversion formulas for the {X}-ray transform},
        date={1989},
        ISSN={0012-7094,1547-7398},
     journal={Duke Math. J.},
      volume={58},
      number={1},
       pages={205\ndash 240},
         url={https://doi.org/10.1215/S0012-7094-89-05811-0},
      review={\MR{1016420}},
}

\bib{Helgason}{book}{
      author={Helgason, Sigurdur},
       title={Integral geometry and {R}adon transforms},
   publisher={Springer, New York},
        date={2011},
        ISBN={978-1-4419-6054-2},
         url={https://doi.org/10.1007/978-1-4419-6055-9},
      review={\MR{2743116}},
}

\bib{IKS_Ucp}{article}{
      author={Ilmavirta, Joonas},
      author={Kow, Pu-Zhao},
      author={Sahoo, Suman~Kumar},
       title={Unique continuation for the momentum ray transform},
        date={2025},
        ISSN={1069-5869,1531-5851},
     journal={J. Fourier Anal. Appl.},
      volume={31},
      number={2},
       pages={Paper No. 17, 30},
         url={https://doi.org/10.1007/s00041-025-10149-8},
      review={\MR{4875757}},
}

\bib{Ilmavirta_JDG}{article}{
      author={Ilmavirta, Joonas},
       title={X-ray transforms in pseudo-{R}iemannian geometry},
        date={2018},
        ISSN={1050-6926,1559-002X},
     journal={J. Geom. Anal.},
      volume={28},
      number={1},
       pages={606\ndash 626},
         url={https://doi.org/10.1007/s12220-017-9834-z},
      review={\MR{3745873}},
}

\bib{Isakov_biwave_type}{article}{
      author={Isakov, Victor},
       title={A nonhyperbolic {C}auchy problem for {$\square_b\square_c$} and its applications to elasticity theory},
        date={1986},
        ISSN={0010-3640,1097-0312},
     journal={Comm. Pure Appl. Math.},
      volume={39},
      number={6},
       pages={747\ndash 767},
         url={https://doi.org/10.1002/cpa.3160390603},
      review={\MR{859272}},
}

\bib{JKKS_I}{article}{
      author={Jathar, Shubham~R.},
      author={Kar, Manas},
      author={Krishnan, Venkateswaran~P.},
      author={Sharafutdinov, Vladimir~A.},
       title={Normal operators for momentum ray transforms, {I}: {T}he inversion formula},
        date={2024},
        ISSN={1069-5869,1531-5851},
     journal={J. Fourier Anal. Appl.},
      volume={30},
      number={5},
       pages={Paper No. 58, 39},
         url={https://doi.org/10.1007/s00041-024-10113-y},
      review={\MR{4803444}},
}

\bib{JKKS_II}{article}{
      author={Jathar, Shubham~R.},
      author={Kar, Manas},
      author={Krishnan, Venkateswaran~P.},
      author={Sharafutdinov, Vladimir~A.},
       title={Normal operators for momentum ray transforms, {II}: {S}aint-{V}enant operators},
        date={2025},
        ISSN={1813-3304},
     journal={Sib. \`Elektron. Mat. Izv.},
      volume={22},
      number={1},
       pages={650\ndash 661},
      review={\MR{4932418}},
}

\bib{KJKP_fractional_mrt}{article}{
      author={Kar, Manas},
      author={Jathar, Shubham~Ramkisan},
      author={Krishnan, Venky},
      author={Pattar, Rahul~Raju},
       title={Weighted divergent beam transform: Reconstruction, unique continuation and stability},
        date={2025},
     journal={Inverse Problems},
         url={http://iopscience.iop.org/article/10.1088/1361-6420/adffaf},
}

\bib{krishnan2018momentum}{article}{
      author={Krishnan, Venkateswaran~P.},
      author={Manna, Ramesh},
      author={Sahoo, Suman~Kumar},
      author={Sharafutdinov, Vladimir~A.},
       title={Momentum ray transforms},
        date={2019},
        ISSN={1930-8337,1930-8345},
     journal={Inverse Probl. Imaging},
      volume={13},
      number={3},
       pages={679\ndash 701},
         url={https://doi.org/10.3934/ipi.2019031},
      review={\MR{3959332}},
}

\bib{krishnan2020momentum}{article}{
      author={Krishnan, Venkateswaran~P.},
      author={Manna, Ramesh},
      author={Sahoo, Suman~Kumar},
      author={Sharafutdinov, Vladimir~A.},
       title={Momentum ray transforms, {II}: range characterization in the {S}chwartz space},
        date={2020},
        ISSN={0266-5611,1361-6420},
     journal={Inverse Problems},
      volume={36},
      number={4},
       pages={045009, 33},
         url={https://doi.org/10.1088/1361-6420/ab6a65},
      review={\MR{4075194}},
}

\bib{Krishnan_Sahoo_IPMS}{incollection}{
      author={Krishnan, Venkateswaran~P.},
      author={Sahoo, Suman~Kumar},
       title={Applications of {M}omentum {R}ay {T}ransforms in {I}nverse {P}roblems},
        date={2025},
   booktitle={Inverse {P}roblems: {M}odelling and {S}imulation},
      series={Trends Math.},
      volume={11},
   publisher={Birkh\"auser/Springer, Cham},
       pages={65\ndash 71},
         url={https://doi.org/10.1007/978-3-031-87213-6_9},
      review={\MR{4952805}},
}

\bib{KSU_Maxwell}{article}{
      author={Kenig, Carlos~E.},
      author={Salo, Mikko},
      author={Uhlmann, Gunther},
       title={Inverse problems for the anisotropic {M}axwell equations},
        date={2011},
        ISSN={0012-7094,1547-7398},
     journal={Duke Math. J.},
      volume={157},
      number={2},
       pages={369\ndash 419},
         url={https://doi.org/10.1215/00127094-1272903},
      review={\MR{2783934}},
}

\bib{krishnan2020uniqueness}{article}{
      author={Krishnan, Venkateswaran~P},
      author={Senapati, Soumen},
      author={Vashisth, Manmohan},
       title={A uniqueness result for light ray transform on symmetric 2-tensor fields},
        date={2020},
     journal={Journal of Fourier Analysis and Applications},
      volume={26},
      number={3},
       pages={40},
}

\bib{krishnan2020inverse}{article}{
      author={Krishnan, Venkateswaran~P},
      author={Vashisth, Manmohan},
       title={An inverse problem for the relativistic {S}chr{\"o}dinger equation with partial boundary data},
        date={2020},
     journal={Applicable Analysis},
      volume={99},
      number={11},
       pages={1889\ndash 1909},
}

\bib{LLSU_lrt_Lorentzian}{article}{
      author={Lassas, Matti},
      author={Oksanen, Lauri},
      author={Stefanov, Plamen},
      author={Uhlmann, Gunther},
       title={The light ray transform on {L}orentzian manifolds},
        date={2020},
        ISSN={0010-3616,1432-0916},
     journal={Comm. Math. Phys.},
      volume={377},
      number={2},
       pages={1349\ndash 1379},
         url={https://doi.org/10.1007/s00220-020-03703-6},
      review={\MR{4115019}},
}

\bib{MS_Siam}{article}{
      author={Mishra, Rohit~Kumar},
      author={Sahoo, Suman~Kumar},
       title={Injectivity and range description of integral moment transforms over {$m$}-tensor fields in {$\mathbb{R}^n$}},
        date={2021},
        ISSN={0036-1410,1095-7154},
     journal={SIAM J. Math. Anal.},
      volume={53},
      number={1},
       pages={253\ndash 278},
         url={https://doi.org/10.1137/20M1347589},
      review={\MR{4198570}},
}

\bib{Mishra_Sahoo_PAMS}{article}{
      author={Mishra, Rohit~Kumar},
      author={Sahoo, Suman~Kumar},
       title={The generalized {S}aint {V}enant operator and integral moment transforms},
        date={2023},
        ISSN={0002-9939,1088-6826},
     journal={Proc. Amer. Math. Soc.},
      volume={151},
      number={1},
       pages={189\ndash 199},
         url={https://doi.org/10.1090/proc/16080},
      review={\MR{4504618}},
}

\bib{Marco_Salo_Tzou}{misc}{
      author={Mazzucchelli, Marco},
      author={Salo, Mikko},
      author={Tzou, Leo},
       title={A general support theorem for analytic double fibration transforms},
        date={2023},
         url={https://arxiv.org/abs/2306.05906},
}

\bib{OSSU_real}{article}{
      author={Oksanen, Lauri},
      author={Salo, Mikko},
      author={Stefanov, Plamen},
      author={Uhlmann, Gunther},
       title={Inverse problems for real principal type operators},
        date={2024},
        ISSN={0002-9327,1080-6377},
     journal={Amer. J. Math.},
      volume={146},
      number={1},
       pages={161\ndash 240},
         url={https://doi.org/10.1353/ajm.2024.a917541},
      review={\MR{4691487}},
}

\bib{pressley2010elementary}{book}{
      author={Pressley, Andrew~N},
       title={Elementary differential geometry},
   publisher={Springer Science \& Business Media},
        date={2010},
}

\bib{Paternain_Salo_Uhlmann_2023}{book}{
      author={Paternain, Gabriel~P.},
      author={Salo, Mikko},
      author={Uhlmann, Gunther},
       title={Geometric inverse problems: With emphasis on two dimensions},
      series={Cambridge Studies in Advanced Mathematics},
   publisher={Cambridge University Press},
        date={2023},
}

\bib{siamak_vector_field_lrt}{article}{
      author={RabieniaHaratbar, Siamak},
       title={Support theorem for the light-ray transform of vector fields on {M}inkowski spaces},
        date={2018},
        ISSN={1930-8337,1930-8345},
     journal={Inverse Probl. Imaging},
      volume={12},
      number={2},
       pages={293\ndash 314},
         url={https://doi.org/10.3934/ipi.2018013},
      review={\MR{3810159}},
}

\bib{Radon}{article}{
      author={Radon, I.},
       title={On boundary-value problems for the logarithmic potential},
        date={1946},
     journal={Uspehi Matem. Nauk (N.S.)},
      volume={1},
      number={3-4(13-14)},
       pages={96\ndash 124},
      review={\MR{25031}},
}

\bib{ramm1991inverse}{article}{
      author={Ramm, Alexander~G},
      author={Sj{\"o}strand, Johannes},
       title={An inverse problem of the wave equation},
        date={1991},
     journal={Mathematische Zeitschrift},
      volume={206},
       pages={119\ndash 130},
}

\bib{Sharafutdinov_1986_momentum}{article}{
      author={Sharafutdinov, Vladimir~A.},
       title={A problem of integral geometry for generalized tensor fields on {${\bf R}^n$}},
        date={1986},
        ISSN={0002-3264},
     journal={Dokl. Akad. Nauk SSSR},
      volume={286},
      number={2},
       pages={305\ndash 307},
      review={\MR{823390}},
}

\bib{Sharafutdinov_book}{book}{
      author={Sharafutdinov, Vladimir~A.},
       title={Integral geometry of tensor fields},
      series={Inverse and Ill-posed Problems Series},
   publisher={VSP, Utrecht},
        date={1994},
        ISBN={90-6764-165-0},
         url={https://doi.org/10.1515/9783110900095},
      review={\MR{1374572}},
}

\bib{SS_linearized}{article}{
      author={Sahoo, Suman~Kumar},
      author={Salo, Mikko},
       title={The linearized {C}alder\'on problem for polyharmonic operators},
        date={2023},
        ISSN={0022-0396,1090-2732},
     journal={J. Differential Equations},
      volume={360},
       pages={407\ndash 451},
         url={https://doi.org/10.1016/j.jde.2023.03.017},
      review={\MR{4562046}},
}

\bib{stefanov2017support}{article}{
      author={Stefanov, Plamen},
       title={Support theorems for the light ray transform on analytic {L}orentzian manifolds},
        date={2017},
     journal={Proceedings of the American Mathematical Society},
      volume={145},
      number={3},
       pages={1259\ndash 1274},
}

\bib{stefanov1989inverse}{article}{
      author={Stefanov, Plamen~D},
       title={Inverse scattering problem for the wave equation with time-dependent potential},
        date={1989},
     journal={Journal of mathematical analysis and applications},
      volume={140},
      number={2},
       pages={351\ndash 362},
}

\bib{Syl_Uhl}{article}{
      author={Sylvester, John},
      author={Uhlmann, Gunther},
       title={A global uniqueness theorem for an inverse boundary value problem},
        date={1987},
        ISSN={0003-486X,1939-8980},
     journal={Ann. of Math. (2)},
      volume={125},
      number={1},
       pages={153\ndash 169},
         url={https://doi.org/10.2307/1971291},
      review={\MR{873380}},
}

\bib{Plamen_Yang}{article}{
      author={Stefanov, Plamen},
      author={Yang, Yang},
       title={The inverse problem for the {D}irichlet-to-{N}eumann map on {L}orentzian manifolds},
        date={2018},
        ISSN={2157-5045,1948-206X},
     journal={Anal. PDE},
      volume={11},
      number={6},
       pages={1381\ndash 1414},
         url={https://doi.org/10.2140/apde.2018.11.1381},
      review={\MR{3803714}},
}

\bib{Uhlmann_survey}{incollection}{
      author={Uhlmann, Gunther},
       title={30 years of {C}alder\'{o}n's problem},
        date={2014},
   booktitle={S\'{e}minaire {L}aurent {S}chwartz---\'{E}quations aux d\'{e}riv\'{e}es partielles et applications. {A}nn\'{e}e 2012--2013},
      series={S\'{e}min. \'{E}qu. D\'{e}riv. Partielles},
   publisher={\'{E}cole Polytech., Palaiseau},
       pages={Exp. No. XIII, 25},
      review={\MR{3381003}},
}

\bib{Yiran_wang_lrt}{article}{
      author={Wang, Yiran},
       title={Parametrices for the light ray transform on {M}inkowski spacetime},
        date={2018},
        ISSN={1930-8337,1930-8345},
     journal={Inverse Probl. Imaging},
      volume={12},
      number={1},
       pages={229\ndash 237},
         url={https://doi.org/10.3934/ipi.2018009},
      review={\MR{3810155}},
}

\bib{Yiran_integral_geo_wave_equ}{article}{
      author={Wang, Yiran},
       title={Some integral geometry problems for wave equations},
        date={2022},
        ISSN={0266-5611,1361-6420},
     journal={Inverse Problems},
      volume={38},
      number={8},
       pages={Paper No. 084001, 22},
         url={https://doi.org/10.1088/1361-6420/ac77b1},
      review={\MR{4452354}},
}

\bib{Yiran_cosmology_tom}{article}{
      author={Wang, Yiran},
       title={Inverse problems in cosmological {X}-ray tomography},
        date={2024},
      volume={30},
       pages={139\ndash 165},
      review={\MR{4815454}},
}

\end{biblist}
\end{bibdiv}
\end{document}